\numberwithin{equation}{section}
\newtheorem{proposition}{Proposition}[section]
\newtheorem{theorem}[proposition]{Theorem}
\newtheorem{lemma}[proposition]{Lemma}
\newtheorem{remark}[proposition]{Remark}
\newtheorem{definition}[proposition]{Definition}
\newcommand{\ttt}{\theta}
\newcommand{\lam}{\lambda}
\newcommand{\xg}{\backslash}
\newcommand{\wq}{\infty}
\newcommand{\te}{\text}
\newcommand{\rr}{\mathbb{R}}
\newcommand{\pa}{\partial}
\newcommand{\p}{\phi}
\newcommand{\pp}{\varphi}
\newcommand{\PP}{\varPhi}
\newcommand{\PPP}{\varPsi}
\newcommand{\es}{\varepsilon}
\newcommand{\dd}{\Delta}
\newcommand{\de}{\delta}
\newcommand{\na}{\nabla}
\newcommand{\al}{\alpha}
\newcommand{\bb}{\beta}
\newcommand{\ga}{\gamma}
\newcommand{\gaa}{\Gamma}
\newcommand{\ooo}{\Omega}
\newcommand{\si}{\sigma}
\newcommand{\z}{\left}
\newcommand{\x}{\right}
\newcommand{\ka}{\kappa}
\begin{document}
\title[The Neumann problem for sum Hessian equations]{Neumann Problems for Elliptic and Parabolic Sum Hessian Equations}
\author[Weizhao Liang]{Weizhao Liang}
\address{School of Mathematical Sciences, University of Science and Technology of China, Hefei, China.}
\email{Lwz740@mail.ustc.edu.cn}
\author[Jin Yan]{Jin Yan}
\address{School of Mathematical Sciences, University of Science and Technology of China, Hefei, China.}
\email{yjoracle@mail.ustc.edu.cn}
\author[Hua Zhu]{Hua Zhu}
\address{School of Mathematical and Physics, Southwest University of Science and Technology, Mianyang, 621010, Sichuan Province, China.}
\email{zhuhmaths@mail.ustc.edu.cn}

\begin{abstract}
    This paper studies the Neumann boundary value problem for sum Hessian equations. We first derive a priori $C^2$ estimates for $(k-1)$-admissible solutions in almost convex and uniformly $(k-1)$-convex domains, and prove the existence of admissible solutions via the method of continuity. Furthermore, we obtain existence results for the classical Neumann problem in uniformly convex domains. Finally, we extend these results to the corresponding parabolic problems.
    %we extend these results to the classical Neumann problem and its parabolic counterparts, deriving analogous a priori estimates and existence theorems in uniformly convex domains.%Moreover, we extend these results to the classical Neumann problem and its parabolic counterparts, deriving the corresponding a priori estimates and existence theorems in uniformly convex domain.
\end{abstract}

\keywords{sum Hessian equation, Neumann problem, $(k-1)$-admissible solution, a priori $C^2$ estimates, Existence}

\maketitle

\section{Introduction}\label{sec 1-intro}
\setcounter{equation}{0}

In this paper, we study the following Neumann problem for the sum Hessian equation:
\begin{equation}\label{eq 1.1}
    \left\{\begin{aligned}
    &\sigma_{k}(\lam(D^2u))+\al \sigma_{k-1}(\lam(D^2u))  =f(x) &&  \text {in } \Omega, \\
    &u_\nu  =\pp(x,u) && \text {on } \pa \ooo,
    \end{aligned}\right.
\end{equation}
where  $n\geq k\geq 2$, $\al>0$, and $\ooo\subset \rr^n$ is a bounded domain. Here, $f$ is a positive function in $\overline{\ooo}$, and $\nu$ denotes the outer unit normal vector of $\pa \ooo$. For any $1\leq k\leq n$, the $k$-th elementary symmetric polynomial of the eigenvalues $\lam(D^2 u)=(\lam_1,\cdots, \lam_n)$ of $D^2 u$ is given by
$$
\si_k(D^2 u) = \si_k(\lam(D^2 u))=\sum_{1\leq i_1<\cdots<i_k\leq n} \lam_{i_1} \cdots \lam_{i_k}.
$$
The $k$-Hessian equation $\sigma_k(D^2u)=f(x,u,Du)$ is fundamental in fully nonlinear PDE theory and geometric analysis, with deep connections to various geometric problems (see, e.g., \cite{BK,CNS1,CNS2,CNS3,CNS4,CY,GLL,GLM,O,P3,TW}). 

The Neumann and oblique derivative problems for partial differential equations have been extensively studied in the literature.
When $\al=0$, equation \eqref{eq 1.1} reduces to the Neumann problem for the $k$-Hessian equation. The case $k=1$ (the Laplace equation) is classical, and we refer to \cite{GT} for a priori estimates and the existence results. For the Monge-Amp\` ere case ($k=n$), Lions et al. \cite{LTU} first established the solvability of the Neumann problem. The intermediate cases $2\leq k\leq n-1$ present more complex challenges. Trudinger \cite{Tru-1987} solved this problem when the domain is a ball and he conjectured that similar results should hold for general uniformly convex domains. This conjecture was later confirmed by Ma and Qiu \cite{Ma-Qiu-2019-CPM}. Further developments include  the work of Chen and Zhang \cite{Chen-Zhang-Bull-2021}, who solved the Neumann problem for Hessian quotient equations. 

Chen et al. \cite{Chen-Chen-Mei-Xiang-2022-real-mixed} studied the mixed Hessian equation $\si_{k}(D^2 u) +\sum_{l=1}^{k-1} \al_l(x)\si_l(D^2 u)=f(x)$. For another important mixed Hessian equation, the special Lagrangian-type equation $\sum_i \arctan\frac{\lam_i(D^2 u)}{f(x)}=\Theta$, Chen et al. \cite{chen-ma-wei-2019-real-JDE,chen-ma-wei-2019-complex} considered the case $f\equiv 1$, with the general case $f>0$ later treated in \cite{Qiu-Zhang-arXiv-2024}.
We should emphasize that the mixed Hessian equation in \cite{Chen-Chen-Mei-Xiang-2022-real-mixed}, with coefficient satisfying $\al_{k-1}\leq 0$ and $\al_l<0$ for $1 \leq l\leq k-2$, presents a fundamentally different structure from our equation \eqref{eq 1.1}, where the key parameter $\al$ is positive. Moreover, their approach converts the equation to a Hessian quotient form by division with $\si_{k-1}$, and hence can not deal with the case $f=f(x,u)$ in second-order derivative estimates.

For the classical Neumann problem in a uniformly convex domain, Lions et al. \cite{LTU} studied the case $k=n$, while Qiu and Xia \cite{Qiu-Xia-IMRN-2019} generalized these results to arbitrary $k$-Hessian equations. More results can be found in \cite{Chen-Chen-Mei-Xiang-2022-real-mixed,Chen-Chen-Mei-Xiang-2022-complex-mixed,chen-ma-zhang-2021,Chen-Zhang-Bull-2021}. 

The study of oblique derivative problems for $k$-Hessian equations has seen significant developments through several key contributions. The special case when $k=n=2$ was resolved by Wang \cite{Wang-1992-Oblique}, while Urbas \cite{Urbas-1998} later established existence results for the Monge-Amp\` ere equations in general dimensions. Building on these foundations, Jiang and Trudinger in \cite{Jiang-Trudinger-I-2018,Jiang-Trudinger-II-2017,Jiang-Trudinger-III-2019} obtained existence theorems for certain augmented Hessian equations with oblique boundary conditions. In \cite{Wang Peihe-2022}, Wang derived global gradient estimates for admissible solutions to Hessian equations. For additional results on Neumann and oblique derivative problems for linear and quasilinear elliptic equations, we refer to Lieberman's books \cite{Lieberman-Book-2013}.

Equation \eqref{eq 1.1} provides a natural generalization of the $k$-Hessian equation through a linear combination of $k$-Hessian operators. This class of equations has profound geometric applications and has been widely studied in various contexts.  In particular, Harvey and Lawson \cite{HL80} studied such operators in their work on special Lagrangian equations. The concavity properties of these operators enabled Krylov \cite{Kry} and Dong \cite{Dong} to establish important curvature estimates. Guan and Zhang \cite{GZ} derived curvature estimates for related equations where the right-hand side does not depend on the gradient but involves coefficients tied to hypersurface positions. Additionally, certain problems in hyperbolic geometry can also be formulated as equations of this type \cite{EGM}.

We are particularly interested in the existence of admissible solutions to the Neumann problem \eqref{eq 1.1}. In this paper, we establish existence results for such solutions in almost convex and uniformly $(k-1)$-convex domains by deriving a priori $C^2$ estimates. Moreover, we extend these results to the classical Neumann problem for sum Hessian equations and their parabolic versions in uniformly convex domains.

We begin by defining $(k-1)$-admissible solutions and admissible sets for the sum Hessian equation, along with the required convexity conditions for the domain $\ooo$.

\begin{definition}\label{def 1.1}
    Let $\ooo \subset \mathbb{R}^n$ be a bounded $C^2$ domain and $u\in C^2(\ooo)$. 
    \par
    $(1)$ 
    A function $u$ is called $k$-convex if the eigenvalues of $D^2 u$ satisfy
    $$\lambda(D^2 u(x)) = (\lambda_1(x), \cdots, \lambda_n(x)) \in \gaa_k, \quad \te{for any } x\in \ooo,
    $$ 
    where $\Gamma_k$, the G\aa rding's cone, is defined as $\Gamma_k = \{ \lambda \in \mathbb{R}^n \mid \sigma_i(\lambda) > 0, \, i = 1, \dots, k \}$.
    \par
    $(2)$ 
    For fixed $\al >0$. A function $u$ is called $(k-1)$-admissible if $\lam(D^2 u(x)) \in \tilde{\gaa}_k$ for all $x\in\ooo$, where $\tilde{\gaa}_{k}$ is defined as
    \begin{equation*}
        \tilde{\Gamma}_k=\Gamma_{k-1}\cap\{\lambda \in \rr^n \mid \sigma_k(\lambda)+\alpha\sigma_{k-1}(\lambda)>0\}.
    \end{equation*}
    Clearly, $\gaa_{k}\subseteq \tilde{\gaa}_k\subseteq \gaa_{k-1}$. By Lemma \textup{\ref{lem 2.2}} $(a)$, $\tilde\Gamma_k$ is a convex set, and the operator $\si_k +\al \si_{k-1}$ is elliptic in $\tilde{\gaa}_k$. 
    \par
    $(3)$ 
    We say that $\ooo$ is almost convex if there exists a small negative constant $a_\ka$ such that 
    $$
    \ka_i\geq a_\ka,\quad\te{on } \pa \ooo,  \te{ for any }  1\leq i\leq n-1,
    $$ 
    where $\ka=(\ka_1,\cdots , \ka_{n-1})$ denotes the principal curvatures of $\pa \ooo$ with respect to its inner normal. Similarly,  
    $\ooo$ is uniformly $(k-1)$-convex if $\ka \in \gaa_{k-1}$ and $\si_{k-1}(\ka)\geq c_{\ka}$ on $\pa \ooo$ for some positive constant $c_{\ka}$.
    $\ooo$ is uniformly convex if $\min_{i}\ka_i\geq \ga_\ka$ on $\pa \ooo$ for some positive constant $\ga_{\ka}$.
\end{definition}

Throughout the paper, unless explicitly stated otherwise, $\ooo$ is a bounded domain and $\nu$ denotes the unit outer normal vector field of $\pa \ooo$. We now state our first main result.

\begin{theorem}\label{thm 1.2-Neumann for elliptic sum}
    Let $\ooo \subset \rr^n$ be a $C^4$ almost convex and uniformly $(k-1)$-convex domain, $f \in C^{2}(\overline{\ooo})$, $\pp \in C^{3}(\pa \ooo\times\rr)$ satisfy $f>0$, $\pp_u\leq c_\pp<0$, and $2a_{\ka}>c_{\pp}$. Then there exists a unique $(k-1)$-admissible solution $u\in C^{3,\ga}(\overline{\ooo})$ to the Neumann problem
    \begin{equation}\label{eq-Neumann for elliptic sum (thm 1.2)}
        \left\{\begin{aligned}
            & \si_k(D^2 u)+\al \si_{k-1}(D^2 u )=f(x) && \te{in } \ooo, \\
            & u_\nu =\pp(x,u) && \te{on } \pa \ooo, 
        \end{aligned}\right.
    \end{equation} 
    where $\ga\in (0,1)$ is a constant.
\end{theorem}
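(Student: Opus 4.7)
My plan is to prove Theorem~\ref{thm 1.2-Neumann for elliptic sum} by the method of continuity: embed \eqref{eq-Neumann for elliptic sum (thm 1.2)} in a one-parameter family joining a trivially solvable Neumann problem at $t=0$ to the target equation at $t=1$, and show that the subset of $t\in[0,1]$ admitting a $(k-1)$-admissible $C^{3,\ga}$ solution is non-empty, open and closed, so existence at $t=1$ follows. Openness is supplied by the implicit function theorem applied to the linearisation, which is strictly elliptic on $\tilde{\gaa}_k$ (Lemma~\ref{lem 2.2}) and oblique with strict sign $\pp_u\le c_\pp<0$; the same observation yields uniqueness, since the difference of two solutions satisfies a linear elliptic equation with an oblique boundary condition of strictly negative sign, so the classical maximum principle forces it to vanish. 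Closedness reduces to proving $t$-uniform a priori $C^{2,\ga}$ bounds; this is the substantive content.

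\textbf{$C^0$ and $C^1$ estimates.} The $C^0$ bound follows from the sign structure of the problem: at an interior extremum $\lam(D^2u)$ fails to lie in $\tilde{\gaa}_k$ while $f>0$ on $\overline{\ooo}$, so $u$ attains its extrema on $\pa\ooo$; evaluating $u_\nu=\pp(x,u)$ there and using $\pp_u\le c_\pp<0$ pins $u$ on both sides. For the gradient estimate I would take the standard auxiliary functions $w=|\na u|^2 e^{h(u)}$ in the interior and the boundary variant $\vartheta|\na u|^2+\theta(u_\nu-\pp(x,u))+Ad(x)$ near $\pa\ooo$, where $d$ denotes the distance to $\pa\ooo$; the concavity of $(\si_k+\al \si_{k-1})^{1/k}$ on $\tilde{\gaa}_k$ supplies the good terms needed to close the computation at either an interior or a boundary maximum.

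\textbf{$C^2$ estimates — the main obstacle.} The core difficulty is the boundary bound on the pure Hessian, and specifically the double normal $u_{\nu\nu}$. Once $|D^2u|$ is controlled on $\pa\ooo$, the interior bound is standard: maximise $W(x,\xii)=u_{\xii\xii}+\tau|\na u|^2$ on $\overline{\ooo}\times S^{n-1}$, differentiate the equation twice at an interior maximum, and exploit concavity of the operator to force a contradiction, so that $W$ is controlled by its boundary trace. On $\pa\ooo$, the tangential-tangential derivatives come from differentiating $u_\nu=\pp(x,u)$ twice along tangential fields, a computation in which the principal curvatures $\ka_i$ appear with a sign that accommodates $a_\ka<0$ because of the strict sign of $\pp_u$. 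Mixed tangential-normal derivatives are produced by a Lions-Trudinger-Urbas barrier, assembled from tangential derivatives of the boundary data and a term $Ad-Bd^2$; its sub-/supersolution property rests on strict ellipticity in $\tilde{\gaa}_k$ and the already established $C^1$ bound. The double-normal bound is the genuine obstacle: following Ma-Qiu \cite{Ma-Qiu-2019-CPM} and Qiu-Xia \cite{Qiu-Xia-IMRN-2019}, I would maximise on a boundary neighbourhood the auxiliary function
\[
\PP(x)=\log\!\z(u_\nu(x)-\pp(x,u)\x)+P(|\na u|^2)+Q(u)+Kd(x),
\]
with suitable increasing $P,Q$ and large $K$; analysing the sign of $L\PP$ for $L=F^{ij}\pa_{ij}$ (where $F=\si_k+\al \si_{k-1}$), a boundary quadratic form involving $2\ka+\pp_u$ is produced on differentiating the oblique condition, and the hypothesis $2a_\ka>c_\pp$ is precisely what renders this form sign-favourable up to controllable perturbations. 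The uniform $(k-1)$-convexity $\si_{k-1}(\ka)\ge c_\ka$ delivers a matching lower bound on $F^{ij}\ka_{ij}$ in the same computation, while a crude upper barrier for $u_{\nu\nu}$ follows from $\si_{k-1}(\lam(D^2u))>0$ on $(k-1)$-admissible functions, thus closing the loop.

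\textbf{Conclusion.} With $\|u\|_{C^2(\overline\ooo)}\le C$ in hand and $\lam(D^2u)$ trapped in a compact subset of $\tilde{\gaa}_k$, the equation becomes uniformly elliptic with a concave principal part; the Krylov-Evans theorem together with the oblique-boundary regularity of Lieberman \cite{Lieberman-Book-2013} upgrades the bound to uniform $C^{2,\ga}$, and Schauder estimates promote this to $C^{3,\ga}$, closing the continuity argument and producing the desired $(k-1)$-admissible solution. The genuinely new ingredient compared with the pure $k$-Hessian case $\al=0$ is that every test-function computation must absorb the lower-order contribution $\al \si_{k-1}$ without losing the good sign, and the sharpest pressure point is the boundary auxiliary function above, where the structural hypothesis $2a_\ka>c_\pp$ is converted into a usable positive quantity.
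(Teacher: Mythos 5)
Your overall framework coincides with the paper's: reduce existence to $t$-uniform a priori $C^{2,\gamma}$ bounds and run the method of continuity, with the crux being the boundary double normal estimate where $2a_\ka>c_\pp$ and the uniform $(k-1)$-convexity are used, and uniqueness from the maximum principle and Hopf's lemma. However, several steps as you state them do not work. First, the $C^0$ bound: admissibility only forces the \emph{maximum} of $u$ to $\pa\ooo$ (since $\Delta u>0$); an interior minimum is perfectly compatible with $\lam(D^2u)\in\tilde{\gaa}_k$ and with $f>0$ (e.g. $u=A|x|^2$), so your claim that both extrema are pushed to the boundary fails. The paper (Theorem \ref{thm elliptic C^0}) handles the lower bound by comparing with $v=A|x|^2$, so that $u-v$ attains its minimum on $\pa\ooo$, and only then uses $\pp_u\le c_\pp<0$.

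Second, and more seriously, your double normal estimate is not viable as proposed. The function $\log\bigl(u_\nu-\pp(x,u)\bigr)$ cannot be maximized on a boundary neighbourhood: the natural extension $Du\cdot(-Dd)-\pp(x,u)$ vanishes identically on $\pa\ooo$ and has no sign in $\ooo_\mu$, so its logarithm is undefined precisely where the information is needed, and an interior maximum of such a quantity yields no control of $u_{\nu\nu}$ at the boundary extremal point. What the paper actually does (Lemmas \ref{lem elliptic u_nu nu leq C} and \ref{lem elliptic u_nu nu geq -C}, following Ma--Qiu and Chen--Zhang) is to use the linear-in-$(u_d-\pp)$ barriers $P=(1+\beta d)\bigl[Du\cdot(-Dd)-\pp(x,u)\bigr]\mp\bigl(A+\tfrac{M}{2}\bigr)h$, with $h=-d+Kd^2$ chosen via Lemma \ref{lem h=-d+Kd^2} so that $S_k^{ij}h_{ij}\ge c_1\sum_i S_k^{ii}$ (this is exactly where uniform $(k-1)$-convexity enters), prove the extremum of $P$ occurs only on $\pa\ooo$, and then differentiate in the normal direction at the extremal point of $u_{\nu\nu}$. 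Moreover, your assertion that an upper bound for $u_{\nu\nu}$ is "crude" and follows from $\si_{k-1}(\lam)>0$ is false: membership in $\gaa_{k-1}$ places no upper bound on the largest eigenvalue, and in the paper the upper bound is in fact the delicate half, requiring the new inequality $S_k^{11}\ge\ttt_1 S_k^{nn}$ of Lemma \ref{lem 2.5} together with a case analysis designed to cope with the inhomogeneity of $S_k=\si_k+\al\si_{k-1}$ -- the very point where the sum Hessian differs from the pure $k$-Hessian -- and your sketch never addresses it. (Your gradient estimate is also too optimistic: concavity of $S_k^{1/k}$ alone does not close it; the paper relies on the Chou--Wang negative-eigenvalue trick through Lemma \ref{lem 2.4} and, near the boundary, on Wang's tangential-gradient function, the latter being needed because $\pp$ depends on $u$.)
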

\begin{remark}
     It is well known that the Dirichlet problem for the k-Hessian equation was solved in uniformly $(k-1)$-convex domains, as demonstrated by Caffarelli et al. \textup{\cite{CNS3}}. 
    However, for the Neumann problem , the domain $\ooo$ is typically required to be uniformly convex, see Trudinger \textup{\cite{Tru-1987}} for balls, Ma and Qiu \textup{\cite{Ma-Qiu-2019-CPM}} for the uniformly convex doamins, and Chen and Zhang \textup{\cite{Chen-Zhang-Bull-2021}} for convex $($i.e., $\min_i \ka_i \geq 0$$)$ and uniformly $(k-1)$-convex assumptions. We should mention that our almost convexity assumption is just a trivial consequence in the calculation, and the condition is far from being relaxed to uniform $(k-1)$- convexity. 
\end{remark}

Following the approach of \cite{Chen-Zhang-Bull-2021,LTU,Qiu-Xia-IMRN-2019}, we establish an existence theorem for the classical Neumann problem of sum Hessian equations.
\begin{theorem}\label{thm 1.3-classical Neumann for elliptic sum}
    Let $\ooo \subset \rr^n$ be a $C^4$ uniformly convex domain, $f\in C^{2}(\overline{\ooo} )$ satisfy $f>0$ and $\pp \in C^{3}(\pa \ooo)$. Then there exists a unique constant $s$ and a $(k-1)$-admissible solution $u\in C^{3,\ga}(\overline{\ooo})$ solving the classical Neumann problem
    \begin{equation}\label{eq-classical Neumann for elliptic sum (thm 1.3)}
        \left\{\begin{aligned}
            & \si_k(D^2 u)+\al \si_{k-1}(D^2 u )=f(x) && \te{in } \ooo, \\
            & u_\nu =s+\pp(x) && \te{on } \pa \ooo. 
        \end{aligned}\right.
    \end{equation}
    The function $u$ is unique up to a constant.
\end{theorem}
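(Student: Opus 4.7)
The plan is to follow the $\epsilon$-regularization approach pioneered by Lions--Trudinger--Urbas \cite{LTU} and adapted to the $k$-Hessian case by Qiu--Xia \cite{Qiu-Xia-IMRN-2019}. Since the boundary datum $s+\varphi(x)$ is independent of $u$, Theorem~\ref{thm 1.2-Neumann for elliptic sum} does not apply directly (the strict sign condition $\varphi_u \leq c_\varphi < 0$ fails). To remedy this, for each small $\epsilon>0$ I consider the auxiliary Neumann problem
\begin{equation*}
    \left\{\begin{aligned}
        &\si_k(D^2 u^\epsilon)+\al\si_{k-1}(D^2 u^\epsilon)=f(x) &&\te{in }\ooo,\\
        &u^\epsilon_\nu = -\epsilon u^\epsilon + \pp(x) &&\te{on }\pa\ooo,
    \end{aligned}\right.
\end{equation*}
whose boundary function $\tilde\pp(x,u)=\pp(x)-\epsilon u$ has $\tilde\pp_u=-\epsilon<0$. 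Since $\ooo$ is uniformly convex, it is uniformly $(k-1)$-convex, and we can take $a_\ka=\ga_\ka>0$, so $2a_\ka>-\epsilon=c_{\tilde\pp}$ is trivially satisfied. Theorem~\ref{thm 1.2-Neumann for elliptic sum} then produces a unique $(k-1)$-admissible solution $u^\epsilon\in C^{3,\ga}(\overline\ooo)$.

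The heart of the proof is to show the estimates for $u^\epsilon$ are independent of $\epsilon$. First, I would control $\epsilon\|u^\epsilon\|_\infty$: since $u^\epsilon$ is $(k-1)$-admissible, $\Delta u^\epsilon=\si_1(D^2u^\epsilon)>0$, so $\max u^\epsilon$ is attained at some $x_0\in\pa\ooo$, where $u^\epsilon_\nu(x_0)\geq 0$ gives $\epsilon\max u^\epsilon\leq\max\pp$; an analogous Hopf-type argument on $\partial\Omega$, combined with a barrier constructed from the defining function of $\ooo$ to exclude a deep interior minimum, gives $\epsilon\min u^\epsilon\geq -C$. Next I normalize $w^\epsilon:=u^\epsilon-u^\epsilon(x^*)$ at a fixed interior point $x^*\in\ooo$ and establish a uniform oscillation bound $\|w^\epsilon\|_\infty\leq C$. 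Once this is in hand, the proof of Theorem~\ref{thm 1.2-Neumann for elliptic sum} yields $C^1$ and $C^2$ bounds on $w^\epsilon$ independent of $\epsilon$ (the estimates there use only positivity of $f$, the geometry of $\pa\ooo$, and the oscillation of $u$). Standard Evans--Krylov and Schauder theory then gives $\|w^\epsilon\|_{C^{3,\ga}(\overline\ooo)}\leq C$. Passing to a subsequence $\epsilon_j\to 0$, $w^{\epsilon_j}\to u$ in $C^{3,\ga'}(\overline\ooo)$ and $\epsilon_j u^{\epsilon_j}(x^*)\to -s$ for some constant $s\in\rr$. Writing $u^\epsilon_\nu=-\epsilon w^\epsilon-\epsilon u^\epsilon(x^*)+\pp(x)$ and letting $\epsilon_j\to 0$ yields exactly $u_\nu=s+\pp(x)$ on $\pa\ooo$, while $u$ solves the sum Hessian equation in $\ooo$.

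The main obstacle will be the uniform oscillation estimate for $w^\epsilon$. The mechanism that closes the $C^0$ argument in Theorem~\ref{thm 1.2-Neumann for elliptic sum}, namely the strict sign $\pp_u\leq c_\pp<0$, degenerates as $\epsilon\to 0$. I expect to replace it by explicit barriers of the form $\ppp(x)=\tfrac{A}{2}|x-x_\ast|^2+\ell(x)$, where $x_\ast$ and $A$ are chosen using uniform convexity of $\ooo$ so that $\ppp_\nu$ strictly dominates (or is strictly dominated by) $\pp$ on $\pa\ooo$ and $\si_k(D^2\ppp)+\al\si_{k-1}(D^2\ppp)$ appropriately compares with $f$; the maximum principle for $\si_k+\al\si_{k-1}$ in $\tilde\gaa_k$ then sandwiches $u^\epsilon$ between $\ppp_\pm+\text{const}$, bounding $\mathrm{osc}(u^\epsilon)$ uniformly in $\epsilon$.

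Finally, for uniqueness: if $(u_1,s_1)$ and $(u_2,s_2)$ both solve \eqref{eq-classical Neumann for elliptic sum (thm 1.3)}, integration along the segment joining $D^2u_1$ and $D^2u_2$ shows that $v:=u_1-u_2$ satisfies a linear uniformly elliptic equation $a^{ij}v_{ij}=0$ in $\ooo$ with Neumann condition $v_\nu=s_1-s_2$ on $\pa\ooo$. The divergence-free integration of $a^{ij}v_{ij}$ against $1$ forces $s_1=s_2$, and then the strong maximum principle forces $v\equiv\te{const}$.
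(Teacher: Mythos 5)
Your overall strategy is the same as the paper's: regularize the boundary condition to $u^\es_\nu=-\es u^\es+\pp(x)$, invoke Theorem \ref{thm 1.2-Neumann for elliptic sum} for each $\es$, prove $\es$-independent estimates, and pass to the limit; the bound $|\es u^\es|_{C^0}\leq C$ and the limiting identification of $s$ are also as in the paper. The genuine gap is in what you yourself call the heart of the proof: the uniform oscillation bound. Your proposed mechanism --- sandwiching $u^\es$ between quadratic barriers $\ppp_\pm$ whose normal derivatives dominate (or are dominated by) $\pp$ and whose operator values compare with $f$ --- does not produce an oscillation bound. A maximum-principle comparison of $u^\es$ with a fixed barrier $\ppp$ only tells you \emph{where} the extrema of $u^\es-\ppp$ are attained (interior extrema are excluded when $S_k(D^2\ppp)$ beats $f$ in the right direction), and at a boundary extremum Hopf's lemma yields a one-sided inequality between $u^\es_\nu$ and $\ppp_\nu$ at a single point, which by your choice of $\ppp_\pm$ is either automatically satisfied (no information) or contradictory (ruling that extremum out). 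Since the boundary condition constrains only the normal derivative up to the bounded term $-\es u^\es$, and the problem is asymptotically invariant under adding constants, no pointwise comparison with a fixed barrier can convert this into a quantitative two-sided bound on $u^\es$ modulo constants; an oscillation bound has to come from a derivative estimate. This also makes your order of steps circular: you need the oscillation bound before applying the gradient estimate of Theorem \ref{thm elliptic near boundary gradient} (whose constants depend on $M_0=\sup|u|$), but the only known way to start is the opposite one. The paper's route, following \cite[Proposition 5]{Qiu-Xia-IMRN-2019} (and carried out explicitly in Step 2 of the proof of Theorem \ref{thm 7.1}), is to prove a global gradient bound for $u^\es$ that is \emph{independent of} $|u^\es|_{C^0}$, using the test function $\log|Dw|^2+\al_0 h$ with $h$ a defining function of the uniformly convex domain; uniform convexity enters through the boundary term $w_l w_{li}\nu^i\leq -\ka_{\min}|Du|^2+C(|Du|+1)$. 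Once $\sup|Du^\es|\leq C$ uniformly, the oscillation bound is immediate ($\operatorname{osc} u^\es\leq C\operatorname{diam}\ooo$), and the second-order estimates of Theorem \ref{thm elliptic C^2 estimates} apply in their ``$\pp_u\leq 0$ and $\ooo$ uniformly convex'' form with constants independent of $\es$ and $|u^\es|_{C^0}$. To repair your proof you must supply this $C^0$-independent gradient estimate; the barriers cannot substitute for it.

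A smaller point: in the uniqueness argument, integrating $a^{ij}v_{ij}=0$ against $1$ does not force $s_1=s_2$, because even though $a^{ij}$ is row-divergence-free, the resulting boundary flux is the conormal derivative $a^{ij}v_i\nu_j$, not $v_\nu$, and the tangential contribution has no sign. The standard (and the paper's) argument is direct: if $s_1>s_2$, then $v=u_1-u_2$ satisfies $a^{ij}v_{ij}=0$ with $v_\nu=s_1-s_2>0$ on $\pa\ooo$; at a boundary minimum of $v$, Hopf's lemma gives $v_\nu<0$ unless $v$ is constant, a contradiction either way. Hence $s_1=s_2$, and then the maximum principle with Hopf's lemma forces $v\equiv\te{const}$.
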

\begin{remark}
    Note that the classical Neumann problem \eqref{eq-classical Neumann for elliptic sum (thm 1.3)} lacks uniform $C^0$ estimate for $u$. Following \textup{\cite{Chen-Zhang-Bull-2021,LTU,Qiu-Xia-IMRN-2019}}, we instead study the $(k-1)$-admissible solution $u_\es$ of the perturbed Neumann problem 
    \begin{equation}\label{eq elliptic perturbed Neumann problem}
        \left\{\begin{aligned}
            & \si_k(D^2 u_\es)+\al \si_{k-1}(D^2 u_\es )=f(x) && \te{in } \ooo, \\
            & (u_\es)_\nu =-\es u_\es +\pp(x) && \te{on } \pa \ooo, 
        \end{aligned}\right.
    \end{equation}
    for small $\es >0$. To derive uniform gradient and second-order derivatives estimate for $u_\es$ that are independent of $\es$, we require $\ooo$ to be uniformly convex. Taking $\es \to 0$, we can obtain a solution to \eqref{eq-classical Neumann for elliptic sum (thm 1.3)}, with uniqueness following immediately from the maximum principle and Hopf's lemma.
\end{remark}

For Neumann problem of parabolic sum Hessian equations. Previous work has addressed several cases, including the Monge-Amp\`ere equation \cite{Schnurer-Schwetlick-PJM-2004}, mean curvature equation \cite{Ma-Wang-Wei-2018-JFA}, and Hessian quotient equation \cite{chen-ma-zhang-2021}. We now extend this result to sum Hessian case:
\begin{equation}\label{eq parabolic sum hessian-1}
    \left\{\begin{aligned}
        & u_t = \log S_k(D^2 u)-\log f(x,u) && \te{in } \ooo \times [0,T),\\
        & u_\nu=\pp(x,u) && \te{on } \pa \ooo\times [0,T),\\
        & u(x,0)=u_0(x) && \te{in }  \ooo, 
    \end{aligned}\right.
\end{equation} 
where $u$ is a function defined on $\overline{\ooo}\times[0,T)$, $S_k$ is defined in Definition \ref{def 2.1}, and $T$ is the maximal time.
Before stating our theorem, we first introduce some structural conditions on $f$, $\pp$ and $u_0$. We assume $f$, $\pp$, and $u_0$ are smooth functions (in parabolic problems) and  
\begin{align}
    & f>0 \quad \te{and} \quad f_u\geq  0, \label{condition f>0} \\
    & \pp_u\leq c_\pp<0, \label{condition c_pp} \\
    & \frac{f_u}{f}\geq c_f>0,\label{condition c_f}\\
    & S_k(D^2 u_0)\geq f(x,u_0),\label{condition S_k(D^2 u_0)} \\
    & \pa_t^l (u_\nu-\pp(x,u))|_{t=0}=0, \quad \forall \  l\geq 0, \ x\in \pa \ooo.\label{condition compatibility}
\end{align}

Our third main theorem is
\begin{theorem}\label{thm 1.5-parabolic}
    Let $\ooo \subset \rr^n$ be a smooth, almost convex and uniformly $(k-1)$-convex domain, $f,\pp\in C^\wq(\overline{\ooo}\times \rr)$ satisfy \eqref{condition f>0}, \eqref{condition c_pp} and $2a_\ka >c_{\pp}$. Furthermore, assume that one of the conditions \eqref{condition c_f} and \eqref{condition S_k(D^2 u_0)} holds. If there exists a smooth $(k-1)$-admissible function $u_0$ satisfying the compatibility condition \eqref{condition compatibility}. Then, equation \eqref{eq parabolic sum hessian-1} admits a unique smooth solution $u(x,t)$ for all $t\geq 0$. Moreover, as $t\to\wq$, $u(x,t)$ converges smoothly to a unique solution $u^\wq$ of the Neumann problem
    \begin{equation}\label{eq convergent elliptic Equation}
        \left\{\begin{aligned}
            & S_k(D^2 u)=f(x,u) && \te{in } \ooo, \\
            & u_\nu=\pp(x,u) &&\te{on } \pa \ooo.
        \end{aligned}\right.
    \end{equation}
    In particular, the convergence is exponential if \eqref{condition c_f} holds.
\end{theorem}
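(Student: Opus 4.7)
The plan is to combine short-time existence with uniform-in-time a priori estimates and then pass to the long-time limit. The equation is strictly parabolic on the $(k-1)$-admissible cone, so standard parabolic theory applied to the linearization at $u_0$, together with the compatibility condition \eqref{condition compatibility}, yields a smooth $(k-1)$-admissible solution on some maximal interval $[0,T^*)$. To show $T^*=\wq$ it suffices to derive uniform $C^{2,\ga}(\overline{\ooo})$ bounds on $u(\cdot,t)$ that do not depend on $t$; parabolic Evans--Krylov together with Schauder estimates then permit continuation past any finite time.

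For the $C^0$ bound I would exploit the strict sign $\pp_u\leq c_\pp<0$ at the spatial extrema: if $u$ attains a boundary maximum at $(x_0,t_0)$ then Hopf's lemma gives $u_\nu(x_0,t_0)\geq 0$, so $\pp(x_0,u(x_0,t_0))\geq 0$, which bounds $u$ from above by a constant determined by $\pp$ alone; at interior extrema the parabolic equation together with the sign of $f$ handles the remaining cases, and a symmetric argument produces the lower bound. The $C^1$ estimate then follows by a maximum principle on an auxiliary function of the form $w=|Du|^2 e^{Au}+B\,d(x)$, where $d$ is a defining function for $\pa\ooo$: the interior term is controlled by the concavity of $\log(\si_k+\al\si_{k-1})$ on $\tilde{\gaa}_k$, while the conormal derivative of $w$ is absorbed using $\pp_u\leq c_\pp<0$ and the uniform $(k-1)$-convexity of $\pa\ooo$.

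The main obstacle will be the $C^2$ estimate, and the strategy is to transplant the proof of Theorem \ref{thm 1.2-Neumann for elliptic sum} to the parabolic setting. Applying the linearized operator $\mc{L}:=\pa_t-F^{ij}\pa_{ij}$, with $F^{ij}=\pa(\log S_k)/\pa u_{ij}$, to test functions built from $u_{\e\e}$, $|Du|^2$ and $u$, one obtains the interior double-tangential bound from concavity of $\log S_k$ on $\tilde{\gaa}_k$; the boundary doubly-tangential estimate invokes uniform $(k-1)$-convexity, while the mixed tangential-normal and doubly-normal estimates invoke the almost-convexity assumption $2a_\ka>c_\pp$, exactly as in the elliptic proof. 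The extra $\pa_t$ term contributes only a lower-order correction that is absorbed once $|u_t|$ is controlled; since $u_t=\log S_k-\log f$, the $C^2$ bound in turn controls $u_t$, closing the bootstrap. The resulting estimates are uniform in $t$, so $T^*=\wq$.

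For the long-time behaviour, set $v:=u_t$. Differentiating the equation gives $v_t=F^{ij}v_{ij}-(f_u/f)v$ with the oblique boundary condition $v_\nu=\pp_u v$. Under \eqref{condition S_k(D^2 u_0)} one has $v|_{t=0}\geq 0$, and the parabolic maximum principle for oblique problems (using $\pp_u<0$) forces $v\geq 0$ for all $t$; monotonicity together with the uniform $C^{2,\ga}$ bound yields $u(\cdot,t)\to u^\wq$ in $C^{2,\ga'}$ for every $\ga'<\ga$, and $u^\wq$ solves \eqref{eq convergent elliptic Equation}. Under the alternative \eqref{condition c_f}, the lower-order coefficient $-f_u/f\leq -c_f$ in the equation for $v$ gives $\|v(\cdot,t)\|_{L^\wq(\ooo)}\leq C e^{-c_f t}$ by the maximum principle, which upgrades to exponential convergence in every $C^m$ norm by interpolation with the uniform $C^{2,\ga}$ bound. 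Uniqueness of $u^\wq$ in both cases follows by applying Hopf's lemma and $\pp_u<0$ to the difference of two solutions.
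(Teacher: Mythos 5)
Your overall architecture coincides with the paper's: uniform-in-time $u_t$, $C^0$, $C^1$, $C^2$ estimates, then concavity of $\log S_k$ plus Lieberman--Trudinger/Krylov theory for uniform $C^{2,\ga}$ bounds and long-time existence, and finally convergence via monotonicity of $u_t$ under \eqref{condition S_k(D^2 u_0)} or exponential decay of $u_t$ under \eqref{condition c_f}. However, two of your steps have genuine gaps. First, the $C^0$ bound: your ``symmetric argument'' for the lower bound fails, because admissibility only excludes \emph{interior maxima} (it gives $\Delta u>0$); at an interior spatial minimum one has $D^2u\geq 0$ and $u_t\leq 0$, hence merely $0\leq S_k(D^2u)\leq f$, which is no contradiction, so interior minima are not ruled out by ``the sign of $f$''. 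This is precisely where the hypotheses \eqref{condition c_f} or \eqref{condition S_k(D^2 u_0)} enter the paper (Lemmas \ref{lem parabolic u_t estimate}--\ref{lem parabolic C^0}): one first bounds $u_t$ by the maximum principle applied to $v=u_t$ (the equation $v_t=\widetilde F^{ij}v_{ij}-(f_u/f)v$, $v_\nu=\pp_u v$ that you only write down later), obtaining $u_t\geq 0$ under \eqref{condition S_k(D^2 u_0)} (so $u\geq u_0$) or $|u_t|\leq Ce^{-c_f t}$ under \eqref{condition c_f} (so integrate in $t$); alternatively one can push the minimum to the parabolic boundary with a comparison function $A|x|^2$ as in Theorem \ref{thm elliptic C^0}. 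Deriving the $u_t$ bound first also removes the apparent circularity in your $C^2$ step (``$C^2$ controls $u_t$, closing the bootstrap''): in the paper the $u_t$ estimate is a self-contained maximum-principle fact preceding all spatial estimates.

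Second, and more seriously, the gradient estimate cannot be dispatched with the auxiliary function $w=|Du|^2e^{Au}+B\,d(x)$. At a boundary maximum its conormal derivative contains the term $2u_\nu u_{\nu\nu}=2\pp(x,u)\,u_{\nu\nu}$, and $u_{\nu\nu}$ is not controlled at the $C^1$ stage (nor does $\pp$ have a sign), so the boundary case cannot be closed; convexity of $\pa\ooo$ and $\pp_u\leq c_\pp<0$ only help with the tangential part $\sum_\tau u_\tau u_{\tau\nu}$. This is exactly why the paper (following Chou--Wang for the interior bound, Theorem \ref{thm parabolic interior gradient}, and Wang's method for the near-boundary bound, Theorem \ref{thm parabolic near boundary gradient}) works with the tangential quantity $\p=|Dw|^2-(Dw\cdot Dd)^2$ built from $w=u+\pp d/\cos\ttt$, which eliminates the double-normal second derivative from the boundary computation; the interior estimate in turn uses the structural inequalities $S_k^{11}\geq c_{n,k}\sum_i S_k^{ii}$ when $u_{11}<0$ and $\sum_i S_k^{ii}\geq c_0>0$ (Lemmas \ref{lem 2.3}--\ref{lem 2.4}), not concavity alone. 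These gradient estimates are a substantial part of the proof and cannot be replaced by the simple test function you propose; the remaining parts of your plan ($C^2$ estimates by transplanting the elliptic Section \ref{sec 4-second derivative}, long-time existence, and the convergence argument) do match the paper's proof.
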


\begin{theorem}\label{thm 1.6-classical parabolic}
    Let $\ooo \subset \rr^n$ be a smooth uniformly convex domain, $f,\pp \in C^\wq (\overline{\ooo})$ satisfy $f>0$. If $u_0\in C^\wq(\overline{\ooo})$ is a $(k-1)$-admissible function satisfying
    $(u_0)_\nu=\pp(x)$ on $\pa \ooo$.
    Then there exists a unique smooth $(k-1)$-admissible solution $u(x,t)$ to 
    \begin{equation}\label{eq parabolic sum hessian-2}
        \left\{\begin{aligned}
            & u_t = \log S_k(D^2 u)-\log f(x) && (x,t)\in \ooo \times (0,T),\\
            & u_\nu=\pp(x) &&  (x,t)\in \pa \ooo\times [0,T),\\
            & u(x,0)=u_0(x) &&  x\in  \overline\ooo, 
        \end{aligned}\right.
    \end{equation}
    where $u(\cdot,t)$ converges to $u_0$ in $C^2(\overline{\ooo})$ as $t\to 0$. Moreover, $u(\cdot,t)$ converges smoothly to a translating solution $s^\wq t+u_{\operatorname{ell}}^\wq$, where $(s^\wq,u_{\operatorname{ell}}^\wq)$ is a suitable solution to \eqref{eq 7.1}.
\end{theorem}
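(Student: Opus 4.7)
The plan is to reduce Theorem \ref{thm 1.6-classical parabolic} to Theorem \ref{thm 1.5-parabolic} via the same perturbation that was used to pass from Theorem \ref{thm 1.2-Neumann for elliptic sum} to Theorem \ref{thm 1.3-classical Neumann for elliptic sum}, and then to analyze the long-time behavior of the resulting flow. The key difference with Theorem \ref{thm 1.5-parabolic} is that $f$ and $\pp$ are independent of $u$, so the equation is invariant under $u\mapsto u+C$ and no $C^0$ estimate can hold; this is precisely why we now need $\ooo$ to be uniformly convex instead of merely almost convex and $(k-1)$-convex.

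For each small $\es>0$ I would first consider the perturbed problem
\begin{equation*}
    (u^\es)_t = \log S_k(D^2 u^\es) - \log f(x), \qquad (u^\es)_\nu = -\es u^\es + \pp(x),
\end{equation*}
with a smooth $(k-1)$-admissible initial datum $u_0^\es$ obtained by modifying $u_0$ in a thin collar of $\pa\ooo$ so that the compatibility conditions \eqref{condition compatibility} hold relative to $\pp^\es(x,u):=-\es u+\pp(x)$, and so that $S_k(D^2 u_0^\es)\ge f(x)$ (the latter forced by adding a small convex quadratic if needed). All hypotheses of Theorem \ref{thm 1.5-parabolic} are then satisfied: uniform convexity of $\ooo$ gives $a_\ka>0$ and hence $2a_\ka>-\es=c_{\pp^\es}$; moreover $\pp^\es_u=-\es<0$, and condition \eqref{condition S_k(D^2 u_0)} holds by construction. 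This yields a unique smooth $(k-1)$-admissible solution $u^\es$ on $\overline\ooo\times[0,\wq)$ converging, as $t\to\wq$, to the unique solution of the elliptic perturbed Neumann problem \eqref{eq elliptic perturbed Neumann problem}.

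Next I would derive $\es$-uniform a priori estimates. Differentiating the equation in $t$ gives a linear oblique parabolic equation for $v^\es:=u^\es_t$ with boundary relation $v^\es_\nu+\es v^\es=0$; the maximum principle yields $\sup_{\ooo\times[0,\wq)}|u^\es_t|\le \sup_\ooo|\log S_k(D^2 u_0^\es)-\log f|$, uniformly in $\es$. Hence $\log S_k(D^2 u^\es)$ is uniformly bounded, and the boundary/interior $C^2$ arguments of Theorem \ref{thm 1.3-classical Neumann for elliptic sum} (which bypass $C^0$ control by using oscillation estimates together with the uniform convexity of $\ooo$) carry over to give uniform bounds on $Du^\es$ and $D^2u^\es$ independent of $\es$ and $t$. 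Passing $\es\to 0$ along a subsequence, after normalizing by the mean value to fix the vertical freedom $u\mapsto u+C$, produces a smooth $(k-1)$-admissible solution $u$ to \eqref{eq parabolic sum hessian-2}; the convergence $u(\cdot,t)\to u_0$ in $C^2(\overline\ooo)$ as $t\to 0^+$ follows from $u_0^\es\to u_0$ in $C^2$ combined with the uniform $C^{2,\al}$ estimates, and uniqueness modulo an additive constant is standard for oblique parabolic equations via the maximum principle.

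For the long-time asymptotics, $u_t$ satisfies a linear oblique equation with pure Neumann data $(u_t)_\nu=0$ in the limit. Monotonicity of $\sup_\ooo u_t$ and $\inf_\ooo u_t$ along the flow, obtained from the Hopf lemma for the equation of $u_t$, together with the uniform $C^{2,\al}$ estimates force $u_t(\cdot,t)$ to converge smoothly to a constant $s^\wq$ as $t\to\wq$; then $u(\cdot,t)-s^\wq t$ converges smoothly to a function $u^\wq_{\operatorname{ell}}$, and the pair $(s^\wq,u^\wq_{\operatorname{ell}})$ satisfies $\log S_k(D^2u^\wq_{\operatorname{ell}})-\log f=s^\wq$ with $(u^\wq_{\operatorname{ell}})_\nu=\pp$, which is \eqref{eq 7.1}. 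The uniqueness of $s^\wq$ and of $u^\wq_{\operatorname{ell}}$ up to an additive constant is then a direct consequence of Theorem \ref{thm 1.3-classical Neumann for elliptic sum}. The main obstacle throughout is the derivation of the $\es$-uniform $C^1,C^2$ estimates in the absence of any $C^0$ control on $u^\es$, which must be handled by substituting oscillation bounds for $C^0$ bounds, exactly as in the proof of Theorem \ref{thm 1.3-classical Neumann for elliptic sum}.
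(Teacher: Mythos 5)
Your reduction to Theorem \ref{thm 1.5-parabolic} breaks down at the very first step. Since here $f$ does not depend on $u$, condition \eqref{condition c_f} fails ($f_u\equiv 0$), so to invoke Theorem \ref{thm 1.5-parabolic} you must arrange \eqref{condition S_k(D^2 u_0)}, i.e. $S_k(D^2 u_0^\es)\geq f$. This cannot be achieved by ``adding a small convex quadratic'': if $S_k(D^2 u_0)<f$ at some interior point, every $C^2$-small perturbation still violates the inequality there, so $u_0^\es$ must differ from $u_0$ by an amount that does not vanish as $\es\to 0$. The limit of your perturbed flows then solves \eqref{eq parabolic sum hessian-2} with a different initial value, and the asserted convergence $u(\cdot,t)\to u_0$ in $C^2(\overline{\ooo})$ as $t\to 0$ is lost; in other words the stated problem is not solved. (The same construction would also have to produce compatibility of all orders \eqref{condition compatibility} while preserving $(k-1)$-admissibility, which you do not justify.)

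The second, more substantial gap is the claim that gradient and second-order estimates independent of $\es$ and of $|u^\es|_{C^0}$ ``carry over'' from the proof of Theorem \ref{thm 1.3-classical Neumann for elliptic sum}. The parabolic estimates available in the paper (Theorems \ref{thm parabolic interior gradient}, \ref{thm parabolic near boundary gradient} and \ref{thm parabolic C^2}) all depend on $N_0=\sup|u|$, and the elliptic gradient estimate used for Theorem \ref{thm 1.3-classical Neumann for elliptic sum} is borrowed from Qiu--Xia for the perturbed elliptic problem; no parabolic analogue is established, and your sketch does not supply one. These $C^0$-independent bounds are precisely the technical heart of the matter: they are needed both for the $\es\to 0$ limit and, more importantly, uniformly in $t$ (since $|u|\sim |s^\wq| t\to\wq$), without which your long-time argument (monotone $\sup_\ooo u_t$, $\inf_\ooo u_t$ plus ``uniform $C^{2,\ga}$'') has nothing to stand on --- note also that monotonicity alone does not force the two limits to coincide. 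The paper instead works directly with the unperturbed flow: it proves gradient and $C^2$ bounds independent of $|u|_{C^0}$ by exploiting the $u$-independent Neumann data and the uniform convexity (auxiliary function $\log|Dw|^2+\al_0 h$ with a defining function $h$), constructs the translating data $(s^\wq,u_{\operatorname{ell}}^\wq)$ separately in Theorem \ref{thm 7.1} via the elliptic perturbation $S_k(D^2u)=f e^{s+\es u}$, and then obtains the $C^0$ estimate for the flow by comparing $u$ with $u_{\operatorname{ell}}^\wq+s^\wq t$ through the maximum principle. Finally, your appeal to Theorem \ref{thm 1.3-classical Neumann for elliptic sum} for the uniqueness of $(s^\wq,u_{\operatorname{ell}}^\wq)$ is a mis-citation: in \eqref{eq 7.1} the constant enters multiplicatively in the equation ($fe^s$) rather than additively in the boundary condition, and its uniqueness is part of Theorem \ref{thm 7.1}, not of Theorem \ref{thm 1.3-classical Neumann for elliptic sum}.
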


This paper is organized as follows. Section \ref{sec 2-pre} introduces the necessary notation and establishs several key inequalities of sum Hessian operators. Section \ref{sec 3-C^0 and C^1} is devoted to deriving the $C^0$ and global gradient estimates, which are inspired by the works of \cite{Chou-Wang-CPAM-2001} and \cite{Wang Peihe-2022}. In Section \ref{sec 4-second derivative}, we derive the global second-order derivatives estimate. The proofs of Theorems \ref{thm 1.2-Neumann for elliptic sum} and \ref{thm 1.3-classical Neumann for elliptic sum} are presented in Section \ref{sec-existence}. Finally, Sections \ref{sec-parabolic} and \ref{sec-classical parabolic} investigate the Neumann problem for the parabolic versions, proving Theorems \ref{thm 1.5-parabolic} and \ref{thm 1.6-classical parabolic} respectively.

\section{Preliminaries}\label{sec 2-pre}
\setcounter{equation}{0}

In this section, we introduce some notations which are the same as \cite{Liang-Yan-Zhu-2024-arXiv,Liu-Ren-2023-JFA}, then establish some inequalities for the later use. For convenience, we adopt the following convention for constants: constants depending only on $n$ and $k$ will be denoted by $c_{n,k}$, $c(n,k)$, $C_{n,k}$, or $C(n,k)$, while constants that additionally depend on the information of the equation will be denoted by $C_1,C_2,\cdots$. As usual, these constants may change from line to line.

\begin{definition}\label{def 2.1}
    Let $\lam=(\lam_1,\cdots,\lam_n)\in \rr^n$. For any $1\leq i,j \leq n$, we have
    \begin{enumerate}
        \item For any fixed $\al >0$. We define 
        $$
        S_k(\lam):=\si_k(\lam)+\al \si_{k-1}(\lam) \quad \te{and} \quad  S_{k,i}(\lam)=S_{k}(\lam|i):=S_k(\lam)\big|_{\lam_i=0}.
        $$
        \item $S_k(\lam)=\lam_i S_{k-1}(\lam|i)+S_k(\lam|i)$.
        \item $S_k^{ii}(\lam):=\dfrac{\pa S_k(\lam)}{\pa \lam_i}=\si_{k-1}(\lam|i)+\al\si_{k-2}(\lam|i)=S_{k-1}(\lam|i)$.
        \item $S_k^{ii,jj}(\lam):=\dfrac{\pa S_k(\lam)}{\pa \lam_i \pa \lam_j}=S_{k-2}(\lam|ij)=S_{k-2,ij}(\lam)$. In particular, $S_k^{ii,ii}(\lam)=0$.
        \item $\sum\limits_{i=1}^n S_k^{ii}(\lam)=(n-k+1)\si_{k-1}(\lam)+\al(n-k+2)\si_{k-2}(\lam)$. 
        \item $\sum\limits_{i=1}^n S_k(\lam|i)=(n-k)S_{k}(\lam)+\al\si_{k-1}(\lam)$. 
        \item $\sum\limits_{i=1}^{n}\lam_i S_{k-1}(\lam|i)=kS_k(\lam)-\al\si_{k-1}(\lam)$.
    \end{enumerate}
\end{definition}

The following lemmas are very useful for us to establish a priori estimates.
\begin{lemma}\label{lem 2.2}
    Let $\lambda=(\lambda_1,\cdots,\lambda_n)\in\tilde\Gamma_k$, $1\leq k\leq n$, and  $\lambda_1 \geq \lam_2\geq\cdots \geq\lambda_n$, then
    \par
    $(a)$ $\tilde{\gaa}_{k}$ is a convex set and $\si_{k}(\lam)+\al \si_{k-1}(\lam)$ is elliptic in $\tilde{\gaa}_k$.
    \par
    $(b)$ $S_k^{\frac{1}{k}}(\lam)$ and $\z(\frac{S_k}{S_l}\x)^{\frac{1}{k-l}}(\lam)$ are concave in $\tilde\gaa_k$ for $1\leq l+1 \leq  k\leq n$.
    \par
    $(c)$ There exists a positive constant $\theta_0$ depending on $n,k$, such that
    \begin{equation}\label{eq 2.1}
        \lam_i S_k^{ii}(\lam)\geq \theta_0 S_k(\lam), \quad \te{for any } 1\leq i \leq k-1.
    \end{equation}
\end{lemma}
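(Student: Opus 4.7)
The three parts share one reduction: append $\al$ as an auxiliary coordinate and set
\[
\tilde{\lam}=(\lam_{1},\dots,\lam_{n},\al)\in\rr^{n+1}.
\]
A direct expansion of the elementary symmetric polynomials in $\rr^{n+1}$ yields
\[
\si_{j}(\tilde{\lam})=\si_{j}(\lam)+\al\,\si_{j-1}(\lam)=S_{j}(\lam)\quad(0\leq j\leq k),\qquad\si_{k-1}(\tilde{\lam}|i)=S_{k}^{ii}(\lam)\quad(1\leq i\leq n).
\]
Moreover $\lam\in\tilde{\gaa}_{k}$ forces $\tilde{\lam}\in\gaa_{k}^{n+1}$: for $j\leq k-1$, $\si_{j}(\tilde{\lam})>0$ comes from $\lam\in\gaa_{k-1}$, $\al>0$ and $\si_{0}\equiv 1$; and $\si_{k}(\tilde{\lam})=S_{k}(\lam)>0$ is the defining inequality of $\tilde{\gaa}_{k}$. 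Thus every statement about $S_{k}$ on $\tilde{\gaa}_{k}$ translates to a statement about $\si_{k}$ on $\gaa_{k}^{n+1}$ under the affine map $\lam\mapsto\tilde{\lam}$.

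Part (a) is then immediate: $\tilde{\gaa}_{k}$ is the affine preimage of the classical convex cone $\gaa_{k}^{n+1}$, and the ellipticity $S_{k}^{ii}=\si_{k-1}(\tilde{\lam}|i)>0$ is just the classical ellipticity of $\si_{k}$ on $\gaa_{k}^{n+1}$. For part (b), both $S_{k}^{1/k}=\si_{k}(\tilde{\lam})^{1/k}$ and $(S_{k}/S_{l})^{1/(k-l)}=(\si_{k}/\si_{l})^{1/(k-l)}(\tilde{\lam})$ inherit concavity from the classical concavity of $\si_{k}^{1/k}$ and $(\si_{k}/\si_{l})^{1/(k-l)}$ on $\gaa_{k}^{n+1}$ (G\aa rding, Caffarelli--Nirenberg--Spruck), because concavity is preserved under an affine change of variables.

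For part (c) the identity $\lam_{i}S_{k}^{ii}(\lam)=\lam_{i}\si_{k-1}(\tilde{\lam}|i)$ reduces everything to a Lin--Trudinger-type bound: for the decreasing reordering $\mu_{1}\geq\cdots\geq\mu_{n+1}$ of $\tilde{\lam}\in\gaa_{k}^{n+1}$ and each rank $j\leq k-1$ one has $\mu_{j}\si_{k-1}(\mu|j)\geq c(n+1,k)\si_{k}(\mu)$. I would split on the position of $\al$ in the sorted $\tilde{\lam}$: if $\al<\lam_{k-1}$, each of $\lam_{1},\dots,\lam_{k-1}$ keeps rank $\leq k-1$ in $\tilde{\lam}$ and the bound applies directly; if $\al\geq\lam_{k-1}$ the estimate is still immediate for $i\leq k-2$, and for the borderline index $i=k-1$ I would combine the rearrangement identity
\[
S_{k}^{k-1,k-1}(\lam)=\si_{k-1}(\lam)+(\al-\lam_{k-1})\,\si_{k-2}(\lam|(k-1))
\]
(derived from $\si_{k-1}(\lam|(k-1))=\si_{k-1}(\lam)-\lam_{k-1}\si_{k-2}(\lam|(k-1))$) with the positivity $\si_{k-2}(\lam|(k-1))>0$ on $\gaa_{k-1}$ and the Lin--Trudinger bound applied at the rank of $\al$, which delivers $\al\,\si_{k-1}(\lam)\geq c\,S_{k}(\lam)$. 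The hardest point will be exactly this borderline: when $\al\geq\lam_{k-1}$ the rank of $\lam_{k-1}$ in $\tilde{\lam}$ becomes $k$, just outside the standard Lin--Trudinger range, so the missing bound for $i=k-1$ has to be manufactured from the inequality available at the rank of $\al$ via the explicit expression for $S_{k}^{k-1,k-1}$ written above.
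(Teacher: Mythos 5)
Your augmentation trick --- passing to $\tilde\lambda=(\lambda_1,\dots,\lambda_n,\alpha)\in\mathbb{R}^{n+1}$, so that $S_j(\lambda)=\sigma_j(\tilde\lambda)$ and $S_k^{ii}(\lambda)=\sigma_{k-1}(\tilde\lambda|i)$ --- is a genuinely different route from the paper, which proves nothing itself and simply cites \cite{LRW1} for $(a)$, $(b)$ and \cite{Liu-Ren-2023-JFA} for $(c)$; for $(a)$ and $(b)$ your reduction is essentially correct. One small omission in $(a)$: you verify $\tilde\Gamma_k\subseteq\{\lambda:(\lambda,\alpha)\in\Gamma_k^{n+1}\}$ but then use the two sets interchangeably; convexity of $\tilde\Gamma_k$ needs the reverse inclusion as well, which follows from the standard fact that deleting one entry of a vector in $\Gamma_k^{n+1}$ yields a vector in $\Gamma_{k-1}^{n}$ (apply it with the deleted entry $\alpha$). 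That is easily added.

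The genuine gap is in $(c)$, precisely at the borderline you flag. The cases $\alpha<\lambda_{k-1}$, and $i\le k-2$ when $\alpha\ge\lambda_{k-1}$, are fine, since then $\lambda_i$ has rank at most $k-1$ in $\tilde\lambda$. But for $i=k-1$ with $\alpha\ge\lambda_{k-1}$, your two ingredients are (i) $S_k^{k-1,k-1}(\lambda)\ge\sigma_{k-1}(\lambda)$ (from your identity and $\sigma_{k-2}(\lambda|(k-1))>0$) and (ii) $\alpha\,\sigma_{k-1}(\lambda)\ge c\,S_k(\lambda)$, and combining them only gives $\lambda_{k-1}S_k^{k-1,k-1}\ge c\,\frac{\lambda_{k-1}}{\alpha}\,S_k(\lambda)$, where the factor $\frac{\lambda_{k-1}}{\alpha}$ is not bounded below by any constant depending only on $n,k$. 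Concretely, take $k=n=3$ and $\lambda=(\Lambda,\epsilon,\epsilon)$ with $\epsilon\ll\alpha\ll\Lambda$: then $\lambda_2\,\sigma_2(\lambda)\approx2\Lambda\epsilon^2$ while $S_3(\lambda)\approx2\alpha\Lambda\epsilon$, so the portion of $\lambda_{k-1}S_k^{k-1,k-1}$ controlled by your chain is smaller than $S_k$ by the uncontrolled factor $\epsilon/\alpha$; the lemma holds in this example only because of the term you discard, $\lambda_{k-1}(\alpha-\lambda_{k-1})\sigma_{k-2}(\lambda|(k-1))\approx\alpha\Lambda\epsilon$. To close this case you therefore need an additional estimate of the form $\lambda_{k-1}\,\sigma_{k-2}(\lambda|(k-1))\ge c(n,k)\,\sigma_{k-1}(\lambda)$ for $\lambda\in\Gamma_{k-1}$, which is again a top-rank inequality of exactly the kind excluded from the rank $\le k-1$ bound you quote (equivalently, you would need the $\mathbb{R}^{n+1}$ bound up to rank $k$, which your own framing concedes you do not have). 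As written, your proof of $(c)$ does not close, whereas the paper sidesteps the issue by invoking \cite[Lemma 2.4]{Liu-Ren-2023-JFA}.
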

\begin{proof}
    The proofs of $(a)$ and $(b)$ can be found in \cite[Theorem 10 and Corollary 13]{LRW1}, and $(c)$ is proved in \cite[Lemma 2.4]{Liu-Ren-2023-JFA}. For brevity, we omit the details here.
\end{proof}

\begin{lemma}\label{lem 2.3}
    Let $\lam=(\lam_1,\cdots,\lam_n)\in\tilde\Gamma_k$, $1\leq k\leq n$. Assume that $S_k\geq \tau_0$ for some positive constant $\tau_0$. Then, there exists $c_0>0$ depending on $n$, $k$ and $\tau_0$, such that 
    $$
    \si_{k-1}(\lam)\geq c_0, \quad \te{and} \quad \sum_{i=1}^n S_k^{ii}(\lam)\geq c_0.
    $$
\end{lemma}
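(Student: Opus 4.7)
The plan is to reduce both estimates to a single claim: a positive lower bound for $\si_{k-1}(\lam)$. Once $\si_{k-1}(\lam)\geq c_0'>0$ is in hand, the second bound follows immediately from the identity in Definition \ref{def 2.1}(5),
\[
\sum_{i=1}^n S_k^{ii}(\lam)=(n-k+1)\si_{k-1}(\lam)+\al(n-k+2)\si_{k-2}(\lam),
\]
together with the observation that $\si_{k-2}(\lam)\geq 0$ because $\lam\in\gaa_{k-1}\subseteq\gaa_{k-2}$. So the real content of the lemma is the lower bound on $\si_{k-1}(\lam)$.

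To produce that bound I would split into two cases according to the sign of $\si_k(\lam)$. If $\si_k(\lam)<\tau_0/2$, then the hypothesis $S_k(\lam)=\si_k(\lam)+\al\si_{k-1}(\lam)\geq \tau_0$ forces $\al\si_{k-1}(\lam)>\tau_0/2$, so $\si_{k-1}(\lam)>\tau_0/(2\al)$ immediately. If instead $\si_k(\lam)\geq \tau_0/2>0$, then combined with $\si_1(\lam),\dots,\si_{k-1}(\lam)>0$ coming from $\lam\in\gaa_{k-1}$, we actually have $\lam\in\gaa_k$, and the Newton-Maclaurin inequality
\[
\frac{\si_{k-1}(\lam)}{\binom{n}{k-1}}\geq \left(\frac{\si_k(\lam)}{\binom{n}{k}}\right)^{\frac{k-1}{k}}
\]
yields $\si_{k-1}(\lam)\geq c(n,k)(\tau_0/2)^{(k-1)/k}$. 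Taking $c_0'$ to be the minimum of the two resulting positive constants finishes this step, after which $c_0=\min\{c_0',(n-k+1)c_0'\}$ handles both inequalities simultaneously.

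I do not anticipate any real obstacle here. The whole argument is a bookkeeping dichotomy on the decomposition $S_k=\si_k+\al\si_{k-1}$: either $\si_{k-1}$ must carry most of the mass of $S_k$ and is bounded below for free, or $\si_k$ is already bounded below by a positive constant, the admissibility cone $\tilde\gaa_k$ sits inside the full G\aa rding cone $\gaa_k$, and the classical Newton-Maclaurin inequality does the rest.
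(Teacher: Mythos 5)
Your argument is correct and essentially the same as the paper's: both split according to the sign (or size) of $\si_k$, use the Newton--Maclaurin inequality when $\si_k$ is bounded below by a positive constant and the term $\al\si_{k-1}$ otherwise, and then deduce the bound on $\sum_{i=1}^n S_k^{ii}$ from the identity in Definition \ref{def 2.1}(5) together with $\si_{k-2}\geq 0$. One cosmetic slip in your closing remark: the inclusion goes $\gaa_k\subseteq\tilde\gaa_k$, not the other way around; this does not affect your proof, since in the relevant case you correctly derive $\lam\in\gaa_k$ from $\si_k\geq\tau_0/2>0$ together with $\lam\in\gaa_{k-1}$.
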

\begin{proof}
    If $\si_k>0$, that is, $\lam \in \gaa_k$, then by the Newton-MacLaurin inequality, we have
    $$      
    \si_{k-1}\geq c(n,k) \si_{k}^{\frac{k-1}{k}}. 
    $$
    This implies 
    $$
    \tau_0 \leq S_k \leq \z(\frac{\si_{k-1}}{c(n,k)}\x)^{\frac{k}{k-1}}+\al \si_{k-1}\leq C(n,k) \max\z\{\si_{k-1}^{\frac{k}{k-1}},\si_{k-1}\x\}.
    $$
    If $\si_k\leq 0$, then $\tau_0\leq S_k \leq \al \si_{k-1}$. In both cases, we obtain
    $$
    \si_{k-1}\geq c_0(n,k,\tau_0)>0.
    $$
    \par
    From the definition of $S_k^{ii}$, it follows that
    $$
    \sum_{i} S_k^{ii}=(n-k+1)\si_{k-1}+\al (n-k+2)\si_{k-2} \geq c_0(n,k,\tau_0)>0.
    $$
\end{proof}

\begin{lemma}\label{lem 2.4}
    Let $\lam=(\lam_1,\cdots, \lam_n)\in \tilde{\gaa}_k$, $1\leq k\leq n$, and $\lam_1<0$. Then, we have 
    $$
    S_k^{11}(\lam)\geq \frac{1}{n-k+2}\sum_{i=1}^n S_k^{ii}(\lam):=c_{n,k}\sum_{i=1}^n S_k^{ii}(\lam).
    $$
\end{lemma}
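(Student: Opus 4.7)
The plan is to verify the inequality by direct algebraic expansion, isolating the dependence on the index $1$ via the standard recursion for elementary symmetric functions, and then closing with Gårding cone positivity together with the sign hypothesis $\lambda_1<0$.

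First I would rewrite the desired inequality as $(n-k+2)S_k^{11}(\lambda)-\sum_{i=1}^n S_k^{ii}(\lambda)\ge 0$. Using the formulas from Definition~\ref{def 2.1}, namely $S_k^{11}=\sigma_{k-1}(\lambda|1)+\alpha\sigma_{k-2}(\lambda|1)$ and $\sum_i S_k^{ii}=(n-k+1)\sigma_{k-1}(\lambda)+\alpha(n-k+2)\sigma_{k-2}(\lambda)$, this becomes
\begin{equation*}
(n-k+2)\bigl[\sigma_{k-1}(\lambda|1)+\alpha\sigma_{k-2}(\lambda|1)\bigr]-(n-k+1)\sigma_{k-1}(\lambda)-\alpha(n-k+2)\sigma_{k-2}(\lambda)\ge 0.
\end{equation*}
Applying the identity $\sigma_j(\lambda)=\sigma_j(\lambda|1)+\lambda_1\sigma_{j-1}(\lambda|1)$ for $j=k-1$ and $j=k-2$ and collecting terms, the $\sigma_{k-2}(\lambda|1)$ and the $(n-k+1)\sigma_{k-1}(\lambda|1)$ contributions cancel cleanly, reducing the left-hand side to
\begin{equation*}
\sigma_{k-1}(\lambda|1)-(n-k+1)\lambda_1\sigma_{k-2}(\lambda|1)-\alpha(n-k+2)\lambda_1\sigma_{k-3}(\lambda|1).
\end{equation*}

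To conclude, I would invoke the standard fact that $\lambda\in\tilde\Gamma_k\subseteq\Gamma_{k-1}$ implies $\lambda|1\in\Gamma_{k-2}$, so $\sigma_{k-2}(\lambda|1)\ge 0$ and $\sigma_{k-3}(\lambda|1)\ge 0$ (with the convention $\sigma_{-1}\equiv 0$ covering $k=2$). Moreover $\sigma_{k-1}(\lambda|1)=\sigma_{k-1}(\lambda)-\lambda_1\sigma_{k-2}(\lambda|1)>0$ because $\sigma_{k-1}(\lambda)>0$ and $-\lambda_1>0$. Since $-\lambda_1>0$ by hypothesis, the two terms containing $\lambda_1$ in the displayed expression are nonnegative, and the first term is strictly positive. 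This yields the claimed bound with the sharp constant $c_{n,k}=\frac{1}{n-k+2}$.

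There is no real obstacle here beyond bookkeeping; the only structural choice is to expand $\sigma_{k-1}(\lambda)$ and $\sigma_{k-2}(\lambda)$ around the deleted index $1$ so that every remaining quantity lives in $\Gamma_{k-2}$ and is therefore nonnegative. The mild edge case $k=2$ is automatic once one adopts the convention $\sigma_{-1}\equiv 0$, since then the $\alpha$-term in the final expression drops and the inequality becomes $\sigma_1(\lambda|1)-(n-1)\lambda_1\ge 0$, which is immediate from $\sigma_1(\lambda)>0$ and $\lambda_1<0$.
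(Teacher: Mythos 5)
Your proof is correct, and it is essentially the paper's argument in slightly different bookkeeping: both rest on the deletion identity $\sigma_j(\lambda)=\sigma_j(\lambda|1)+\lambda_1\sigma_{j-1}(\lambda|1)$, the sign hypothesis $\lambda_1<0$, and positivity of the relevant symmetric functions in the cone (the paper phrases this as $\sigma_i(\lambda)<\sigma_i(\lambda|1)$ and then bounds $\sigma_{k-1}+\alpha\sigma_{k-2}$ below by $\frac{1}{n-k+2}\sum_i S_k^{ii}$, while you expand the difference $(n-k+2)S_k^{11}-\sum_i S_k^{ii}$ directly into $(\lambda|1)$-quantities). Your handling of the $k=2$ edge case via the convention $\sigma_{-1}\equiv 0$ is fine and consistent with the paper.
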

\begin{proof}
    Since $\lam_1<0$, then $\si_i(\lam)=\lam_1 \si_{i-1} (\lam|1)+\si_i(\lam|1 )<\si_i(\lam|1)$ for $1\leq i\leq k-1$ and 
    \begin{align*}
        S_k^{11}& =\si_{k-1}(\lam|1)+\al \si_{k-2}(\lam|1)> \si_{k-1}+\al \si_{k-2} \\
        & \geq \frac{1}{n-k+2}\z[(n-k+1)\si_{k-1}+\al (n-k+2)\si_{k-2}\x]\\
        & = \frac{1}{n-k+2} \sum_{i} S_k^{ii}.
    \end{align*}
\end{proof}

Inspired by \cite[Lemma 2.7]{Chen-Zhang-Bull-2021}, we have the following lemma.
\begin{lemma}\label{lem 2.5}
    Let  $\lam=(\lam_1,\cdots,\lam_n)\in \tilde{\gaa}_k$, $k\geq 2$, and $\lam_2\geq \cdots \geq \lam_n$. For the following two cases: $(1)$ $n\geq k\geq 3$ and $\lam_1>0$, or $(2)$ $n>k=2$, $\lam_1\geq 1$ and $\lam_2> 0$. If, in addition,
    $$
    \lam_n<0, \quad \lam_1\geq \de \lam_2, \quad \te{and}\quad -\lam_n\geq \es \lam_1,
    $$ 
    for small positive constants $\de$ and $\es$, then we have
    \begin{equation}\label{eq S_k^11 geq theta_1 S_k^nn}
        S_k^{11}(\lam)\geq \ttt_1 S_k^{nn}(\lam),
    \end{equation}
    where $\ttt_1>0$ is a small constant depending on $n$, $k$, $\es$, and $\de$. 
\end{lemma}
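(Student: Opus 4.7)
My plan is to reduce the inequality to a simpler algebraic statement via a double expansion of $S_k^{11}$ and $S_k^{nn}$ along the indices $1$ and $n$. Setting $T_j := \si_j(\lam|1n)$ for $j \geq 0$ (with $T_j := 0$ for $j < 0$), $U := T_{k-1} + \al T_{k-2}$, and $V := T_{k-2} + \al T_{k-3}$, applying the splitting identity $\si_j(\mu|i) = \si_j(\mu|ij) + \mu_j \si_{j-1}(\mu|ij)$ twice yields
\[
S_k^{11}(\lam) = U + \lam_n V, \qquad S_k^{nn}(\lam) = U + \lam_1 V.
\]
The target \eqref{eq S_k^11 geq theta_1 S_k^nn} is then equivalent to $(1-\ttt_1)\, U \geq (|\lam_n| + \ttt_1 \lam_1)\, V$, which I would analyze by splitting on the sign of $V$.

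In the case $V \leq 0$ (which cannot occur when $k = 2$, since then $V = 1$), I would use that $\lam_n \leq 0$ is the smallest eigenvalue of $\lam$ to deduce $(\lam_1, \ldots, \lam_{n-1}) \in \gaa_{k-1}(\rr^{n-1})$; this follows by induction on $j \leq k-1$ from the identity $\si_j(\lam|n) = \si_j(\lam) + |\lam_n| \si_{j-1}(\lam|n)$. Consequently $S_k^{nn}(\lam) = S_{k-1}(\lam_1, \ldots, \lam_{n-1}) > 0$, and since $V \leq 0$ this forces $U > -\lam_1 V \geq 0$. The inequality then holds for every $\ttt_1 \in (0,1)$, its right-hand side being nonpositive and its left-hand side strictly positive.

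The main case is $V > 0$. Here I would combine the parallel expansion
\[
S_k(\lam) = (T_k + \al T_{k-1}) + (\lam_1 + \lam_n) U + \lam_1 \lam_n V
\]
with $S_k(\lam) > 0$ (from $\lam \in \tilde{\gaa}_k$) to extract the sharp upper bound
\[
|\lam_n| \;<\; \frac{(T_k + \al T_{k-1}) + \lam_1 U}{U + \lam_1 V} \;=\; \frac{S_k(\lam|n)}{S_k^{nn}(\lam)}.
\]
The assumption $\lam_1 \geq \de \lam_2$ forces $\lam_i \leq \lam_1/\de$ for each $2 \leq i \leq n-1$, which bounds every $T_j$ polynomially in $\lam_1$, while $|\lam_n| \geq \es \lam_1$ provides the strict positive separation needed. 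Combining these via Newton--MacLaurin-type estimates on the $(n-2)$-variable sub-tuple $(\lam_2, \ldots, \lam_{n-1})$ should yield a quantitative gap $U/V - |\lam_n| \geq c\, \lam_1$ with $c = c(n,k,\al,\de,\es) > 0$, which suffices to make $(1-\ttt_1) U \geq (|\lam_n| + \ttt_1 \lam_1) V$ hold for all small enough $\ttt_1 > 0$. The split between cases (1) and (2) reflects that in (2) the recursion terminates at $T_{-1} = 0$ and $V \equiv 1$; there the extra hypotheses $\lam_1 \geq 1$ and $\lam_2 > 0$ replace the higher-order Newton--MacLaurin machinery needed in (1).

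I expect the main technical obstacle to lie in the quantitative execution of the case $V > 0$: the upper bound on $|\lam_n|$ extracted from $S_k > 0$ is tight, so producing the explicit gap $U/V - |\lam_n| \gtrsim \lam_1$ requires a delicate interplay between the constraints $\lam_1 \geq \de \lam_2$, $|\lam_n| \geq \es \lam_1$, and the strict positivity of $\al > 0$, the last of which supplies a non-vanishing margin even when $\si_{k-1}(\lam|1n)$ degenerates.
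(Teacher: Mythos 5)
Your algebraic setup is correct: with $T_j=\si_j(\lam|1n)$, $U=S_{k-1}(\lam|1n)$, $V=S_{k-2}(\lam|1n)$ one indeed has $S_k^{11}=U+\lam_n V$, $S_k^{nn}=U+\lam_1 V$, and the reduction to $(1-\ttt_1)U\geq(|\lam_n|+\ttt_1\lam_1)V$ is right; the case $V\leq 0$ is handled correctly (in fact it is vacuous here, since $(\lam|n)\in\tilde{\gaa}_{k-1}$ and Lemma \ref{lem 2.2}~(c) force $V=S_k^{11,nn}>0$). But the entire content of the lemma sits in your main case $V>0$, and there you only assert the key estimate ("should yield a quantitative gap $U/V-|\lam_n|\geq c\,\lam_1$"); this gap, together with the comparison $S_k^{nn}\lesssim_\de \lam_1 V$ that you would also need (or at least $|\lam_n|\lesssim_{n,\de}\lam_1$, which you do not record), is essentially equivalent to the statement being proved, so nothing has actually been established.

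Moreover, the mechanism you propose for the gap cannot work uniformly. From $S_k(\lam)>0$ you extract $S_k^{11}>-S_k(\lam|1)/\lam_1$, i.e.\ a lower bound on $U-|\lam_n|V$ only when $S_k(\lam|1)=(T_k+\al T_{k-1})+\lam_n U$ is negative. But take $\lam_2=\cdots=\lam_{n-1}=\lam_1/\de$ and $\lam_n=-\es\lam_1$: this lies in $\tilde{\gaa}_k$, yet $|\lam_n|U\approx\es\de\,T_k\ll T_k+\al T_{k-1}$, so $S_k(\lam|1)>0$ and the bound obtained from $S_k(\lam)>0$ (equivalently your upper bound on $|\lam_n|$) is vacuous; no Newton--MacLaurin estimate on $(\lam_2,\dots,\lam_{n-1})$ can recover information that the constraint simply does not carry in this slack regime. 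This is exactly why the paper's proof proceeds by a dichotomy with threshold $\ttt=\frac{\es\de(k-1)}{2k(n-2)}$: either $S_k^{11}\geq\ttt(-\lam_n)S_k^{11,nn}$ already (which covers configurations like the one above and gives the conclusion via the preliminary inequality $\lam_1 S_k^{11,nn}\geq c(n,k)\de\,S_k^{nn}$, itself proved from Lemma \ref{lem 2.2}~(c)), or else one shows, splitting further on the sign of $\si_{k-1}(\lam|1)$ to control the inhomogeneous $\al$-term, that $S_k(\lam|1)\leq-\frac{\es}{2k}\lam_1(-\lam_n)S_k^{11,nn}$, and only then does the identity $S_k^{11}=(S_k-S_k(\lam|1))/\lam_1$ deliver the bound. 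Your proposal contains neither the dichotomy nor these estimates, so the decisive step of the lemma is missing.
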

\begin{proof}
    First, we claim that there exists a constant $c(n,k)>0$, such that 
    \begin{equation}\label{eq lam_1 S_k^11,nn geq c de S_k^nn}
        \lam_1 S_k^{11,nn}\geq  c(n,k) \de S_k^{nn}.
    \end{equation}
    
    For the case of $n>k=2$.
    \begin{align*}
        \de S_2^{nn}\leq S_2^{nn}=\sum_{i<n} \lam_i+\al \leq C \lam_1 =C \lam_1 S_2^{11,nn}. 
    \end{align*}
    For the case of $n\geq k\geq 3$. If $\lam_1\geq \lam_2$, by Lemma \ref{lem 2.2} $(c)$, we have
    \begin{align*}
        \lam_1 S_{k}^{11,nn}\geq \ttt_0 S_{k}^{nn}.
    \end{align*}
    If $\lam_1< \lam_2$, using again \eqref{eq 2.1},  we get 
    $$
    \lam_1 S_k^{11,nn}
    \geq \de \lam_2 S_k^{22,nn} \geq \de \ttt_0 S_k^{nn}.
    $$
    
    In the following, we divide into two cases to prove \eqref{eq S_k^11 geq theta_1 S_k^nn}. Denote $\ttt=\frac{\es \de (k-1)}{2k(n-2)}$.

    \textbf{Case 1:} $S_k^{11}\geq \ttt (-\lam_n )S_k^{11,nn}$. In this case, we have
    $$
    S_k^{11}\geq \ttt(-\lam_n)S_k^{11,nn}\geq \ttt \es \lam_1 S_k^{11,nn} \geq \es \ttt  \de c(n,k) S_k^{nn}.    
    $$

    \textbf{Case 2:} $S_k^{11}<\ttt(-\lam_n) S_k^{11,nn}$. In this case, we divide into two subcases to get the result.

    \textbf{Case 2a:} $\si_{k-1,1}> 0$. Then $(\lam|1)\in \gaa_{k-1}$ and 
    \begin{align*}
        S_{k,1} =& \si_{k,1}+\al \si_{k-1,1 } \\
        =& \frac{1}{k}\sum_{i>1} \lam_i \si_{k-1,1i}+\frac{\al}{k-1}\sum_{i>1} \lam_i \si_{k-2,1i}\\
        =& \frac{1}{k}\sum_{i>1} \lam_i \z[\si_{k-1,1}-\lam_i \si_{k-2,1i}\x]+\frac{\al}{k-1}\sum_{i>1} \lam_i \z[\si_{k-2,1}-\lam_i \si_{k-3,1i}\x] \\
        =& \sum_{i>1} \lam_i \z[\frac{1}{k}\si_{k-1,1}+\frac{\al}{k-1}\si_{k-2,1}\x] -\sum_{i>1} \lam_i^2 \z[\frac{1}{k}\si_{k-2,1i}+\frac{\al }{k-1}\si_{k-3,1i}\x] \\
        \leq &  \frac{(n-2)\lam_2}{k-1} \z[\si_{k-1,1}+\al \si_{k-2,1}\x] -\frac{\lam_n^2}{k} \z[\si_{k-2,1n}+\al \si_{k-3,1n}\x] \quad (\te{use } (\lam|1)\in \gaa_{k-1})  \\
        \leq & \frac{(n-2)\lam_1}{ (k-1) \de}S_k^{11}-\frac{\lam_n^2}{k}S_k^{11,nn} \\
        \leq & \frac{(n-2)\lam_1}{(k-1) \de} \ttt(-\lam_n) S_k^{11,nn} + \frac{\es \lam_1 \lam_n}{k}S_k^{11,nn}  \quad (\te{use Case 2}) \\
        = & -\frac{\es}{2k}\lam_1 (-\lam_n)S_k^{11,nn}<0.
    \end{align*}
    It follows from \eqref{eq lam_1 S_k^11,nn geq c de S_k^nn} that
    \begin{align}\label{eq 2.4}
        S_k^{11}=\frac{S_k-S_{k,1}}{\lam_1}\geq -\frac{S_{k,1}}{\lam_1}\geq \frac{\es}{2k}(-\lam_n)S_k^{11,nn}\geq \frac{\es^2}{2k}\lam_1 S_k^{11,nn} \geq \frac{\es^2 \de }{2k}c(n,k) S_k^{nn}.
    \end{align}

    \textbf{Case 2b:} $\si_{k-1,1}<0$. Then
    \begin{align*}
        S_{k,1}= &\si_{k,1}+\al \si_{k-1,1} \leq \si_{k,1}+\al \frac{k-1}{k}\si_{k-1,1} \quad (\te{use } \si_{k-1,1}<0) \\
        = & \frac{1}{k}\sum_{i>1} \lam_i \si_{k-1,1i}+\frac{\al}{k}\sum_{i>1} \lam_i \si_{k-2,1i}\\
        = & \frac{1}{k}\sum_{i>1} \lam_i \z[\si_{k-1,1}-\lam_i \si_{k-2,1i}\x]+\frac{\al}{k}\sum_{i>1} \lam_i \z[\si_{k-2,1}-\lam_i \si_{k-3,1i}\x] \\
        = & \frac{1}{k} \sum_{i>1} \lam_i \z[\si_{k-1,1}+ \al \si_{k-2,1}\x] -\frac{1}{k} \sum_{i>1} \lam_i^2 \z[\si_{k-2,1i}+\al \si_{k-3,1 i}\x] \\
        = & \frac{1}{k} \sum_{i>1} \lam_i S_k^{11}-\frac{1}{k}\sum_{i>1} \lam_i^2 S_k^{11,ii} \\
        \leq & \frac{n-2}{k}\lam_2 S_k^{11}-\frac{\lam_n^2}{k}S_k^{11,nn} \\
        \leq & \frac{(n-2)\lam_1}{ k \de} \ttt (-\lam_n) S_k^{11,nn}+\frac{\es}{k} \lam_1 \lam_n S_k^{11,nn} \quad (\te{use Case 2})\\
        = & \frac{k+1}{2 k^2}\es \lam_1 \lam_nS_k^{11,nn} \leq  -\frac{\es}{2k}\lam_1 (-\lam_n)S_k^{11,nn}<0.
    \end{align*}
    Thus, inequality \eqref{eq 2.4} also holds, and the proof is complete.
\end{proof}

In the following of this paper, we denote 
$$
S_k^{ij}(D^2 u)=\frac{\pa S_k(D^2 u)}{\pa u_{ij}} \quad \te{and} \quad S_k^{ij,pq}(D^2 u)=\frac{\pa^2 S_k(D^2 u)}{\pa u_{ij}\pa u_{pq}}.
$$

\begin{lemma}\label{lem 2.6}
    Let $u\in C^2(\overline{\ooo})$ be a function satisfying $\lam(D^2 u)\in \tilde{\gaa}_k$. Then 
    \begin{align}\label{eq 2.5}
        -\frac{\al }{n-k+1}\sum_{i=1}^n S_k^{ii}(D^2 u)\leq   \sum_{i=1}^n S_k^{ij}(D^2 u)u_{ij} \leq kS_k(D^2 u).
    \end{align}
\end{lemma}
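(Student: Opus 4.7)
The plan is to reduce both inequalities to algebraic identities already collected in Definition~\ref{def 2.1} together with the positivity of $\sigma_{k-1}$ and $\sigma_{k-2}$ on $\tilde\Gamma_k$. Since $\sum_{i,j} S_k^{ij}(D^2u)u_{ij}$ is invariant under orthogonal changes of coordinates, I would first rotate at the given point so that $D^2u$ becomes diagonal with entries $\lambda_1,\dots,\lambda_n$. Then the middle term collapses to $\sum_{i=1}^n S_k^{ii}(\lambda)\lambda_i$, and the whole statement becomes a pointwise inequality in the eigenvalues.

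For the upper bound, I would apply identity~(7) of Definition~\ref{def 2.1}, namely $\sum_i \lambda_i S_{k-1}(\lambda\mid i)=kS_k(\lambda)-\alpha\sigma_{k-1}(\lambda)$, combined with identity~(3), which says $S_k^{ii}(\lambda)=S_{k-1}(\lambda\mid i)$. This gives the exact formula
\[
\sum_{i=1}^n S_k^{ii}(\lambda)\lambda_i \;=\; kS_k(\lambda)-\alpha\sigma_{k-1}(\lambda).
\]
Because $\lambda\in\tilde\Gamma_k\subseteq\Gamma_{k-1}$, we have $\sigma_{k-1}(\lambda)>0$, so the right-hand side is bounded above by $kS_k(\lambda)$, which is the claimed upper estimate.

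For the lower bound, starting from the same identity I would rewrite
\[
\sum_{i=1}^n S_k^{ii}\lambda_i \;+\;\frac{\alpha}{n-k+1}\sum_{i=1}^n S_k^{ii}
\;=\; kS_k -\alpha\sigma_{k-1}+\frac{\alpha}{n-k+1}\sum_{i=1}^n S_k^{ii},
\]
and then plug in identity~(5), $\sum_i S_k^{ii}=(n-k+1)\sigma_{k-1}+\alpha(n-k+2)\sigma_{k-2}$. The $-\alpha\sigma_{k-1}$ term cancels exactly with the $\frac{\alpha}{n-k+1}\cdot(n-k+1)\sigma_{k-1}$ contribution, leaving
\[
kS_k(\lambda)+\frac{\alpha^2(n-k+2)}{n-k+1}\sigma_{k-2}(\lambda),
\]
which is non-negative because $\lambda\in\tilde\Gamma_k$ forces $S_k(\lambda)>0$ and $\lambda\in\Gamma_{k-1}\subseteq\Gamma_{k-2}$ forces $\sigma_{k-2}(\lambda)\geq 0$.

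There is essentially no analytic obstacle: the whole argument is bookkeeping with the symmetric-function identities already stated in Definition~\ref{def 2.1}. The only subtle point worth flagging explicitly is ensuring we exploit the full admissibility $\lambda\in\tilde\Gamma_k$ at the right moments—using $\sigma_k+\alpha\sigma_{k-1}>0$ to handle the leading $kS_k$ term and using the cone inclusion $\tilde\Gamma_k\subseteq\Gamma_{k-1}$ to guarantee $\sigma_{k-1}>0$ and $\sigma_{k-2}\geq 0$.
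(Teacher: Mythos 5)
Your argument is correct and is essentially the paper's proof: the paper likewise computes $\sum_{i,j}S_k^{ij}u_{ij}=k\sigma_k+\alpha(k-1)\sigma_{k-1}=kS_k-\alpha\sigma_{k-1}\leq kS_k$ and then adds $\frac{\alpha}{n-k+1}\sum_i S_k^{ii}$ via identity (5) to obtain exactly $kS_k+\frac{n-k+2}{n-k+1}\alpha^2\sigma_{k-2}>0$. Diagonalizing first and citing identity (7) instead of Euler homogeneity is only a cosmetic difference.
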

\begin{proof}
    By Definition \ref{def 2.1} $(5)$ and $\sum_{i,j}S_k^{ij}u_{ij}=k\si_{k}+\al(k-1)\si_{k-1}\leq kS_k$, we obtain
    \begin{equation}\label{eq S_k^ij u_ij+ S_k^ii}
        \sum_{i,j} S_k^{ij} u_{ij}+\frac{\al}{n-k+1}\sum_{i} S_k^{ii}=kS_k+\frac{n-k+2}{n-k+1}\al^2 \si_{k-2}>0.
    \end{equation}
    Thus, \eqref{eq 2.5} holds.
\end{proof}

\begin{lemma}[See \cite{Ball}]\label{lem ball}
    Let $B$ be a symmetric matrix and $A$ be a diagonal matrix with distinct eigenvalues. Denote the eigenvalues of $B$ by $\lam(B)$, and define a $C^2$ function $G$ by $G(B)=g(\lam(B))$, where $g$ is a symmetric function of the eigenvalues $\lam$. Then
    $$
    \z.\frac{dG(A+tB)}{dt}\x|_{t=0}=\sum_{i=1}^n g_i B_{ii},
    $$ 
    and
    \begin{align*}
        \z.\frac{d^2 G(A+tB)}{dt^2 }\x|_{t=0} = \sum_{i,j=1}^n g_{ij} B_{ii}B_{jj} + 2 \sum_{i < j} \frac{g_i -g_j}{\lam_i - \lam_j} B_{ij}^2,
    \end{align*}
    where $g_i=\frac{\pa g(\lam(A))}{\pa \lam_i}$ and $g_{ij}=\frac{\pa^2 g(\lam(A))}{\pa \lam_i \pa \lam_j}$. 
\end{lemma}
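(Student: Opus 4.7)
The strategy is to reduce the identity to an explicit computation of the first and second derivatives at $t = 0$ of the eigenvalues $\lambda_i(t)$ of the perturbed matrix $M(t) := A + tB$, and then apply the chain rule to the composition $g \circ \lambda$. Because $A$ is diagonal with \emph{distinct} eigenvalues, for $|t|$ sufficiently small the eigenvalues of $M(t)$ remain pairwise distinct, and analytic perturbation theory for simple eigenvalues of symmetric matrices (equivalently, the implicit function theorem applied to the characteristic polynomial together with a gauge normalization) produces $C^2$ branches $\lambda_i(t)$ and corresponding $C^2$ unit eigenvectors $v_i(t)$ with $v_i(0) = e_i$ and $\|v_i(t)\| \equiv 1$.

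For the first-order terms I would differentiate the eigenvalue equation $M(t) v_i(t) = \lambda_i(t) v_i(t)$ once and evaluate at $t = 0$. Taking the inner product of the resulting identity with $e_i$ and using $A e_i = \lambda_i e_i$ yields $\dot\lambda_i(0) = B_{ii}$; taking the inner product with $e_j$ for $j \ne i$ yields $\dot v_i(0)_j = B_{ij}/(\lambda_i - \lambda_j)$, while differentiating the normalization $\|v_i\|^2 \equiv 1$ forces $\dot v_i(0)_i = 0$. Next, differentiating the eigenvalue equation a second time, evaluating at $t=0$, and again testing against $e_i$ cancels the $\ddot v_i(0)$ contributions from both sides; using $\dot v_i(0)_i = 0$ together with the symmetry $B_{ij} = B_{ji}$ then gives
\[
\ddot\lambda_i(0) \;=\; 2 \sum_{j \ne i} \frac{B_{ij}^2}{\lambda_i - \lambda_j}.
\]

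The chain rule for $G(A+tB) = g(\lambda_1(t), \dots, \lambda_n(t))$ produces
\[
\left.\frac{d}{dt} G(A+tB)\right|_{t=0} = \sum_i g_i \dot\lambda_i(0), \qquad \left.\frac{d^2}{dt^2} G(A+tB)\right|_{t=0} = \sum_{i,j} g_{ij}\, \dot\lambda_i(0)\, \dot\lambda_j(0) + \sum_i g_i\, \ddot\lambda_i(0).
\]
Substituting the formula for $\dot\lambda_i(0)$ gives the first identity of the lemma immediately. For the second, the only remaining step is to symmetrize the off-diagonal sum $2 \sum_i g_i \sum_{j \ne i} B_{ij}^2/(\lambda_i - \lambda_j)$ by pairing the $(i,j)$ and $(j,i)$ contributions; using $B_{ij} = B_{ji}$ and $(\lambda_j - \lambda_i) = -(\lambda_i - \lambda_j)$ converts this to the stated $2 \sum_{i < j} (g_i - g_j) B_{ij}^2/(\lambda_i - \lambda_j)$.

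The main obstacle is justifying the smooth dependence of the eigenvalue and eigenvector branches on the parameter $t$, which is precisely where the distinctness hypothesis on $\lambda(A)$ enters. Without it the eigenvalues may collide at $t=0$ and produce only Lipschitz (not $C^2$) branches, and the poles $1/(\lambda_i - \lambda_j)$ appearing in the second-variation formula would be singular; under the simple-spectrum assumption, however, perturbation theory delivers the required analytic branches and every manipulation above is legitimate.
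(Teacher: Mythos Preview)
Your argument is correct and is exactly the standard eigenvalue perturbation computation that underlies this result. Note, however, that the paper does not actually supply its own proof of this lemma: it is stated with the attribution ``See \cite{Ball}'' and no argument is given, so there is nothing to compare your route against. Your approach---differentiating the eigenvalue equation $M(t)v_i(t)=\lambda_i(t)v_i(t)$ under the simple-spectrum hypothesis to extract $\dot\lambda_i(0)=B_{ii}$, $\dot v_i(0)_j=B_{ij}/(\lambda_i-\lambda_j)$, and $\ddot\lambda_i(0)=2\sum_{j\ne i}B_{ij}^2/(\lambda_i-\lambda_j)$, then applying the chain rule and symmetrizing---is precisely the proof one finds in Ball's paper and in standard references, and every step is justified.
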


\begin{lemma}[G\aa rding-type inequality for $S_k$]\label{lem garding-type ine}
    Let 
    $\lam(D^2 u) \in \tilde{\gaa}_k$ and $\lam(D^2 v)\in \gaa_k$. Then, 
    \begin{equation*}
        \frac{1}{k}S_k^{\frac{1}{k}-1}(D^2 u) \sum_{i,j=1}^n S_k^{ij}(D^2 u) v_{ij}\geq (S_k(D^2 u)+\si_k(D^2 v))^{\frac{1}{k}}-S_k(D^2 u)^{\frac{1}{k}}>0.
    \end{equation*}
    Moreover, we also have
    \begin{equation*}
        S_k(D^2 u)^{-1} \sum_{i,j=1}^n S_k^{ij}(D^2 u) v_{ij} \geq \log (S_k(D^2 u)+\si_k(D^2 v))-\log S_k(D^2 u)>0.
    \end{equation*}
\end{lemma}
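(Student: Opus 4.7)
The plan is to apply the concavity of $F := S_k^{\frac{1}{k}}$ on $\tilde{\gaa}_k$ (Lemma \ref{lem 2.2}$(b)$) along the segment from $D^2 u$ to $c D^2 v$ for a parameter $c > 0$, and then let $c \to \wq$ to extract a clean bound involving $\si_k(D^2 v)^{\frac{1}{k}}$. Since $\gaa_k$ is a convex cone with $\gaa_k \subseteq \tilde\gaa_k$, the point $cD^2 v$ lies in $\tilde\gaa_k$ for every $c > 0$; by convexity of $\tilde\gaa_k$ (Lemma \ref{lem 2.2}$(a)$) the whole segment stays inside $\tilde\gaa_k$, so concavity of $F$ gives the tangent inequality
\[
F(cD^2 v) \leq F(D^2 u) + \frac{1}{k} S_k^{\frac{1}{k}-1}(D^2 u) \sum_{i,j} S_k^{ij}(D^2 u)(cv_{ij}-u_{ij}).
\]

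The next step is to use the Euler-type identity $\sum_{i,j} S_k^{ij}(D^2 u) u_{ij} = k S_k(D^2 u) - \al \si_{k-1}(D^2 u)$ (an immediate consequence of Definition \ref{def 2.1}$(7)$) to cancel $F(D^2 u)$ against the $kS_k(D^2 u)$ contribution, then divide through by $c > 0$. Since $S_k^{\frac{1}{k}}(cD^2 v)/c = \left(\si_k(D^2 v) + \al \si_{k-1}(D^2 v)/c\right)^{\frac{1}{k}}$ and the residual $\al \si_{k-1}(D^2 u)$ correction divided by $c$ vanishes, passing $c \to \wq$ yields
\[
\si_k(D^2 v)^{\frac{1}{k}} \leq \frac{1}{k} S_k^{\frac{1}{k}-1}(D^2 u) \sum_{i,j} S_k^{ij}(D^2 u) v_{ij}.
\]
The first inequality of the lemma then follows from the elementary sub-additivity $(a+b)^{\frac{1}{k}} \leq a^{\frac{1}{k}} + b^{\frac{1}{k}}$ applied to $a = S_k(D^2 u)$, $b = \si_k(D^2 v)$, which gives $(S_k(D^2 u)+\si_k(D^2 v))^{\frac{1}{k}} - S_k^{\frac{1}{k}}(D^2 u) \leq \si_k(D^2 v)^{\frac{1}{k}}$; strict positivity of the middle quantity is immediate from $\si_k(D^2 v) > 0$. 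For the logarithmic version I would use the elementary one-variable bound $k[(1+y)^{\frac{1}{k}}-1] \geq \log(1+y)$ for $y \geq 0$ (verified via $f(y) := (1+y)^{\frac{1}{k}} - 1 - \frac{1}{k}\log(1+y)$ having $f(0)=0$ and $f'(y) = [(1+y)^{\frac{1}{k}}-1]/[k(1+y)] \geq 0$), substitute $y = \si_k(D^2 v)/S_k(D^2 u)$, and combine with the first inequality after multiplying through by $S_k^{1-\frac{1}{k}}(D^2 u)$.

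The main obstacle circumvented by the rescaling is that the naive approach---applying the tangent inequality directly at $t = 1$ along $D^2 u + tD^2 v$---would require both $D^2 u + D^2 v \in \tilde\gaa_k$ and the polynomial bound $S_k(D^2 u + D^2 v) \geq S_k(D^2 u) + \si_k(D^2 v)$. The latter is delicate for $D^2 u \in \tilde\gaa_k \setminus \gaa_k$, since the mixed-discriminant remainder $\si_k(D^2 u + D^2 v) - \si_k(D^2 u) - \si_k(D^2 v)$ can be negative and its sign must be rescued by the $\al\si_{k-1}$ correction. Taking $c \to \wq$ replaces that fragile comparison with a clean asymptotic one involving only $\si_k(D^2 v)^{\frac{1}{k}}$, after which the sub-additivity $(a+b)^{\frac{1}{k}}\leq a^{\frac{1}{k}}+b^{\frac{1}{k}}$ provides the slack needed to close the argument.
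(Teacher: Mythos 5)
Your proposal is correct, but it takes a genuinely different route from the paper. The paper first proves the matrix-level superadditivity $S_k(A+B)\geq S_k(A)+\si_k(B)$ for $A=D^2u$, $B=D^2v$, by combining the superadditivity of $\si_{k-1}^{\frac{1}{k-1}}$ with that of the quotient $\frac{\si_k}{\si_{k-1}}$ on $\gaa_{k-1}$ (both from concavity plus homogeneity), and only then applies the tangent inequality for the concave functions $S_k^{\frac1k}$ and $\log S_k$ at $D^2u$ evaluated at $D^2u+D^2v$. You instead apply the tangent inequality along the ray $cD^2v\in\gaa_k\subseteq\tilde\gaa_k$, use the Euler-type identity $\sum_{i,j}S_k^{ij}(D^2u)u_{ij}=kS_k(D^2u)-\al\si_{k-1}(D^2u)$, and let $c\to\wq$ to obtain the cleaner bound $\frac1kS_k^{\frac1k-1}(D^2u)\sum S_k^{ij}(D^2u)v_{ij}\geq\si_k(D^2v)^{\frac1k}$, after which the stated inequalities follow from the scalar facts $(a+b)^{\frac1k}\leq a^{\frac1k}+b^{\frac1k}$ and $k[(1+y)^{\frac1k}-1]\geq\log(1+y)$. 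Your version is more economical in its inputs: it needs only the concavity of $S_k^{\frac1k}$ (not of $\si_{k-1}^{\frac{1}{k-1}}$, $\frac{\si_k}{\si_{k-1}}$, or $\log S_k$), and it never has to verify that $D^2u+D^2v$ is admissible, whereas the segment to $cD^2v$ lies in the admissible set by convexity (note that, like the paper, you implicitly use convexity and concavity at the matrix level, not just for eigenvalue vectors). What the paper's route buys in exchange is the intermediate inequality $S_k(D^2u+D^2v)\geq S_k(D^2u)+\si_k(D^2v)$ itself, a stronger structural fact that your limiting argument does not recover. Two trivial bookkeeping remarks: the final multiplication factor in your log step should be $kS_k^{1-\frac1k}$ (equivalently, divide the first inequality by $\frac1kS_k^{\frac1k}$) rather than $S_k^{1-\frac1k}$, and the residual term $\frac{\al}{ck}S_k^{\frac1k-1}(D^2u)\si_{k-1}(D^2u)$ indeed vanishes as $c\to\wq$ since it is independent of $c$ apart from the prefactor; neither affects the validity of the argument.
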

\begin{proof}
    For convenience, we denote $A=D^2 u$ and $B=D^2 v$ and claim that 
    \begin{equation}\label{eq S_{k}(A+B)geq S_{k}(A)+sigma_{k}(B)}
        S_k(A+B)\geq S_k(A)+\si_{k}(B).
    \end{equation}

    Using Lemma \ref{lem ball} and the concavity of $(\si_{k-1})^{\frac{1}{k-1}}$ in $\gaa_{k-1}$,
    \begin{equation}\label{eq the concavity of si_{k-1}}
        (\si_{k-1}(A+B))^{\frac{1}{k-1}}\geq (\si_{k-1}(A))^{\frac{1}{k-1}}+(\si_{k-1}(B))^{\frac{1}{k-1}}.
    \end{equation}
    On the other hand, by the concavity of $\frac{\si_{k}}{\si_{k-1}}$ in $\gaa_{k-1}$, we have
    \begin{align}\label{eq the concavity of si_{k}/si_{k-1}}
        \frac{\si_k(A+B)}{\si_{k-1}(A+B)}+\al \geq \frac{\si_k(A)}{\si_{k-1}(A)}+\al+\frac{\si_k(B)}{\si_{k-1}(B)}=\frac{S_k(A)}{\si_{k-1}(A)}+\frac{\si_k(B)}{\si_{k-1}(B)}>0.
    \end{align}
    Thus, by combining \eqref{eq the concavity of si_{k-1}} and \eqref{eq the concavity of si_{k}/si_{k-1}}, we obtain
    \begin{align*}
        S_k(A+B)
        = & \si_{k-1}(A+B)\z[\frac{\si_k(A+B)}{\si_{k-1}(A+B)}+\al \x] \\
        \geq & [\si_{k-1}(A)+\si_{k-1}(B)] \z[\frac{S_k(A)}{\si_{k-1}(A)}+\frac{\si_k(B)}{\si_{k-1}(B)} \x] \\
        >& S_k(A)+\si_{k}(B)>0.
    \end{align*}
    Therefore, \eqref{eq S_{k}(A+B)geq S_{k}(A)+sigma_{k}(B)} holds.
    
    By applying \eqref{eq S_{k}(A+B)geq S_{k}(A)+sigma_{k}(B)} and the concavity of $S_k^{\frac{1}{k}}$ in $\tilde{\gaa}_k$. 
    \begin{align*}
        \frac{1}{k} S_k^{\frac{1}{k}-1} S_k^{ij}(A) B_{ij} \geq&  S_k^{\frac{1}{k}}(A+B)-S_k^{\frac{1}{k}}(A)\geq (S_k(A)+\si_{k}(B))^{\frac{1}{k}}-S_k^{\frac{1}{k}}(A)>0.
    \end{align*}
    Similarly, by the concavity of $\log S_k$ in $\tilde{\gaa}_k$, we have
    \begin{align*}
        S_k^{-1}(A) S_k^{ij}(A) B_{ij} \geq&  \log (S_k(A)+\si_{k}(B)) -\log S_k(A)>0.
    \end{align*}
\end{proof}

The following lemma shows that the condition on $\ooo$ for boundary double normal derivatives estimate (and indeed for the second derivatives estimate) can be relaxed from uniform convexity to uniform $(k-1)$-convexity and almost convexity.

\begin{lemma}\label{lem h=-d+Kd^2}
    Suppose $\ooo \subset \rr^n$ is a $C^2$ uniformly $(k-1)$-convex domain, i.e., $\si_{k-1}(\ka)\geq c_\ka$, and $u\in C^2(\overline{\ooo})$ satisfies $\lam(D^2 u)\in \tilde{\gaa}_k$. Let $d(x)=\operatorname{dist}(x,\pa \ooo)$
    , and define 
    $$
    h(x)=-d(x)+Kd^2(x), \quad \te{in }\ooo_\mu=\{x\in \ooo: d(x)<\mu\}.
    $$
    Then there exist $\mu>0$ small and $K>0$ depending on $n,k,\ooo$, such that 
    \begin{equation*}
        \sum_{i,j=1}^n S_k^{ij}(D^2 u)h_{ij}\geq c_1 \sum_{i=1}^n S_k^{ii}(D^2 u), \quad \te{in } \overline{\ooo_\mu}.
    \end{equation*}
    where the positive constant $c_1$ depends on $n$, $k$, $c_\ka$ and $\ooo$.
\end{lemma}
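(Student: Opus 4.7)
The plan is to localize near the boundary via Fermi coordinates. Choose $\mu>0$ so that $d\in C^{2}(\overline{\ooo_{\mu}})$, and for a fixed $x\in \ooo_{\mu}$ with nearest boundary point $y_{0}\in\pa\ooo$, take an orthonormal frame $\{e_{1},\dots,e_{n-1},e_{n}\}$ at $x$ with $e_{n}=\na d(x)$ and $e_{1},\dots,e_{n-1}$ aligned with the principal directions of $\pa\ooo$ at $y_{0}$. Standard identities for the distance function give $d_{i}=\de_{in}$, $d_{\al\bb}=-\ka_{\al}(1-d\ka_{\al})^{-1}\de_{\al\bb}$ for $\al,\bb<n$, and $d_{\al n}=d_{nn}=0$, so that $(h_{ij})=((2Kd-1)d_{ij}+2Kd_{i}d_{j})$ is diagonal in this frame with tangential entries $a_{\al}=(1-2Kd)\ka_{\al}/(1-d\ka_{\al})$ ($\al<n$) and normal entry $a_{n}=2K$.

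Next, I would verify that for $K=K(n,k,\ooo)$ sufficiently large and $\mu$ sufficiently small, $\lam(D^{2}h)\in \gaa_{k}$ and $\si_{k-1}(D^{2}h)\geq c_{\ka}/2$ throughout $\overline{\ooo_{\mu}}$. This follows from the uniform $(k-1)$-convexity $\si_{k-1}(\ka)\geq c_{\ka}$ together with the positivity of $\si_{j}(\ka)$ for $j\leq k-1$, combined with the expansion $\si_{j}(a_{1},\dots,a_{n-1},2K)=\si_{j}(a')+2K\si_{j-1}(a')$, where $a'=(a_{1},\dots,a_{n-1})$, and continuity in $d$.

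For the main estimate, since $(h_{ij})$ is diagonal in the Fermi frame, writing $F^{ij}=S_{k}^{ij}(D^{2}u)$ (a positive definite matrix sharing its eigenvectors with $D^{2}u$) and $F^{ii}$ for its diagonal entries in this frame,
\[
\sum_{i,j=1}^{n}S_{k}^{ij}(D^{2}u)h_{ij}=2K F^{nn}+\sum_{\al<n}F^{\al\al}a_{\al}.
\]
The plan is to bound this below by $c_{1}\operatorname{tr}F=c_{1}\sum_{i}F^{ii}$ via a case split based on whether $F^{nn}$ is a definite fraction of $\operatorname{tr}F$: in the first regime, the term $2K F^{nn}$ with $K$ large dominates the (bounded) negative contributions $\sum_{\al}F^{\al\al}\ka_{\al}$ arising from possibly negative $\ka_{\al}$; in the second regime $F$ is nearly concentrated on the tangent plane, and I would apply a G\aa rding-type inequality to the tangential block of $F$ against $\operatorname{diag}(\ka_{\al})$, using $\si_{k-1}(\ka)\geq c_{\ka}$ and the fact that $\ka\in\gaa_{k-1}(\rr^{n-1})$. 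The identity $\sum_{i}F^{ii}=(n-k+1)\si_{k-1}(D^{2}u)+\al(n-k+2)\si_{k-2}(D^{2}u)$ from Definition \ref{def 2.1}(5) provides a baseline positivity from the $\al$-term that facilitates this.

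The main obstacle I anticipate will be the tangential regime: G\aa rding-type inequalities (such as Lemma \ref{lem garding-type ine}) naturally produce factors of $S_{k}(D^{2}u)^{(k-1)/k}\si_{k-1}(D^{2}h)^{1/(k-1)}$ rather than a linear multiple of $\sum S_{k}^{ii}$. Bridging this gap requires careful use of the sum-Hessian decomposition $S_{k}=\si_{k}+\al\si_{k-1}$ on the tangential block of $F$ and exploiting the $\al\si_{k-1}$ contribution to $\sum F^{ii}$ to absorb the mismatched exponents.
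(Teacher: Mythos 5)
Your setup (Fermi/principal coordinates, the diagonal form of $D^2h$ with tangential entries $(1-2Kd)\ka_\al/(1-d\ka_\al)$ and normal entry $2K$, and the verification that $\lam(D^2h)$ is robustly $k$-convex for $K$ large, $\mu$ small) coincides with the paper's. The genuine gap is exactly the step you flag at the end: your tangential regime requires a \emph{quantitative} inequality of the form
\begin{equation*}
\sum_{\al<n} S_k^{\al\al}(D^2u)\,\ka_\al \;\geq\; c\sum_{i=1}^n S_k^{ii}(D^2u)
\qquad\text{whenever } S_k^{nn}(D^2u)\leq \es \sum_i S_k^{ii}(D^2u),
\end{equation*}
with $\ka$ only in $\gaa_{k-1}(\rr^{n-1})$, so some $\ka_\al$ may be negative and the sum is not termwise positive. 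This does not follow from Lemma \ref{lem garding-type ine}, which gives only positivity (or lower bounds expressed through $S_k(D^2u)$ and $\si_k$ of the test matrix, not through $\sum_i S_k^{ii}$), and your proposed bridge via the $\al\si_{k-1}$ contribution to $\sum_i S_k^{ii}$ is a heuristic, not an argument; as written, Case 2 is an unproven claim whose proof would need a nontrivial structural fact about how small an eigenvalue of $(S_k^{ij})$ can be relative to its trace on $\tilde\gaa_k$.

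The paper sidesteps this entirely, and you already have all the ingredients to do the same: do not apply the G\aa rding-type inequality to $h$ itself, but to the perturbed function $v=h-c_1|x|^2$. Using Maclaurin ($\si_i(\ka)\geq c(n,k)c_\ka$ for $i\leq k-2$), the choice $8K\mu\leq 1$, and the expansion $\si_k(D^2v)\geq 2(K-c_1)\si_{k-1}(\zeta')+\si_k(\zeta')$ for the tangential entries $\zeta'$, one checks $\lam(D^2v)\in\gaa_k$ in $\overline{\ooo_\mu}$ for $c_1$ small. Then Lemma \ref{lem garding-type ine} gives the mere positivity $\sum S_k^{ij}(D^2u)v_{ij}>0$, and since $(c_1|x|^2)_{ij}=2c_1\de_{ij}$ contributes exactly $2c_1\sum_i S_k^{ii}$, this positivity already yields $\sum S_k^{ij}h_{ij}> c_1\sum_i S_k^{ii}$ with no case split and no need for any quantitative version of G\aa rding's inequality. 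In short: replace your two-regime argument by the $-c_1|x|^2$ perturbation trick, and the obstacle you anticipate disappears.
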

\begin{proof}
    It follows from \cite[Section 14.6]{GT} that there exists a small constant $0<\mu<\frac{1}{10}$ depending on $\ooo$ such that $d$, $h \in C^2(\overline{\ooo_\mu})$.
    For any $x_0\in \ooo_\mu$, there exists $y_0\in \pa \ooo$ such that $|x_0-y_0|=d(x_0)$. In the principal coordinate system, we have
    \begin{align*}
        -Dd(x_0)&=\nu(y_0)=(0,\cdots,0,1),\\
        -D^2 d(x_0)& = \operatorname{diag}\z\{\frac{\ka_1(y_0)}{1-\ka_1(y_0)d(x_0)}, \cdots,\frac{\ka_{n-1}(y_0)}{1-\ka_{n-1}(y_0)d(x_0)},0\x\}, \\
        D^2 h(x_0)& = \operatorname{diag}\z\{\frac{(1-2Kd(x_0)) \ka_1(y_0)}{1-\ka_1(y_0)d(x_0)}, \cdots,\frac{(1-2Kd(x_0)) \ka_{n-1}(y_0)}{1-\ka_{n-1}(y_0)d(x_0)},2K\x\}.
    \end{align*}

    Since $\si_{k-1}(\ka)\geq c_{\ka}$ on $\pa \ooo$, the Maclaurin inequality yields for any $1\leq i\leq k-2$,
    $$
    \sigma_{i}(\ka)\geq c(n,k) \si_{k-1}(\ka)\geq c(n,k) c_\ka>0, \quad \te{on }\pa \ooo.
    $$
    By choosing $\mu=\mu(n,k,c_\ka,\ooo)>0$ sufficiently small and $K=K(n,k,c_\ka,\ooo)>0$ such that $8K\mu\leq 1$, we obtain in $\overline{\ooo_\mu}$
    $$
    \si_{k-1}(-D^2 d)\geq \frac{3}{4}c_{\ka}, \quad \te{and} \quad \si_{k-1}(D^2 h|nn)\geq (1-2Kd)^{k-1}\si_{k-1}(-D^2 d)\geq \frac{9}{16}c_\ka.
    $$
    Let $D^2 (h-c_1 |x|^2)=\operatorname{diag}\{\zeta_1,\cdots,\zeta_n\}$ and $\zeta'=(\zeta_1,\cdots,\zeta_{n-1})$. Then, in $\overline{\ooo_\mu}$, we have 
    \begin{align*}
        \si_{k}(D^2(h-c_1 |x|^2))
        \geq & 2(K-c_1)\si_{k-1}(\zeta')+\si_k(\zeta') \\
        \geq& c_\ka (K-c_1)-C(n,k,c_1)\geq \frac{c_\ka}{2}K,
    \end{align*}
    where $c_1>0$ is a sufficiently small constant depending on $n$, $k$, $\mu$, $c_\ka$, $K$ and $\ooo$. A similar argument shows that $h-c_1|x|^2\in \gaa_k$ for any $x\in \overline{\ooo_\mu}$.

    Applying Lemma \ref{lem garding-type ine} to $v=h-c_1 |x|^2$, we obtain
    \begin{align*}
        S_k^{-1}(D^2 u) S_k^{ij}(D^2 u) (h-c_1 |x|^2)_{ij} >0, \quad \te{in } \overline{\ooo_\mu},
    \end{align*}
    which implies 
    $$
    \sum_i S_k^{ij}(D^2 u) h_{ij}>c_1 \sum_i S_k^{ii}(D^2 u)  \quad \te{in } \overline{\ooo_\mu}.
    $$
\end{proof}

\section{$C^0$ and $C^1$ Estimates}\label{sec 3-C^0 and C^1}
\setcounter{equation}{0}

In this section we derive $C^0$ estimate and gradient estimate for the $(k-1)$-admissible solution to the Neumann problem \eqref{eq-Neumann for elliptic sum (thm 1.2)}.

\subsection{$C^0$ estimate}

Firstly, following the idea of \cite{Tru-1987}, we can easily obtain the following $C^0$ estimate.
\begin{theorem}\label{thm elliptic C^0}
    Let $\ooo \subset \rr^n$ be a $C^1$ domain, $f\in C(\overline{\ooo})$ satisfy $f>0$, and $\pp\in C(\pa \ooo)$ satisfy $\pp_u\leq c_\pp<0$. If $u\in C^2(\ooo) \cap C^1(\overline{\ooo}) $ is a $(k-1)$-admissible solution to \eqref{eq-Neumann for elliptic sum (thm 1.2)}, then
    $$
        \sup_{\ooo}|u|\leq M_0,
    $$
    where the positive constant $M_0$ depends on $n$, $k$, $c_{\pp}$, $\operatorname{diam}(\ooo)$, $|f|_{C^0}$, and $|\pp(\cdot,0)|_{C^0}$.
\end{theorem}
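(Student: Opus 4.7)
The plan is to bound $u$ from above and from below separately. For the upper bound I would use the subharmonicity that $(k-1)$-admissibility forces, reducing to a boundary analysis that exploits only the sign of $\pp_u$. For the lower bound I would construct an explicit radially symmetric quadratic sub-barrier and invoke a Neumann-type comparison principle whose crucial input is again the one-sided bound on $\pp_u$.

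\textbf{Upper bound.} Since $\tilde\gaa_k\subset\gaa_1$, any $(k-1)$-admissible solution satisfies $\Delta u=\sigma_1(\lam(D^2u))>0$ in $\ooo$, so the strong maximum principle places $M:=\max_{\overline\ooo}u$ at some $x_0\in\pa\ooo$. If $M\leq 0$ there is nothing to show; otherwise, integrating $\pp_u\leq c_\pp<0$ from $0$ to $M$ and using the Neumann condition (which forces $u_\nu(x_0)\geq0$ at the boundary maximum) gives
\[
    0\leq\pp(x_0,M)\leq\pp(x_0,0)+c_\pp M,
\]
hence $M\leq |\pp(\cdot,0)|_{C^0}/(-c_\pp)$.

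\textbf{Lower bound via sub-barrier.} Fix any $x_0\in\overline\ooo$ and set $d:=\operatorname{diam}(\ooo)$. For constants $A,B>0$ to be chosen, I would consider the radial quadratic $v(x):=\tfrac{A}{2}|x-x_0|^2-B$. Then $D^2v=A I$ makes $v$ trivially $(k-1)$-admissible with $S_k(D^2v)=\binom{n}{k}A^k+\al\binom{n}{k-1}A^{k-1}$. First I would pick $A=A(n,k,|f|_{C^0})$ so that $S_k(D^2v)\geq|f|_{C^0}\geq S_k(D^2u)$ in $\ooo$. Then, using $v_\nu\leq Ad$ on $\pa\ooo$ together with the lower bound $\pp(x,v)\geq\pp(x,0)+c_\pp v$ (valid, by integrating $\pp_u\leq c_\pp$, once $v<0$ on $\overline\ooo$, which is guaranteed by $B>Ad^2/2$), I would pick $B=B(n,k,c_\pp,d,|f|_{C^0},|\pp(\cdot,0)|_{C^0})$ large enough to enforce the Neumann inequality $v_\nu\leq\pp(\cdot,v)$ on $\pa\ooo$.

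\textbf{Comparison and obstacle.} Applying a Neumann comparison principle to $w:=v-u$ then yields $u\geq v\geq -B$. This principle is obtained by writing
\[
    S_k(D^2v)-S_k(D^2u)=\int_0^1 S_k^{ij}(D^2v_t)\,w_{ij}\,dt\geq 0,
\]
with $v_t=(1-t)u+tv\in\tilde\gaa_k$ (by convexity of $\tilde\gaa_k$, Lemma \ref{lem 2.2}(a)), which produces a linear uniformly elliptic inequality $Lw\geq 0$ in $\ooo$; the strong maximum principle and Hopf's lemma, combined with the boundary inequality $w_\nu\leq\pp(\cdot,v)-\pp(\cdot,u)\leq c_\pp w$, preclude $\max w>0$. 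The main subtlety is the coordination of $A$ and $B$ in the barrier construction: $A$ must be sufficiently large to dominate $f$, while $B$ must be large enough (yet compatibly with $A$ and $d$) both to keep $v<0$ on $\overline\ooo$ and to secure $v_\nu\leq\pp(\cdot,v)$ on $\pa\ooo$. Once this is arranged, the sign condition $\pp_u\leq c_\pp<0$ is precisely what fuels the comparison via Hopf's lemma and closes the $C^0$ estimate $|u|\leq M_0$.
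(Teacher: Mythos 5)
Your upper bound is exactly the paper's argument (subharmonicity pushes the positive maximum to the boundary, then $0\le u_\nu=\varphi(x_0,u)\le \varphi(x_0,0)+c_\varphi u(x_0)$). For the lower bound you take a genuinely different, slightly heavier route: you tune \emph{two} constants so that $v=\tfrac A2|x-x_0|^2-B$ is a full sub-barrier for the Neumann problem ($S_k(D^2v)\ge S_k(D^2u)$ in $\Omega$ \emph{and} $v_\nu\le\varphi(\cdot,v)$ on $\partial\Omega$), and then run a comparison principle for $w=v-u$. The paper is more economical: it only requires the interior inequality (with $v=A|x|^2$, so that $S_k(D^2v)>f$ strictly), concludes that $u-v$ attains its minimum at some $x_1\in\partial\Omega$, and then reads the bound for $u(x_1)$ directly from $0\ge(u-v)_\nu(x_1)=\varphi(x_1,u(x_1))-2Ax_1\cdot\nu$ together with $\varphi(x_1,u)\ge\varphi(x_1,0)+c_\varphi u(x_1)$; no boundary sub-barrier inequality and no separate constant $B$ are needed. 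Both routes are correct and yield the same dependence of $M_0$; yours buys a cleaner "comparison with a Neumann subsolution" statement at the cost of the extra barrier constraint, the paper's buys brevity.

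One caveat in your comparison step: invoking the strong maximum principle and Hopf's lemma is both more than you need and not clearly justified here. The domain is only $C^1$ (so an interior ball condition is not guaranteed), and the frozen coefficients $\int_0^1 S_k^{ij}(D^2v_t)\,dt$ need not be uniformly elliptic up to $\partial\Omega$, since $u$ is only $C^2$ in the open set $\Omega$. The repair is immediate and is what the paper does: choose $A$ so that the interior inequality is strict, $S_k(D^2v)>\sup f\ge S_k(D^2u)$; then an interior maximum of $w$ is impossible by the second-derivative test alone, so $\max_{\overline\Omega}w$ is attained at some $x_1\in\partial\Omega$, where $w\in C^1(\overline\Omega)$ gives $w_\nu(x_1)\ge0$, contradicting $w_\nu(x_1)\le\varphi(x_1,v)-\varphi(x_1,u)\le c_\varphi w(x_1)<0$ if $w(x_1)>0$. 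With that adjustment your argument closes correctly.
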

\begin{proof}
    Without loss of generality, we assume $\sup_{\ooo} u>0$ and $\inf_{\ooo} u<0$.
    Since $\dd u>0$, the (positive) maximum of $u$ is attained at $x_0 \in \pa \ooo$. At $x_0$, we have 
    $$
    0 \leq u_\nu (x_0) =\pp (x_0,u(x_0)) \leq \pp(x_0,0)+c_{\pp} u(x_0),
    $$ 
    which establishes the upper bound for $u$.

    To prove the lower bound for $u$, we define $v=A|x|^2\in \tilde{\gaa}_k$, where $A$ is a sufficiently large constant depending on $n$, $k$, and $|f|_{C^0}$. It follows that
    \begin{align*}
        a^{ij}(u-v)_{ij}=S_k(D^2 u)-S_k(D^2 v)\leq f-(2A)^k C_n^k<0,\quad \te{in }\ooo,
    \end{align*}
    where $(a^{ij})$ is positive definite. Consequently, $u - v$ attains its minimum at $x_1 \in \pa \ooo$. We may assume $u(x_1)<0$, otherwise, the lower bound for $u$ is easily obtained. Hence
    \begin{align*} 
        0 \geq (u -v)_{\nu}(x_1) =\pp(x_1,u(x_1)) -2A x_1 \cdot \nu 
        \geq \pp(x_1,0)+c_{\pp}u(x_1) - 2A \operatorname{diam}(\ooo).
    \end{align*}
    This provides the lower bound for $u$.
\end{proof}

\subsection{Gradient estimate}

In what follows, we establish the gradient estimate for sum Hessian equations with Neumann problems. First, we derive an interior gradient estimate.
Next, we obtain the near boundary gradient estimate in $\ooo_\mu$. 
Before proceeding, we define $M_0=\sup_{\overline{\ooo}}|u|$. 

Following the idea of Chou and Wang \cite{Chou-Wang-CPAM-2001}, we obtain the interior gradient estimate. 
\begin{theorem}\label{thm elliptic interior gradient}
    Let $\ooo$ be a domain in $\rr^n$, and let $f\in C^1(\overline{\ooo}\times [-M_0,M_0])$ satisfy $f>0$. Suppose $u\in C^3(\ooo)$ is a $(k-1)$-admissible solution to sum Hessian equation
    \begin{equation*}
        \si_k (D^2 u )+\al \si_{k-1}(D^2 u )=f(x,u), \quad \te{in } \ooo,
    \end{equation*}
    Then, the following interior estimate holds:
    \begin{equation}\label{eq 3.1}
        \sup_{\ooo \xg \ooo_\mu}|\na u|\leq M_1,
    \end{equation}
    where the positive constant $M_1$ depends on $n$, $k$, $\mu$, $M_0$, $\inf f$, and $|Df|_{C^0}$.
\end{theorem}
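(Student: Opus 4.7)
The strategy is a Chou--Wang cutoff maximum principle adapted to the sum Hessian operator through the algebraic inequalities of Section~\ref{sec 2-pre}. Fix $x_0 \in \overline{\ooo\xg\ooo_\mu}$; it is enough to bound $|\na u(x_0)|$ on a ball $B_R(x_0)\subset\ooo$ of radius $R\sim\mu$. I would introduce the auxiliary function
$$W(x) \;=\; \eta(x)^{\bb}\,e^{-Au(x)}\,|\na u(x)|^2, \qquad \eta(x) \;=\; 1 - \frac{|x-x_0|^2}{R^2},$$
with large constants $\bb$ and $A$ to be chosen in terms of $n,k,M_0,\inf f$ and $|f|_{C^1}$. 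Since $W$ vanishes on $\pa B_R(x_0)$, bounding $W$ reduces to estimating its value at an interior maximum point $x^* \in B_R(x_0)$.

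At $x^*$, after a rotation that diagonalizes $D^2u$ with eigenvalues $\lam_1,\dots,\lam_n$, the first-order condition $(\log W)_i(x^*)=0$ reads
$$u_i\Bigl(\tfrac{2\lam_i}{|\na u|^2}-A\Bigr) \;=\; -\bb\,\frac{\eta_i}{\eta}, \qquad i=1,\dots,n.$$
Applying $L = \sum S_k^{ii}\pa_{ii}$ to $\log W$, using the once-differentiated equation $\sum S_k^{ii} u_{iik} = f_{x_k} + f_u u_k$ to eliminate third derivatives, and substituting the first-order relation into the $-(\log|\na u|^2)_i(\log|\na u|^2)_j$ term, the inequality $L(\log W)(x^*)\leq 0$ reduces, modulo terms bounded by $C(1+|\na u|)$, to
$$0 \;\geq\; \bb\,L(\log\eta) \;-\; ALu \;+\; \tfrac{2}{|\na u|^2}\sum_i S_k^{ii}\lam_i^2 \;-\; \sum_i S_k^{ii}\bigl(Au_i - \bb\eta_i/\eta\bigr)^2.$$

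The heart of the argument is balancing these four terms. The cutoff contribution $\bb\,L(\log\eta) \lesssim -C\bb\sum S_k^{ii}/\eta$ is the principal bad term. By Lemma~\ref{lem 2.6}, $-ALu \geq -Akf$ is bounded below, and Lemma~\ref{lem 2.3} (applicable since $S_k = f \geq \inf f > 0$) gives $\sum S_k^{ii}\geq c_0 > 0$. Expanding the square $(Au_i - \bb\eta_i/\eta)^2$ and recombining, the dominant positive contribution becomes $A^2\sum S_k^{ii}u_i^2$ together with the curvature piece $\sum S_k^{ii}\lam_i^2/|\na u|^2$; for $A$ taken sufficiently large relative to $\bb$, these absorb the negative cutoff and $-ALu$ terms, yielding $|\na u(x^*)|^2\eta(x^*)^{\bb}\leq C$. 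Since $\eta(x_0)=1$, the interior gradient bound \eqref{eq 3.1} follows.

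The main obstacle is the asymmetry of the sum structure: the two-sided bound $-\frac{\al}{n-k+1}\sum S_k^{ii}\leq Lu \leq kf$ from Lemma~\ref{lem 2.6} is not as clean as the homogeneous identity $\sum\si_k^{ij}u_{ij}=k\si_k$ available for the pure $\si_k$ operator, so the constant $A$ must be tuned carefully to absorb the extra $\al/(n-k+1)$ factor produced by the $\si_{k-1}$ piece. A secondary subtlety is the possible degeneracy of the first-order identity when $2\lam_i/|\na u|^2 \approx A$ in some direction: in that regime, $u_i$ is not directly controlled and one must separate ``large'' from ``small'' eigenvalues via a case analysis, using Lemmas~\ref{lem 2.4}--\ref{lem 2.5} to redistribute the $S_k^{ii}$ weights across directions and ensure the positive curvature contribution remains effective.
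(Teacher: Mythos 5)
There is a genuine gap at the absorption step, and it is exactly the point where the degenerate ellipticity of $S_k$ on $\tilde\gaa_k$ bites. Your claim is that, for $A$ large, the terms $A^2\sum_i S_k^{ii}u_i^2$ and $|\na u|^{-2}\sum_i S_k^{ii}\lam_i^2$ absorb the cutoff term $\sim\bb\,\e^{-1}\sum_i S_k^{ii}$ (plus $-ALu$). But $\sum_i S_k^{ii}u_i^2$ is \emph{not} comparable to $|\na u|^2\sum_i S_k^{ii}$: at the maximum point the gradient may concentrate in a direction $e_1$ along which $S_k^{11}=S_{k-1}(\lam|1)$ is arbitrarily small relative to the trace $\sum_i S_k^{ii}$ (e.g.\ one very large eigenvalue), in which case $A^2 S_k^{11}u_1^2$ can be dwarfed by $\bb\,\e^{-1}\sum_i S_k^{ii}$ no matter how large the fixed constant $A$ is, since $A$ cannot depend on the solution. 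The curvature piece $\sum_i S_k^{ii}\lam_i^2/|\na u|^2$ has no a priori lower bound either. Your closing remark that one should ``separate large from small eigenvalues via a case analysis, using Lemmas \ref{lem 2.4}--\ref{lem 2.5}'' names the right lemma but not the mechanism: Lemma \ref{lem 2.4} only applies once you know the second derivative in the relevant direction is \emph{negative}, and Lemma \ref{lem 2.5} requires hypotheses ($\lam_n<0$, $\lam_1\geq\de\lam_2$, $-\lam_n\geq\es\lam_1$) that are not available at this point; it is used in the paper only for the boundary double-normal estimate.

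The paper closes precisely this gap by a different choice of test function, $G=|Du|\,\p(u)\,\rho$ with $\p(u)=(3M_0-u)^{-1/2}$ and $\rho=\mu^2-|x-z_0|^2$ (working with $\log u_1+\log\p+\log\rho$ in coordinates where $u_1=|Du|$). Under the assumption that $|Du(z_0)|$ is large, the first-order condition at the interior maximum forces
\begin{equation*}
    \frac{u_{11}}{u_1}\leq -\frac{\p'}{2\p}\,u_1<0,
\end{equation*}
i.e.\ the pure second derivative in the gradient direction is negative. Only then does Lemma \ref{lem 2.4} give $S_k^{11}\geq \frac{1}{n-k+2}\sum_i S_k^{ii}$, and combined with the trace lower bound (Lemma \ref{lem 2.3}, valid since $S_k=f\geq\inf f>0$, corresponding to your use of it) the good term $S_k^{11}u_1^2/(64M_0^2)$ genuinely dominates all bad terms, which are at most linear in $u_1$ times $\sum_i S_k^{ii}$. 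Your proposal, as written, never produces this sign information (the exponential weight $e^{-Au}$ does not force $u_{11}<0$ at the maximum), so the decisive comparability $S_k^{11}\gtrsim\sum_i S_k^{ii}$ is unavailable and the maximum-principle inequality does not close. To repair it you would need to either switch to the paper's (Chou--Wang) test function or otherwise extract negativity of the second derivative in the gradient direction before invoking Lemma \ref{lem 2.4}.
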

\begin{proof}
    For any $z_0 \in \ooo\xg \ooo_{\mu}$, we define the auxiliary function
    \begin{equation*}
        G(x)=|Du| \p(u) \rho(x), \quad \te{in } B_\mu(z_0),
    \end{equation*}
    where $\p(u)=(3M_0-u)^{-\frac{1}{2}}$ and $\rho(x)=\mu^2-|x-z_0|^2$. Suppose $G$ attains its maximum at $x_0\in B_\mu(z_0)$. By rotating the coordinates, we may assume $u_1(x_0)=|Du(x_0)|>0$ and $\{u_{ij}(x_0)\}_{2\leq i,j\leq n}$ is diagonal. 
    Consequently, the function
    $$
    \PPP(x)=\log u_1+\log\p(u)+\log \rho(x)
    $$
    attains local maximum at $x_0\in B_\mu(z_0)$. We further denote
    $$
    \widetilde{\lam}=(\widetilde\lam_2,\cdots,\widetilde\lam_n)=(u_{22}(x_0), \cdots,u_{nn}(x_0)).
    $$
    
    All subsequent calculations are evaluated at $x_0$. At $x_0$,
    \begin{align}
        0= \PPP_i = & \frac{u_{1i}}{u_1}+\frac{\p'}{\p}u_i +\frac{\rho_i}{\rho}, \label{eq 3.2}\\
        0\geq  S_k^{ij} \PPP_{ij}=&  \frac{S_k^{ij} u_{1ij}}{u_1}-S_k^{ij} \frac{u_{1i}u_{1j}}{u_1^2}+\z[\frac{\p''}{\p} -\frac{(\p')^2}{\p^2}\x]S_k^{11} u_1^2  \label{eq 3.3} \\
        & +\frac{\p'}{\p}S_k^{ij} u_{ij} +\frac{S_k^{ij}\rho_{ij}}{\rho}-S_k^{ij} \frac{\rho_i \rho_j}{\rho^2}. \nonumber
    \end{align}
    First, we have $S_{k}^{ij}u_{ij1}=f_{x_1}+f_u u_1$. From \eqref{eq 3.2}, it follows that 
    \begin{align*}
        -S_k^{ij}\frac{u_{1i}u_{1j}}{u_1^2}
        \geq - \frac{(\p')^2}{\p^2} S_k^{11}u_1^2 -2 \sum_{i} S_k^{ii}\frac{\p'u_1}{\p}\frac{|D\rho|}{\rho}-\sum_{i} S_k^{ij}\frac{\rho_i\rho_j}{\rho^2}.
    \end{align*}
    
    Using the inequality $0<\frac{\p'}{\p}\leq \frac{1}{4M_0}$ and \eqref{eq S_k^ij u_ij+ S_k^ii}, we obtain
    \begin{align*}
        \frac{\p'}{\p}\sum_{i,j} S_k^{ij}u_{ij}\geq -\frac{c_{n,k}}{M_0}\sum_{i} S_k^{ii}.
    \end{align*}
    Therefore, \eqref{eq 3.3} becomes 
    \begin{align}\label{eq 3.4}
        0\geq -\frac{|f_{x_1}|}{u_1}-|f_u|+S_k^{11}\frac{u_1^2}{64 M_0^2}-\z[\frac{u_1}{2M_0}\frac{\mu}{\rho}+\frac{c_{n,k}}{M_0} +\frac{2}{\rho}+8\frac{\mu^2 }{\rho^2 } \x]\sum_{i}S_k^{ii}.
    \end{align}

    Without loss of generality, we may assume that 
    \begin{equation}\label{eq 3.5}
        |Du(z_0)|\geq 24\sqrt{2}\frac{M_0}{\mu}.
    \end{equation}
    Then, by $G(x_0)\geq G(z_0)$, we have 
    $$
    u_1(x_0)\rho(x_0)=\frac{G(x_0)}{\p(u(x_0))}\geq\frac{G(z_0)}{\p(u(x_0))}\geq 24M_0 \mu\geq  2\frac{\p}{\p'}|D\rho|(x_0).
    $$
    It follows form \eqref{eq 3.2} that 
    $$
    \frac{u_{11}}{u_1}(x_0)=-\z(\frac{\p'}{\p}u_1+\frac{\rho_1}{\rho}\x)\leq -\frac{\p'}{2\p}u_1<0.
    $$

    Hence, by Lemma \ref{lem 2.4} and \cite[(3.10)]{Chou-Wang-CPAM-2001},  
    \begin{equation}\label{eq 3.6}
        S_k^{11}\geq \frac{1}{n-k+2}\sum_{i} S_k^{ii}\geq c_0(n,k,\inf f)>0.
    \end{equation}
    
    In the end, by combining \eqref{eq 3.4} with \eqref{eq 3.6}, we have
    \begin{align*}
        0\geq -\frac{|f_{x_1}|}{u_1}-|f_u|+\z[\frac{c_{n,k}}{M_0^2}u_1^2-\frac{\mu}{2M_0}\frac{u_1}{\rho}-\frac{c_{n,k}}{M_0} -\frac{2}{\rho}-8\frac{\mu^2 }{\rho^2 } \x]c_0.
    \end{align*}
    This implies that $\rho(x_0)u_{1}(x_0)\leq \widetilde{C}(n,k,\mu,M_0,\inf f ,|Df|_{C^0})$. Consequently, we have
    $$
    |Du(z_0)|\leq \frac{\p(u(x_0))}{\p(u(z_0))}\frac{1}{\rho(z_0)}\rho(x_0)u_1(x_0)\leq \frac{2\widetilde{C}}{\mu^2}:=M_1.
    $$
    Since $z_0\in \ooo\xg \ooo_{\mu}$ is arbitrary and $M_1$ is independent of $z_0$,  the proof is complete.
\end{proof}

Following the idea of Wang \cite{Wang Peihe-2022}, we can obtain the near boundary gradient estimate for the general setting, i.e., the oblique boundary value problem. 

\begin{theorem}\label{thm elliptic near boundary gradient}
    Let $\ooo\subset \rr^n $ be a $C^3$ domain, $f\in C^1(\overline{\ooo}\times [-M_0,M_0])$ satisfy $f>0$, and $\pp\in C^3(\pa \ooo\times [-M_0,M_0])$. If $u \in C^3(\ooo)\cap C^2(\overline{\ooo})$ is the $(k-1)$-admissible solution to  
    \begin{equation*}
        \z\{\begin{aligned}
            & \si_k(D^2 u)+\al \si_{k-1}(D^2 u)=f(x,u)&&  \te{in }\ooo, \\
            & u_\bb=\pp(x,u)&&  \te{on }\pa \ooo,
        \end{aligned}\x.
    \end{equation*}
    where $\bb\in C^{3}(\pa \ooo)$ is a unit vector field of $\pa \ooo$ with $\z<\bb,\nu\x>\geq \de_0$ for some positive constant $\de_0$. Here, $\nu$ denotes the outer unit normal vector field of $\pa \ooo$. Then there exists a small positive constant $\mu$, depending on $n,k,\de_0,\ooo,M_0,\inf f,|f|_{C^1},|\pp|_{C^3}$, and $|\bb|_{C^3}$, such that
    \begin{align}\label{eq 3.7}
        \sup_{\ooo_{\mu}} |D u| \leq \max\{M_1,M_2\},
    \end{align}
    where $M_1$ is defined in Theorem $\ref{thm elliptic interior gradient}$, and the positive constant $M_2$ depends on $n$, $k$, $\mu$, $\de_0$, $\ooo$, $M_0$, $\inf f$, $|f|_{C^1}$, $|\pp|_{C^3}$, and $|\bb|_{C^3}$.
\end{theorem}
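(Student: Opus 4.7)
The approach follows Wang \cite{Wang Peihe-2022}. Extend $\bb$ and $\pp$ smoothly to $\overline{\ooo_\mu}$ and introduce the oblique slack
\begin{equation*}
v(x) := \sum_{l=1}^{n}\bb^{l}(x)\,u_{l}(x) - \pp(x,u(x)),
\end{equation*}
which vanishes on $\pa\ooo$. On $\overline{\ooo_\mu}$ I would consider the auxiliary function
\begin{equation*}
W(x) \;=\; \log\bigl(|Du|^{2}+1\bigr) \;+\; A(M_{0}-u) \;+\; \Lambda\,h(x) \;+\; L\,v(x),
\end{equation*}
where $h$ is supplied by Lemma \ref{lem h=-d+Kd^2} and $A,\Lambda,L>0$ are constants to be chosen. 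Let $x_{0}\in\overline{\ooo_\mu}$ be a maximum point of $W$. If $d(x_{0})=\mu$, i.e., $x_{0}\in\pa\ooo_\mu\setminus\pa\ooo$, then Theorem \ref{thm elliptic interior gradient} yields $|Du(x_{0})|\le M_{1}$, so the inequality $W\le W(x_{0})$ in $\overline{\ooo_\mu}$ gives a bound on $|Du|$ throughout $\ooo_\mu$ depending only on $M_{1}$ and the chosen constants.

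If $x_{0}\in\pa\ooo$, then $v(x_{0})=h(x_{0})=0$, the tangential derivatives of $W$ vanish at $x_{0}$, and since moving in direction $-\nu$ enters $\ooo_\mu$ and decreases $W$, I obtain $W_{\bb}(x_{0})=\langle\bb,\nu\rangle\,W_{\nu}(x_{0})\ge \de_{0}W_{\nu}(x_{0})\ge 0$. The term $\Lambda h_{\bb}(x_{0})=\Lambda\langle\bb,\nu\rangle\ge\Lambda\de_{0}$ is large and positive and $-Au_{\bb}=-A\pp$ is bounded; the mixed pieces $\sum_{l}\bb^{l}u_{l\tau}$ for tangential $\tau$ are controlled by differentiating $u_{\bb}=\pp$ tangentially; and the residual second-normal contribution coming from $Lv_{\bb}$ and $\pa_{\bb}\log(|Du|^{2}+1)$ is handled by fixing $L$ to cancel it. Taking $\Lambda$ large enough thereafter, $W_{\bb}(x_{0})\ge 0$ is violated unless $|Du(x_{0})|\le M_{2}$.

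Finally, suppose $x_{0}$ is interior to $\ooo_\mu$. Then $DW(x_{0})=0$ and $\sum_{i,j}S_{k}^{ij}(D^{2}u)W_{ij}(x_{0})\le 0$. Differentiating the PDE gives $\sum_{i,j}S_{k}^{ij}u_{ijl}=f_{x_{l}}+f_{u}u_{l}$, which eliminates the third-order contributions coming from both $\log(|Du|^{2}+1)$ and $Lv$. The remaining $L$-quadratic second-order pieces produced by $\bb^{l}_{i}u_{lj}$ etc.\ are absorbed into the positive quadratic term $\frac{2S_{k}^{ij}u_{li}u_{lj}}{|Du|^{2}+1}$ via Cauchy--Schwarz after substituting the critical-point identity $DW(x_{0})=0$. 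Lemma \ref{lem 2.6} bounds $-AS_{k}^{ij}u_{ij}$ from below by $-Akf$, and Lemma \ref{lem h=-d+Kd^2} combined with Lemma \ref{lem 2.3} supplies the decisive positive quantity $\Lambda S_{k}^{ij}h_{ij}\ge \Lambda c_{1}\sum_{i}S_{k}^{ii}\ge \Lambda c_{1}c_{0}$. Choosing $\mu$ small first, then $L$ and $A$ moderate, and finally $\Lambda$ sufficiently large, the positive contribution dominates unless $|Du(x_{0})|\le M_{2}$.

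The principal obstacle is the interplay in the cross term $LS_{k}^{ij}v_{ij}$: its $\bb$-dependent second-order content must be absorbed by the quadratic positive term $\frac{2S_{k}^{ij}u_{li}u_{lj}}{|Du|^{2}+1}$ together with the curvature-driven $\Lambda c_{1}\sum_{i}S_{k}^{ii}$. This is exactly where the uniform $(k-1)$-convexity hypothesis on $\ooo$ enters (through Lemma \ref{lem h=-d+Kd^2}), and the constant hierarchy $\mu\ll 1/L\ll 1/A\ll 1/\Lambda$ is what makes the boundary, interior-boundary and interior cases close simultaneously.
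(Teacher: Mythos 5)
Your proposal has several genuine gaps, and the mechanism it relies on is not the one that makes this estimate work. First, the theorem is stated for an arbitrary $C^3$ domain with an oblique condition; there is no convexity hypothesis, and the constants do not depend on any curvature bound. Your argument leans decisively on Lemma \ref{lem h=-d+Kd^2}, which requires uniform $(k-1)$-convexity, so at best you would prove a strictly weaker statement; the paper's proof of the near-boundary gradient estimate never invokes that lemma or any convexity of $\ooo$. Second, the boundary case has a sign error: since $h=-d+Kd^2$ satisfies $Dh=\nu$ on $\pa\ooo$, the term $\Lambda h_\bb(x_0)\geq \Lambda\de_0$ is a large \emph{positive} contribution to $W_\bb(x_0)$, which reinforces the inequality $W_\bb(x_0)\geq 0$ coming from the maximum rather than contradicting it; to force a contradiction at a boundary maximum you need a negative normal contribution (as in the paper's $+\al_0 d$, with $d_\nu=-1$), but flipping the sign of your $h$-term destroys the interior positivity $S_k^{ij}h_{ij}\geq c_1\sum_i S_k^{ii}$ you are counting on.

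Third, and most seriously, the interior case does not close. Expanding $L\,S_k^{ij}v_{ij}$ produces, among others, the term $L\,\pp_{uu}S_k^{ij}u_iu_j$ and, after your Cauchy--Schwarz absorption of $L\,S_k^{ij}\bb^l_{\,i}u_{lj}$ into $\tfrac{2}{|Du|^2+1}S_k^{ij}u_{li}u_{lj}$, a residue of size $CL^2(|Du|^2+1)\sum_i S_k^{ii}$; both are of order $|Du|^2\sum_i S_k^{ii}$. Your auxiliary function contains no positive term of that order: $A(M_0-u)$ is linear, so it only contributes $-A\,S_k^{ij}u_{ij}\geq -AkS_k$ (bounded by Lemma \ref{lem 2.6}); the Hessian-quadratic term has no guaranteed lower bound proportional to $|Du|^2\sum_i S_k^{ii}$; and $\Lambda c_1\sum_i S_k^{ii}$ is independent of $|Du|$, so "choosing $\Lambda$ large" cannot dominate gradient-quadratic errors without circularity. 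The paper's proof manufactures exactly such a coercive term: it works with $\p=\sum_{i,j}(\de_{ij}-d_id_j)w_iw_j$ for the shifted function $w=u+\pp d/\cos\ttt$ and the test function $\log\p+g(u)+\al_0 d$ with $g(u)=-\tfrac12\log(1+M_0-u)$, shows at an interior maximum (with $|Dw|$ large) that $u_{11}<0$ so that Lemma \ref{lem 2.4} gives $S_k^{11}\geq c_{n,k}\sum_i S_k^{ii}$, and then the concavity surplus $[g''-\tfrac32(g')^2]S_k^{11}u_1^2\geq \tfrac{c_{n,k}|Dw|^2}{8(1+2M_0)^2}\sum_i S_k^{ii}$ dominates every error of the above type once $\mu$ is small. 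Your proposal is missing both of these ingredients (a strictly concavity-generating zeroth-order weight and the $u_{11}<0$ argument feeding Lemma \ref{lem 2.4}), as well as the preliminary reduction \eqref{eq 3.8}--\eqref{eq 3.10} that lets one bound $|Du|$ by the tangential quantity $\p$.
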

\begin{proof}
    We extend the unit vector field $\bb$ smoothly to $\ooo_\mu$ such that $\z<\bb,-Dd\x>=\cos \ttt\geq \de_0$ holds. Define
    \begin{equation*}
        w=u+\frac{\pp d}{\cos \ttt}, \te{ and } \p=|Dw|^2-(Dw \cdot Dd)^2=\sum_{i,j} (\de_{ij}-d_i d_j) w_i w_j\triangleq \sum_{i,j} C^{ij} w_i w_j.
    \end{equation*}
    We introduce the auxiliary function
    \begin{equation*}
        \varPhi =\log \p+g(u)+\al_0 d, \quad \te{in } \ooo_\mu,
    \end{equation*}
    where $g(u)=-\frac{1}{2} \log(1+M_0-u)$, and $\al_0>0$ is a constant to be determined.

    Note that the matrix $\{C^{ij}\}$ is positive semi-definite but not positive definite. Therefore, we need to show that the boundedness of $\p$ implies the boundedness of $|Du|$.
    
    First, using the auxiliary function $\chi=\log |Du|^2+g(u)$, we can establish the estimate
    \begin{equation}\label{eq 3.8}
        \sup_{\ooo} |Du|\leq C_1 (1+\sup_{\pa \ooo}|Du|),
    \end{equation} 
    where $C_1$ depends on $n$, $k$, $M_0$, $\inf f$, and $|Df|_{C^0}$. The proof of \eqref{eq 3.8} is standard and omitted.

    From $(27)-(32)$ of \cite{Wang Peihe-2022}, we have
    \begin{equation}\label{eq 3.9}
        \z|D_\nu w\x|^2 \leq |Dw|^2   \sin^2 \ttt, \quad \te{and} \quad \p\geq |Dw|^2  \cos^2 \ttt\geq \de_0^2 |Dw|^2  \quad \te{on } \pa \ooo.
    \end{equation}
    This implies that for any $x\in \pa \ooo$
    \begin{align}\label{eq 3.10}
        \p(x) \geq \de_0^2 \z|D u(x)-\frac{\pp\nu}{\cos \ttt}\x|^2\geq \frac{\de_0^2}{2}|Du(x)|^2-|\pp|^2_{C^0}.
    \end{align}
    Here, we use the inequality $(x-y)^2\geq \frac{x^2}{2}-y^2$. Combining \eqref{eq 3.8} and \eqref{eq 3.10} yields
    \begin{equation*}
        \sup_{\ooo} |Du|^2\leq C(n,k,\de_0,M_0,\inf f, |Df|_{C^0}, |\pp|_{C^0})(1+\sup_{\pa \ooo} \p).
    \end{equation*}

    From the above argument, it suffices to prove the bound for $\p$. Suppose the maximum of $\PP$ over $\overline{\ooo_\mu}$ is attained at $x_0$. By the interior gradient estimates (Theorem \ref{thm elliptic interior gradient}), it suffices to consider the following two cases.

    \textbf{Case 1:} $x_0 \in \pa \ooo$.
    The proof of this case relies solely on the Neumann boundary condition, and $\al_0=\al_0(\pa \ooo,\de_0,|\bb|_{C^2},|\pp|_{C^0})>0$ has been chosen appropriately. Thus, we omit the proof, see \cite[Theorem 4.1]{Wang Peihe-2022} for details.

    \textbf{Case 2:} $x_0 \in \ooo_\mu$.
    By \eqref{eq 3.8}, we may assume that $|Du|\big|_{\pa \ooo}$ attains its maximum at $x_1$. Furthermore, we assume that both $|Du(x_0)|$ and $|Du(x_1)|$ are sufficiently large, ensuring that $|D w(x_0)|$ is comparable to $|Du(x_0)|$, and $|D w(x_1)|$ is comparable to $|Du(x_1)|$. 
    Otherwise, the proof is complete.

    From \eqref{eq 3.8}, \eqref{eq 3.9} and $\PP(x_0)\geq \PP(x_1)$, we obtain 
    \begin{align}
        \p(x_0)\geq &  C_2 \p(x_1)\geq C_2 \de_0^2 |Dw(x_1)|^2\geq  \frac{1}{2}C_2 \de_0^2 |Du(x_1)|^2 \nonumber \\
        \geq & \frac{1}{2}C_2 \de_0^2 \z(\frac{1}{C_1}\sup_{\ooo}|Du|-1\x)^2 
        \geq   \frac{C_2\de_0^2}{4C_1^2}|Du(x_0)|^2 
        \geq  
        C_3 |Dw(x_0)|^2,  \label{eq 3.11}
    \end{align}
    where $C_2>0$ depends on $\al_0$, $\mu$, and $M_0$. 

    Now choose coordinates such that $D^2 u(x_0)$ is diagonal, all subsequent calculations are evaluated at $x_0$.
    
    For $1\leq j \leq n$, define ${T_j} = \sum_{i} {{C^{ij}}{w_i}}$ and let $\mathcal{T}=(T_1,\,T_2,\cdots,T_n)$. It is straightforward to verify that $|\mathcal{T}| \leq |Dw|$ and $ \phi=\langle\mathcal{T},\ Dw \rangle$. Moreover, from \eqref{eq 3.11}, we have 
    \begin{equation}\label{eq 3.12}
        C_{3}|Dw| \leq |\mathcal{T}| \leq |Dw|.
    \end{equation}

    Without loss of generality, we may assume that
    \begin{equation}\label{eq 3.13}
        T_1 w_1 \geq \frac{C_3 }{n}|D w|^2,\quad \te{and} \quad T_1>0,
    \end{equation}
    which implies
    \begin{equation*} 
        \frac{w_1}{T_1} \geq \frac{C_3}{n}.
    \end{equation*}
    It follows that $w_1$, $u_1$, $T_1$, $|Dw|$, $|Du|$, $|\mathcal{T}|$, and $\p^{\frac{1}{2}}$ are mutually comparable at $x_0$. Therefore,
    \begin{equation}\label{eq 3.14}
        \frac{{{u_1}}}{{{T_1}}} \geq \frac{C_3}{2n}.
    \end{equation}

    By direct calculation and the Cauchy inequality, 
    \begin{align}
        0=\PP_i =& \frac{\phi_i}{\phi} + g' u_i+\al_0 d_i,\label{eq thm 3.3 PP_i=0} \\
        0\geq S_k^{ii}\PP_{ii} =&  S_k^{ii}\frac{\p_{ii}}{\p}- S_k^{ii}(g'{u_i} + \al_0 {d_i})^2 +g' S_k^{ii}u_{ii} + g'' S_k^{ii}u_i^2 + \al_0 S_k^{ii}d_{ii}\nonumber \\
        \geq  & S_k^{ii}\frac{\p_{ii}}{\p} +\z[g''-\frac{3}{2}(g')^2\x] S_k^{ii} u_i^2+g' S_k^{ii} u_{ii}-2\al_0^2 S_k^{ii} d_i^2+\al_0 S_k^{ii} d_{ii}.\nonumber\\
        =& \operatorname{I} + \operatorname{II} + \operatorname{III} + \operatorname{IV}+\operatorname{V}. \label{eq thm 3.3 I+II+III+IV+v}
    \end{align}

    We first show that $u_{11}<0$.
    From \eqref{eq thm 3.3 PP_i=0} and the definitions of $\p$ and $T_l$, 
    \begin{equation}\label{eq 3.16}
        \sum_{l} T_l w_{l1}  = -\frac{\phi }{2}(g'{u_1} + \al_0 {d_1}) - \frac{1}{2} \sum_{i,j} {C^{ij}}_{,1}{w_i}{w_j},
    \end{equation}
    By direct calculation, we have
    \begin{align}\label{eq 3.17}
            w_i =& {u_i}\z(1 + \frac{\pp_u d}{\cos \ttt}\x) +  \frac{d}{\cos\theta} \pp_{x_i}+ \pp\z(\frac{d}{\cos\theta}\x)_i, \nonumber\\
            w_{ij} =& u_{ij}\z(1 + \frac{\pp_u d}{\cos \ttt}\x) + \frac{{{\pp_{uu}}d}}{{\cos \theta }}{u_i}{u_j} + \frac{\pp_{u x_j} d}{\cos\theta} u_i + \pp_u u_i \z(\frac{d}{{\cos \theta }}\x)_j + {\pp_u}{u_j}\z(\frac{d}{{\cos \theta }}\x)_i\\
            &+ \frac{ \pp_{x_i x_j} d }{\cos \theta } + \frac{\pp_{u x_i} d}{\cos\theta} u_j+ \z(\frac{d}{{\cos \theta }}\x)_i \pp_{x_j} + \z(\frac{d}{{\cos \theta }}\x)_j \pp_{x_i} + \pp \z(\frac{d}{{\cos \theta }}\x)_{ij}. \nonumber
    \end{align}
    Hence, by \eqref{eq 3.12}, \eqref{eq 3.14}, and \eqref{eq 3.17}, equality \eqref{eq 3.16} becomes
    \begin{align*}
        {u_{11}}\z(1+ \frac{\pp_u d}{\cos \theta }\x) \leq - \frac{{{u_1}}}{{2{T_1}}}g'\phi + C_4 (d|Dw|^2 + |Dw|) <0,
    \end{align*}
    provided $\mu>0$ is a sufficiently small constant. Thus, by Lemma \ref{lem 2.4}, 
    \begin{equation}\label{eq thm 3.3 S_k^11 geq sum S_k^ii}
        S_k^{11} \geq c_{n,k} \sum_{i} S_k^{ii}.
    \end{equation}

    We now deal with the last four terms. Using \eqref{eq thm 3.3 S_k^11 geq sum S_k^ii} and Lemma \ref{lem 2.6}, 
    \begin{equation}\label{eq 3.20}
        \begin{split}
            & \operatorname{II}\geq  \z[g''-\frac{3}{2}(g')^2\x]S_k^{11}u_1^2 \geq \frac{c_{n,k}|Dw|^2 }{8(1+2M_0)^2} \sum_{i=1}^n S_k^{ii},  \\
            & \operatorname{III}+\operatorname{IV}+ \operatorname{V} \geq -C (1+\al_0+\al_0^2) \sum_{i=1}^n S_k^{ii}:=-C_5 \sum_{i=1}^n S_k^{ii}.
        \end{split}
    \end{equation}
    
    For the term $\operatorname{I}$,
    \begin{align*}
        \p \operatorname{I}=&  \sum\limits_{i,p,q } S_k^{ii} {C^{pq}}_{,ii} w_p w_q+2\sum_{i,p,q } C^{pq} S_k^{ii} w_{iip} w_q \\
        & +4 \sum_{i,p,q} {S_k^{ii} {C^{pq}}_{,i} w_{ip} w_q}+2\sum_{i,p,q} S_k^{ii} C^{pq} w_{ip} w_{iq}\\
        =& \p[\operatorname{I}_1 + \operatorname{I}
        _2+\operatorname{I}_3+\operatorname{I}_4].
    \end{align*}
    We consider these four terms one by one in the following.

    For the term $\operatorname{I}_1$, it is easy to deduce that
    \begin{equation}\label{eq 3.21}
        \p \operatorname{I}_1 =   \sum_{i,p,q} S_k^{ii} {C^{pq}}_{,ii} w_p w_q \geq -C_6 |Dw|^2 \sum_{i} S_k^{ii}.
    \end{equation}

    For the term $\operatorname{I}_2$, we need more careful computations,
    \begin{align*}
        \phi \operatorname{I}_2=&2 \sum_{i,p,q} C^{pq}w_q S_k^{ii}w_{iip} = 2\sum_{i,p} T_p S_k^{ii} \z(u+\frac{{\pp d}}{{\cos \theta }}\x)_{iip} \\
        = & 2T_p (f_{x_p}+f_u u_p)+2\sum\limits_{i,p} T_p S_k^{ii}\z(\frac{{\pp d}}{{\cos \theta }}\x)_{iip}.
    \end{align*}
    To proceed, we should compute $(\frac{{\pp  d}}{{\cos \theta }})_{ijp}$. By direct calculation,
    \begin{align}\label{eq 3.22}
        \z(\frac{\pp (x,u) d }{\cos \theta }\x)_{ijp}=&  (\pp)_{ijp} \frac{ d}{\cos \theta } + (\pp)_{ij}\z(\frac{d}{\cos \theta }\x)_p + (\pp)_{ip}\z(\frac{d}{\cos \theta }\x)_j + (\pp)_{jp}\z(\frac{d}{\cos \theta}\x)_i \\
        &+ (\pp)_{i}\z(\frac{d}{\cos \theta }\x)_{jp} + (\pp)_{j} \z(\frac{d}{\cos \theta }\x)_{ip} +(\pp)_{p}\z(\frac{d}{\cos \theta}\x)_{ij} + \pp \z(\frac{d}{\cos \theta}\x)_{ijp}, \nonumber
    \end{align}
    where 
    \begin{align}\label{eq 3.23}
        {(\pp )_i} =& {\pp_{x_i}} + {\pp_u}{u_i}, \nonumber \\
        {(\pp )_{ij}} =&  {\pp_{x_ix_j}} + {\pp_{x_iu}}{u_j} + {\pp_{ux_j}}{u_i} + {\pp_{uu}}{u_i}{u_j} + {\pp_u}{u_{ij}}, \nonumber \\
        {(\pp )_{ijl}} =& {\pp_{x_ix_jx_l}} + {\pp_{x_ix_ju}}{u_l} + {\pp_{x_iux_l}}{u_j} + {\pp_{x_iuu}}{u_j}{u_l} + {\pp_{x_iu}}{u_{jl}} \\
        &+ {\pp_{ux_jx_l}}{u_i} + {\pp_{ux_ju}}{u_i}{u_l} + {\pp_{ux_j}}{u_{il}} \nonumber\\
        &+ {\pp_{uux_l}}{u_i}{u_j} + {\pp_{uuu}}{u_i}{u_j}{u_l} + {\pp_{uu}}{u_{il}}{u_j} + {\pp_{uu}}{u_i}{u_{jl}} \nonumber \\
        &+ {\pp_{ux_l}}{u_{ij}} + {\pp_{uu}}{u_l}{u_{ij}} + {\pp_u}{u_{ijl}}.\nonumber
    \end{align}
    Therefore  we have
    \begin{equation}\label{eq 3.24}
        \phi \operatorname{I}_2 \geq  -C_7 (|Dw|^3+d|Dw|^4)\sum_{i} S_k^{ii} -C_7 \z(|Dw|+d|Dw|^2\x)\z(1+\sum_{i} |S_k^{ii}u_{ii}|\x).
    \end{equation}
    Almost the same procedure, we can deal with the remained two terms.
    \begin{align}\label{eq 3.25}
        \phi \operatorname{I}_3 =& 4 \sum_{i,p,q} {C^{pq}}_{,i} w_q S_k^{ii} w_{ip} = 4\sum_{i,p,q} {C^{pq}}_{,i} w_q S_k^{ii}\z(u+\frac{\pp d}{\cos \ttt}\x)_{ip} \nonumber\\
        \geq & -C_8 (|Dw|^2+d|Dw|^3) \sum_{i} S_k^{ii} - C_8 |Dw|\sum\limits_{i} |S_k^{ii} u_{ii} |,
    \end{align}
    and
    \begin{align}\label{eq 3.26}
        \p \operatorname{I}_4 = & 2\sum_{i,p,q}  S_k^{ii} C^{pq} \z(u+\frac{\pp d}{\cos \ttt}\x)_{ip} \z(u+\frac{\pp d}{\cos \ttt}\x)_{iq} \nonumber\\
        =& 2\sum_{i} C^{ii}S_k^{ii}u_{ii}^2+4\sum_{i,p} C^{ip}S_k^{ii}u_{ii}\z(\frac{\pp d}{\cos \ttt}\x)_{ip}+2\sum_{i,p,q} S_k^{ii}C^{pq}\z(\frac{\pp d}{\cos \ttt}\x)_{ip}\z(\frac{\pp d}{\cos \ttt}\x)_{iq} \nonumber \\
        = & \p[\operatorname{I}_{41}+\operatorname{I}_{42}+\operatorname{I}_{43}].
    \end{align}

    Since $\{C^{pq}\}$ is positive semi-definite, $\p \operatorname{I}_{43}\geq 0$. Using \eqref{eq 3.23}, it follows that
    \begin{equation*}
        \p \operatorname{I}_{42}\geq -C_9(|Dw|+d|Dw|^2)\sum_{i} |S_k^{ii}u_{ii}|-4|\pp_u| \frac{d}{\cos \ttt} \sum_{i} C^{ii}S_k^{ii}u_{ii}^2.
    \end{equation*}
    Thus, if $\mu$ is sufficiently small, equation \eqref{eq 3.26} becomes
    \begin{equation}\label{eq 3.27}
        \p \operatorname{I}_4\geq \sum_{i} C^{ii}S_k^{ii}u_{ii}^2-C_9(|Dw|+d|Dw|^2)\sum_{i} |S_k^{ii}u_{ii}|.
    \end{equation}

    Taking into account \eqref{eq 3.21}, \eqref{eq 3.24}, \eqref{eq 3.25} and \eqref{eq 3.27}, and using Lemma \ref{lem 2.3}, we obtain
    \begin{align}\label{eq 3.28}
        \p \operatorname{I}\ge & \sum_{i} C^{ii} S_k^{ii} u_{ii}^2  -C_{10} (|Dw|+d|Dw|^2) \sum_{i} |S_k^{ii}u_{ii}| \\
        &-C_{10} (|Dw|^3+d|Dw|^4)\sum_{i} S_k^{ii}.\nonumber
    \end{align}
    
    In what follows, we further simplify the term $\operatorname{I}$. Let $C^{i_0 i_0}=\min_{i} C^{ii}\geq 0$. Then, for any $i\ne i_0$, we have $C^{ii}\geq \frac{1}{2}$. Otherwise, it follows that $\sum_{i} C^{ii}<\frac{1}{2}\times 2+(n-2)=n-1$, which contradicts the identity $\sum_{i=1}^n C^{ii}=n-1$. Using Lemmas \ref{lem 2.3} and \ref{lem 2.6}, 
    \begin{align*}
        |S_k^{i_0 i_0}u_{i_0 i_0}|
        \leq & kS_k +C_{n,k} \sum_{i} S_k^{ii} +\sum_{i\ne i_0} |S_k^{ii}u_{ii}|\leq C_{11} \sum_{i} S_k^{ii} +\sum_{i\ne i_0} |S_k^{ii}u_{ii}|.
    \end{align*}
    Plugging this into \eqref{eq 3.28} and applying the inequality $ax^2+bx\geq -\frac{b^2}{4a}$, we have
    \begin{align}\label{eq 3.29}
        \p \operatorname{I}
        \geq & \frac{1}{2}\sum_{i\ne i_0}S_k^{ii} \z[u_{ii}^2- 2  C_{12}(|Dw|+d|Dw|^2)|u_{ii}|\x] - C_{12}(|Dw|^3+d|Dw|^4) \sum_{i} S_k^{ii} \nonumber  \\
        \geq & -C_{13} (|Dw|^3+d|Dw|^4) \sum_{i} S_k^{ii}.
    \end{align}

    Combining \eqref{eq 3.20} with \eqref{eq 3.29}, we obtain
    \begin{equation*}
        0\geq \z[\z(\frac{c_{n,k}}{8(1+2M_0)^2}-C_{14}d \x)|Dw|^2-C_{14}|Dw|-C_5 \x] \sum_{i} S_k^{ii}.
    \end{equation*}
    Therefore, the proof is complete provided that $\mu>0$ is chosen sufficiently small.
\end{proof}

\section{Second-Order Derivatives Estimate}\label{sec 4-second derivative}

In this section, we primarily establish the global second derivative estimate for the Neumann problem
\begin{equation}\label{eq 4.1}
    \left\{\begin{aligned}
        & \si_k(D^2 u)+\al \si_{k-1}(D^2 u )=f(x,u) && \te{in } \ooo, \\
        & u_\nu =\pp(x,u) && \te{on } \pa \ooo,
    \end{aligned}\right.
\end{equation}
where $f\in C^2(\overline{\ooo}\times[-M_0,M_0])$ and $\pp\in C^3(\pa \ooo\times[-M_0,M_0])$ satisfy
\begin{equation}\label{eq 4.2}
    f>0, \quad \te{and} \quad \pp_{u}\leq c_{\pp}<0.
\end{equation}

The main theorem of this section is as follows.

\begin{theorem}\label{thm elliptic C^2 estimates}
    Let $\Omega \subset \mathbb{R}^n$ be a $C^4$ almost convex and uniformly $(k-1)$-convex domain, and let $f, \pp$ satisfy \eqref{eq 4.2}, with $a_\ka>\frac{1}{2}c_\pp$. If $u \in C^4(\Omega)\cap C^3(\overline \Omega)$ is the $(k-1)$-admissible solution to \eqref{eq 4.1}. Then 
    \begin{align*}
        \sup_{\Omega} |D^2 u|  \leq M_3,
    \end{align*}
    where the positive constant $M_3$ depends on $n, k, c_\pp, a_\ka, c_\ka, \Omega, |Du|_{C^0}, \inf f, |f|_{C^2}$, and $|\pp|_{C^3}$.

    In particular, when $\pp$ merely satisfies $\pp_u\leq 0$, we must require $\ooo$ to be uniformly convex. 
\end{theorem}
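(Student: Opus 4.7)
The plan is to reduce the global $C^2$ estimate to a boundary $C^2$ estimate and then bound the boundary second derivatives in three classical pieces---mixed tangential-normal, tangential-tangential, and double normal---making essential use of the structural lemmas of Section~\ref{sec 2-pre}.

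For the global reduction, I would introduce
\begin{equation*}
W(x,\xii)=\log u_{\xii\xii}(x)+a|Du(x)|^2+b\,u(x)
\end{equation*}
on $\overline{\ooo}\times S^{n-1}$ with constants $a,b$ to be chosen, and consider its maximum $(x_0,\xii_0)$. If $x_0$ is interior, rotating so that $\xii_0=e_1$ and $D^2u(x_0)$ is diagonal, then differentiating $S_k(D^2u)=f(x,u)$ twice in $e_1$ and invoking the concavity of $\log S_k$ on $\tilde{\gaa}_k$ (Lemma~\ref{lem 2.2}(b)) allows the dangerous third-order contribution $-S_k^{ij,pq}u_{ij1}u_{pq1}$ to be absorbed by the terms produced by the first-order maximum conditions on $W$. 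Together with Lemmas~\ref{lem 2.3}, \ref{lem 2.4} and \ref{lem 2.6}, this yields
\begin{equation*}
\sup_\ooo|D^2u|\leq C\bigl(1+\sup_{\pa\ooo}|D^2u|\bigr),
\end{equation*}
reducing everything to a boundary $C^2$ bound.

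At a boundary point $x_0$ I would work in adapted coordinates with outer normal $\nu(x_0)=-e_n$ and principal directions $\{e_\al\}_{\al<n}$. Differentiating $u_\nu=\pp(x,u)$ once tangentially yields immediately $u_{n\al}(x_0)=-\pp_{x_\al}-\pp_u u_\al$, bounded by $|\pp|_{C^1}$ and $|Du|_{C^0}$. For the tangential-tangential part, extend a tangent vector $\tau$ smoothly to $\overline{\ooo_\mu}$ and consider
\begin{equation*}
\Psi(x)=u_{\tau\tau}(x)+A'|Du(x)|^2+B'd(x)+C'h(x),
\end{equation*}
with $h$ the barrier from Lemma~\ref{lem h=-d+Kd^2} and $A',B',C'$ to be tuned, maximizing over $\overline{\ooo_\mu}$ and unit tangents. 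An interior maximum is handled as in the global-reduction step. At a boundary maximum, differentiating the boundary condition twice tangentially produces an identity of the form
\begin{equation*}
(u_{\tau\tau})_\nu(x_0)=(\pp_u-2\ka_\tau)\,u_{\tau\tau}(x_0)+\te{controlled terms},
\end{equation*}
the factor $2\ka_\tau$ arising because the second fundamental form contributes once through each tangential derivative. Hopf's lemma forces $\Psi_\nu(x_0)\geq 0$, and the hypothesis $2a_\ka>c_\pp$ ensures $2\ka_\tau-\pp_u\geq 2a_\ka-c_\pp>0$, giving the desired bound on $u_{\tau\tau}(x_0)$. The double-normal derivative $u_{nn}(x_0)$ is then recovered from the expansion
\begin{equation*}
f(x_0,u(x_0))=u_{nn}(x_0)\,S_{k-1}(D^2u|n)(x_0)+S_k(D^2u|n)(x_0)+\te{terms in }u_{n\al}
\end{equation*}
(Definition~\ref{def 2.1}(2)), solving for $u_{nn}$ once Lemma~\ref{lem 2.3} together with the uniform $(k-1)$-convexity of $\pa\ooo$ and the barrier argument of Lemma~\ref{lem h=-d+Kd^2} provides a uniform positive lower bound on $S_{k-1}(D^2u|n)$; the lower bound on $u_{nn}$ follows from the admissibility $\lam(D^2u)\in\tilde{\gaa}_k$ together with the tangential control already obtained.

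The hard part is the tangential-tangential estimate: with $\pa\ooo$ only almost convex, the coefficient $2\ka_\tau-\pp_u$ appearing in the Hopf computation is positive only when the potentially negative lower curvature bound is strictly compensated by the negativity of $\pp_u$, and the balance $2a_\ka>c_\pp$ is exactly this compensation. If one only assumes $\pp_u\leq 0$, this compensation disappears and one must instead require $\ka\geq\ga_\ka>0$, i.e., uniform convexity of $\pa\ooo$, consistent with the second assertion of the theorem.
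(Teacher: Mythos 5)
Your global reduction and your treatment of the mixed tangential--normal and tangential--tangential boundary derivatives run parallel to the paper's Lemma \ref{lem elliptic global C^2 reduce to boundary}, and your identification of the compensation $2a_\ka>c_\pp$ against the factor $\pp_u-2D_1\nu^1\leq c_\pp-2a_\ka<0$ is exactly the point made there; note, however, that this Hopf computation also produces a term $C(1+|u_{\nu\nu}|)$, so the tangential bound you obtain is only relative to the double normal derivative, not absolute. The genuine gap is your final step, the double normal estimate. You propose to solve the equation $f=u_{nn}\,S_{k-1}\bigl(\{u_{\al\bb}\}_{\al,\bb<n}\bigr)+\dots$ for $u_{nn}$, which requires a uniform positive lower bound on $S_{k-1}$ of the tangential $(n-1)\times(n-1)$ block of $D^2u$ on $\pa\ooo$. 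None of the tools you invoke supplies this: Lemma \ref{lem 2.3} bounds $\si_{k-1}(\lam(D^2u))$ for the full eigenvalue vector, not for the restricted tangential block; uniform $(k-1)$-convexity of $\pa\ooo$ is a condition on the principal curvatures $\ka$ of the boundary and, in the Neumann setting, does not transfer to the tangential Hessian of $u$ (unlike the Dirichlet problem, where the tangential Hessian is explicitly computable from the boundary data and the second fundamental form, which is why the CNS3 scheme you are imitating works there); and Lemma \ref{lem h=-d+Kd^2} is a statement about the barrier $h$, used for an entirely different purpose. Admissibility $\lam(D^2u)\in\tilde{\gaa}_k$ only forces the tangential $S_{k-1}$ to be asymptotically nonnegative as $u_{nn}\to\infty$, not bounded away from zero, so the equation cannot be solved for $u_{nn}$ in this way. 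This is precisely the obstruction that kept Trudinger's conjecture open for thirty years, and the paper circumvents it differently: Lemmas \ref{lem elliptic u_nu nu leq C} and \ref{lem elliptic u_nu nu geq -C} bound $u_{\nu\nu}$ from above and below via the Lions--Trudinger--Urbas/Ma--Qiu device, i.e.\ the auxiliary function $P(x)=(1+\bb d)\bigl[Du\cdot(-Dd)-\pp(x,u)\bigr]\mp\bigl(A+\tfrac{M}{2}\bigr)h(x)$ on the strip $\ooo_\mu$, shown by a delicate case analysis (using Lemma \ref{lem 2.5}, the inequality $S_k^{11}\geq\ttt_1 S_k^{nn}$, and the reduction bound $|D^2u|\leq M_4(1+M)$) to attain its extremum only on $\pa\ooo$, after which the sign of $P_\nu$ at the extremal point of $u_{\nu\nu}$ gives the estimate.

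A related structural problem: since your tangential estimate in fact carries the $C(1+|u_{\nu\nu}|)$ term, while your double normal step presupposes tangential control, the two steps as written feed into each other. The paper avoids this circularity by making the dependence explicit, $\sup_\ooo|D^2u|\leq M_4(1+\max_{\pa\ooo}|u_{\nu\nu}|)$, and then letting $M=\max_{\pa\ooo}|u_{\nu\nu}|$ enter the auxiliary function for the double normal bounds, where it is absorbed by choosing $A$ and $\bb$ large. For the same reason your claim that the lower bound on $u_{nn}$ ``follows from admissibility together with the tangential control already obtained'' does not close either; the paper needs the separate barrier argument of Lemma \ref{lem elliptic u_nu nu geq -C} for it.
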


By Lemma \ref{lem 2.5}, we may derive that second-order derivatives estimate which is similar to Ma-Qiu \cite{Ma-Qiu-2019-CPM} and Chen-Zhang \cite{Chen-Zhang-Bull-2021}, see also \cite{LTU}.

\subsection{Reduce global second estimates to boundary double normal derivatives}

\begin{lemma}\label{lem elliptic global C^2 reduce to boundary}
    Let $\Omega \subset \mathbb{R}^n$ be a $C^4$ almost convex domain, and let $f, \pp$ satisfy \eqref{eq 4.2}, with $a_{\ka}>\frac{1}{2}c_{\pp}$. If $u \in C^4(\Omega)\cap C^3(\overline \Omega)$ is the $(k-1)$-admissible solution to \eqref{eq 4.1}. Then 
    \begin{align}\label{eq 4.3}
        \sup_{\Omega} |D^2 u|  \leq M_4(1+ \max_{\pa \ooo} |u_{\nu \nu}|),
    \end{align}
    where the positive constant $M_4$ depends on $n$, $k$, $c_{\pp}$, $a_\ka$, $\Omega$, $|Du|_{C^0}$, $\inf f$, $|f|_{C^2}$, and $|\pp|_{C^3}$.

    In particular, when $\pp$ merely satisfies $\pp_u\leq 0$, we must require $\ooo$ to be uniformly convex. 
\end{lemma}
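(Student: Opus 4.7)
The plan is to adapt the auxiliary-function method of Lions-Trudinger-Urbas \cite{LTU} and Ma-Qiu \cite{Ma-Qiu-2019-CPM} to the operator $S_k=\si_k+\al\si_{k-1}$, leveraging the concavity and ellipticity facts collected in Section \ref{sec 2-pre}. I would form
\begin{equation*}
    W(x,\xii)=\log u_{\xii\xii}(x)+\p(|Du|^2/2)+\ppp(u(x)),
\end{equation*}
on $\overline{\ooo}\times S^{n-1}$, restricted to $\{u_{\xii\xii}>0\}$, where $\p$ and $\ppp$ are suitable monotone auxiliary functions to be chosen during the argument. Let $(x_0,\xii_0)$ maximize $W$ and rotate so that $\xii_0=e_1$ and $D^2u(x_0)$ is diagonal, with $u_{11}(x_0)=\lam_1$; it then suffices to bound $u_{11}(x_0)$.

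\emph{Interior case $x_0\in\ooo$.} The first-order conditions $W_i=0$ permit $u_{11i}/u_{11}$ to be expressed in lower-order data, while the second-order inequality $\sum_{i,j}S_k^{ij}W_{ij}\leq 0$ is expanded by differentiating \eqref{eq 4.1} once and twice in the direction $e_1$, producing $\s_{i,j}S_k^{ij}u_{ij1}=f_{x_1}+f_uu_1$ and a corresponding second-order identity with a third-derivative cross term $\s S_k^{ij,pq}u_{ij1}u_{pq1}$. That cross term is absorbed by the concavity of $S_k^{1/k}$ from Lemma \ref{lem 2.2}(b); Lemma \ref{lem 2.2}(c) handles directions with small eigenvalues; Lemma \ref{lem 2.3} supplies the uniform lower bound $\s_i S_k^{ii}\geq c_0$; and Lemma \ref{lem 2.6} together with \eqref{eq S_k^ij u_ij+ S_k^ii} absorbs the zero-order contributions. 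With $\p$ and $\ppp$ chosen so that $W$ produces a positive contribution proportional to $u_1^2 S_k^{11}$, one forces a bound $u_{11}(x_0)\leq C(n,k,\ooo,|Du|_{C^0},\inf f,|f|_{C^2})$.

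\emph{Boundary case $x_0\in\pa\ooo$.} Pick a principal frame at $x_0$ with $e_n=\nu$ and second fundamental form $\operatorname{diag}(\ka_1,\dots,\ka_{n-1})$, $\ka_\al\geq a_\ka$. Decompose $\xii_0=\tau e'+\sqrt{1-\tau^2}\,\nu$ with $e'$ a unit tangent vector and $\ka':=\ka(e',e')\geq a_\ka$, so that
\begin{equation*}
    u_{\xii_0\xii_0}(x_0)=\tau^2 u_{e'e'}+2\tau\sqrt{1-\tau^2}\,u_{e'\nu}+(1-\tau^2)u_{\nu\nu}.
\end{equation*}
Differentiating $u_\nu=\pp(x,u)$ once tangentially along $e'$ gives $u_{e'\nu}=\pp_{e'}+(\pp_u+\ka')u_{e'}$, which is bounded by $|Du|_{C^0}$, $|\pp|_{C^1}$ and the boundary curvatures. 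Differentiating twice tangentially and using the Weingarten relations yields, modulo terms controllable in $u,Du,\pp,D^2\pp$ and the boundary geometry, a schematic identity of the form
\begin{equation*}
    (\pp_u-2\ka')u_{e'e'}=\te{bounded data}+O\z(\max_{\pa\ooo}|u_{\nu\nu}|\x).
\end{equation*}
Because $\pp_u\leq c_\pp$ and $\ka'\geq a_\ka$, the hypothesis $2a_\ka>c_\pp$ forces $\pp_u-2\ka'\leq c_\pp-2a_\ka<0$ uniformly, hence $|u_{e'e'}(x_0)|\leq C(1+\max_{\pa\ooo}|u_{\nu\nu}|)$; assembling the three contributions gives \eqref{eq 4.3}.

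The main obstacle is precisely this tangential boundary computation: the two tangential differentiations of the Neumann condition unavoidably produce curvature terms multiplying $u_{e'e'}$ itself, and the entire estimate hinges on the coefficient $\pp_u-2\ka'$ remaining uniformly negative. The quantitative assumption $2a_\ka>c_\pp$ is exactly what secures this non-degeneracy, and in the degenerate situation $c_\pp=0$ it forces $a_\ka>0$, i.e.\ uniform convexity of $\ooo$, which accounts for the ``In particular'' clause. The interior case is otherwise a direct transcription of the $\si_k$ argument once the sum-Hessian concavity and G\aa rding-type inequalities of Section \ref{sec 2-pre} are in hand.
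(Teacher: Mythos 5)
Your interior argument is fine in outline, but the boundary case contains a genuine gap, and it is exactly at the point on which the whole lemma hinges. Differentiating the Neumann condition $u_\nu=\pp(x,u)$ twice in a tangential direction $e'$ does \emph{not} produce an algebraic relation for $u_{e'e'}$: the left-hand side $D_{e'}D_{e'}(u_i\nu^i)$ contains the third-order term $u_{e'e'\nu}$, and the Weingarten term $2u_{e'i}D_{e'}\nu^i$ is what supplies $2\ka' u_{e'e'}$. The correct outcome is the inequality
\begin{equation*}
    u_{e'e'\nu}\;\leq\;(\pp_u-2\ka')\,u_{e'e'}+C\bigl(1+|u_{\nu\nu}|\bigr),
\end{equation*}
which is precisely the modified equation (38) of Ma--Qiu quoted in the paper's proof. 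Your ``schematic identity'' $(\pp_u-2\ka')u_{e'e'}=\te{bounded data}+O(\max_{\pa\ooo}|u_{\nu\nu}|)$ silently drops $u_{e'e'\nu}$, and with it the estimate collapses: to convert the displayed inequality into a bound for $u_{e'e'}$ one must couple it with a lower bound for $u_{e'e'\nu}$ coming from the sign of the normal derivative of the global auxiliary function at its boundary maximum (a Hopf-type step). This coupling is where the choice of test function matters: with the Ma--Qiu-type function $u_{\xii\xii}-v'+K_1|x|^2+K_2|Du|^2$ the Hopf step gives the \emph{additive} bound $u_{\xii\xii\nu}\geq -C(1+|u_{\nu\nu}|)$, and the uniformly negative coefficient $c_\pp-2a_\ka$ then yields $u_{\xii\xii}\leq C(1+\max_{\pa\ooo}|u_{\nu\nu}|)$. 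With your $\log u_{\xii\xii}+\p(|Du|^2/2)+\ppp(u)$ the same step only gives $u_{\xii\xii\nu}\geq -\bigl(\p' u_l u_{l\nu}+\ppp' u_\nu\bigr)u_{\xii\xii}$, and since $u_lu_{l\nu}$ contains $u_\nu u_{\nu\nu}$ this reintroduces a term of size $C\max_{\pa\ooo}|u_{\nu\nu}|\cdot u_{\xii\xii}$ that cannot be absorbed by $(2a_\ka-c_\pp)u_{\xii\xii}$ without a smallness assumption on $\max|u_{\nu\nu}|$, i.e.\ the argument becomes circular.

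A second, related omission: you treat the boundary case as a pointwise decomposition $\xii_0=\tau e'+\sqrt{1-\tau^2}\,\nu$, but when the maximizing direction $\xii_0$ is neither tangential nor normal you cannot apply the tangential differentiation argument to $e'$ without relating $u_{e'e'}(x_0)$ back to the maximal value, and the naive comparison degenerates as $\tau\to 1$. This is exactly why the paper (following Ma--Qiu's Lemma 13, and as spelled out for the parabolic analogue in Lemma \ref{lem parabolic global C^2 reduce to boundary}) builds the correction term $v'(x,\xii)=2(\xii\cdot\nu)\xii'^i(D_i\pp-u_lD_i\nu^l)$ into the auxiliary function, which reduces the mixed-direction case to the tangential and normal ones. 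Your identification of $c_\pp-2a_\ka<0$ as the decisive structural condition (and of uniform convexity as its substitute when only $\pp_u\leq 0$ holds) is correct, but the proof as proposed does not yet deliver \eqref{eq 4.3}: you need to restore the $u_{e'e'\nu}$ term, perform the normal-derivative (Hopf) coupling with an auxiliary function for which that coupling is additive rather than multiplicative, and handle the non-tangential maximizing directions.
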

\begin{proof}
    The proof follows the same approach as in \cite[Lemma 13]{Ma-Qiu-2019-CPM}, and it relies on Lemma \ref{lem 2.2} $(b)$ and $(c)$. We emphasize that equation (38) in \cite{Ma-Qiu-2019-CPM} can be modified as
    $$
    u_{\xi \xi \nu}\leq (\pp_{u}-2D_1 \nu^1)u_{\xi \xi}+C(1+|u_{\nu \nu}|)\leq (c_{\pp}-2a_{\ka})u_{\xi\xi}+C(1+|u_{\nu \nu}|),
    $$
    where the almost convexity condition of $\ooo$ ensures $2a_{\ka}>c_{\pp}$.
\end{proof}

\subsection{An upper estimate of the double normal derivatives on the boundary}
\

Due to the lack of homogeneity, the double normal derivatives estimate is a bit more complicated. So we give the proof here.

\begin{lemma}\label{lem elliptic u_nu nu leq C}
    Under the assumptions of Theorem $\ref{thm elliptic C^2 estimates}$, then we have
    \begin{align}\label{eq elliptic u_{nu nu} leq C}
        \max_{\pa \ooo} u_{\nu \nu}  \leq M_5,
    \end{align}
    where the positive constant $M_5$ depends on $n, k, c_\ka, \Omega, M_4, |Du|_{C^0}, \inf f, |f|_{C^2}$, and $|\pp|_{C^3}$.

    In particular, when $\pp$ merely satisfies $\pp_u\leq 0$, we must require $\ooo$ to be uniformly convex. 
\end{lemma}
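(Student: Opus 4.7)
The strategy is a barrier argument near the boundary. Define $v(x) := \nu^i(x) u_i(x) - \pp(x, u(x))$ on $\ooo_\mu$, where $\nu$ is smoothly extended from $\pa\ooo$ into the interior. Then $v \equiv 0$ on $\pa\ooo$, and a direct computation at a boundary point gives $u_{\nu\nu} = v_\nu - u_i \pa_\nu \nu^i + \pp_\nu + \pp_u u_\nu$, so it suffices to bound $v_\nu$ from above on $\pa\ooo$ in terms of the listed quantities.

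I would then apply the linearized operator $L := S_k^{ij}\pa_{ij}$ to $v$. Differentiating the equation $S_k(D^2u) = f(x,u)$ once in $x_l$ yields $L(u_l) = f_{x_l} + f_u u_l$; combining this with the chain-rule expansion of $L(\nu^i u_i - \pp(x,u))$ and handling the cross term $S_k^{ij}\nu^l_i u_{lj}$ by simultaneous diagonalization of $D^2 u$ together with identity $(7)$ of Definition \ref{def 2.1} produces an estimate of the form $|Lv| \leq C_0 (1 + \sum_{i=1}^n S_k^{ii})$ in $\ooo_\mu$. Because $f \geq \inf f > 0$ and $\lam(D^2 u) \in \tilde{\gaa}_k$, Lemma \ref{lem 2.3} gives the uniform positivity $\sum_i S_k^{ii} \geq c_0 > 0$.

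The barrier is $\Phi := v - K h$, where $h$ is the function constructed in Lemma \ref{lem h=-d+Kd^2} and $K > 0$ is a large constant to be chosen. By Lemma \ref{lem h=-d+Kd^2}, $L h \geq c_1 \sum_i S_k^{ii}$ on $\overline{\ooo_\mu}$, so for $K$ sufficiently large (depending on $C_0$, $c_0$, $c_1$) one has $L\Phi \leq 0$ in $\ooo_\mu$. Choosing $K$ still larger, relative to the $C^0$-size of $v$ controlled by $|Du|_{C^0} + |\pp|_{C^0}$, ensures $\Phi \geq 0$ on the inner layer $\{d = \mu\}$; since $\Phi = 0$ on $\pa\ooo$, the minimum principle forces $\Phi \geq 0$ throughout $\ooo_\mu$. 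At any $x_0 \in \pa\ooo$ the Hopf-type inequality then gives $\pa_\nu \Phi(x_0) \leq 0$, and since $\pa_\nu d|_{\pa\ooo} = -1$ implies $\pa_\nu h|_{\pa\ooo} = 1$, this translates into $v_\nu(x_0) \leq K$, hence $u_{\nu\nu}(x_0) \leq M_5$.

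The main obstacle is a potential circular dependence between $K$ and $\max_{\pa\ooo}|u_{\nu\nu}|$: the constant $C_0$ in the estimate of $|Lv|$ a priori depends on $|D^2u|_{C^0}$, which by Lemma \ref{lem elliptic global C^2 reduce to boundary} is bounded only as $M_4(1 + \max_{\pa\ooo}|u_{\nu\nu}|)$. Closing the argument cleanly requires invoking the identity $\sum_i \lam_i S_k^{ii} = k S_k - \al \si_{k-1}$ from Definition \ref{def 2.1}(7), so that the most dangerous contribution to $L(\nu^i u_i)$ is controlled by $f$ and $\si_{k-1}$ alone, making $K$ depend only on the quantities named in the statement. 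The uniform $(k-1)$-convexity of $\ooo$ (via $c_\ka$) enters exclusively through Lemma \ref{lem h=-d+Kd^2}; the stricter uniform convexity required in the case $\pp_u \equiv 0$ reflects the breakdown of the $C^0$ estimate in Theorem \ref{thm elliptic C^0}, which must then be recovered from the geometry of the domain.
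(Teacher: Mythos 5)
Your barrier has the right general shape (it is essentially the paper's function $P$ with $\beta=0$ and with the coefficient of $h$ taken independent of $M:=\max_{\pa\ooo}|u_{\nu\nu}|$), but the step on which everything hinges --- the claim $|Lv|\leq C_0\bigl(1+\sum_i S_k^{ii}\bigr)$ with $C_0$ independent of $|D^2u|_{C^0}$ --- is not justified, and the proposed fix does not work. Writing $\nu=-Dd$ and diagonalizing $D^2u$, the cross term in $Lv$ is $2S_k^{ij}\nu^l_{\,i}u_{lj}=2\sum_i \nu^i_{\,i}S_k^{ii}u_{ii}$, a \emph{weighted} sum with non-constant weights $\nu^l_{\,i}=-d_{li}$ (the curvature of the level sets of $d$). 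Identity $(7)$ of Definition \ref{def 2.1} (equivalently Lemma \ref{lem 2.6}) controls only the unweighted sum $\sum_i S_k^{ii}u_{ii}=kS_k-\al\si_{k-1}$; it says nothing about $\sum_i a_iS_k^{ii}u_{ii}$ when the $a_i$ differ, which is the generic situation unless $\ooo$ is a ball (Trudinger's case). Since $\lam(D^2u)\in\tilde\gaa_k$ admits a negative eigenvalue of size comparable to $|D^2u|_{C^0}\leq M_4(1+M)$, the quantity $\sum_{u_{ii}<0}S_k^{ii}|u_{ii}|$, and hence the cross term, can be of order $M\sum_iS_k^{ii}$. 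Your constant $K$ would then have to grow with $M$, and the final inequality $u_{\nu\nu}\leq K+C$ no longer closes: the circularity you flag is real and is not removed by Definition \ref{def 2.1}$(7)$.

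This is precisely why the paper's proof is more involved. Its barrier is $P=(1+\beta d)\bigl[Du\cdot(-Dd)-\pp\bigr]-\bigl(A+\tfrac{M}{2}\bigr)h$: the coefficient of $h$ is taken proportional to $M$, and the factor $(1+\beta d)$, combined with the first-order conditions $P_i=0$ at an interior minimum, forces $u_{ii}\geq\tfrac{A+M}{8}$ on the ``good'' index set $G$ where $\beta d_i^2\geq\frac1n$. The dangerous contribution of order $M\sum_iS_k^{ii}$ is then absorbed by the term $-\tfrac{2\beta}{n}S_k^{11}u_{11}$, which requires the case analysis of Lemma \ref{lem elliptic u_nu nu leq C} and, in the critical case of a large negative eigenvalue, the key inequality $S_k^{11}\geq\theta_1S_k^{nn}$ of Lemma \ref{lem 2.5} (with $\de,\es$ chosen from $M_4$ and $c_1$). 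None of this machinery appears in your argument, and without it the supersolution-plus-Hopf scheme cannot produce a bound independent of $M$. A smaller inaccuracy: when $\pp_u\leq 0$ only, uniform convexity is needed not merely to recover a $C^0$ bound but in the second-derivative estimates themselves (e.g.\ in Lemma \ref{lem elliptic global C^2 reduce to boundary} one needs $\pp_u-2D_1\nu^1\leq -2\ka_{\min}<0$).
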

\begin{proof}
    Without loss of generality, we may assume $M:=\max\limits_{\pa \ooo}|u_{\nu \nu}|=\max\limits_{\pa \ooo}u_{\nu \nu}>0$. Otherwise, the result follows directly from the argument in Lemma \ref{lem elliptic u_nu nu geq -C}. Furthermore, we assume that $u_{\nu \nu}|_{\pa \ooo}$ attains its maximum at $z_0$. 

    According to Lemma \ref{lem h=-d+Kd^2}, there exist a small constant $\frac{1}{10}>\mu>0$, depending on $n,k,\ooo$, and a constant $K>0$ such that $8K\mu\leq  1$ and $h(x)=-d(x)+Kd^2(x)$ satisfies 
    \begin{align*}
        \sum_{i,j} S_k^{ij}h_{ij} \geq c_1 \sum_{i} S_k^{ii}, \quad \te{in } \ooo_\mu,
    \end{align*}
    where $c_1>0$ depends on $n$, $k$, $c_\ka$, and $\ooo$.

    Let $\bb$ and $A$ be constants to be determined, and define the auxiliary function
    \begin{align*}
        P(x)=(1+\beta d)\z[D u \cdot (-D d)-\pp(x,u) \x] - \z(A + \frac{M}{2}\x)h(x),\quad \te{in } \ooo_\mu.
    \end{align*}

    Clearly, $P = 0$ on $\pa \ooo$. 
    On $\pa \ooo_\mu \cap \ooo$, we have $d=\mu$ and
    \begin{align*}
        P(x) \geq& - C_{1}-\mu \z[\bb C_{1}-\z(A+\frac{M}{2}\x)(1-K \mu)\x] > 0,
    \end{align*}
    provided $A\geq \frac{8}{7}C_{1}\z(\bb+\mu^{-1}\x)$, with $C_1=C_1(|Du|_{C^0},|\pp|_{C^0})$.

    We claim that $P\big|_{\overline{\ooo_{\mu}}}$ attains its minimum only on $\pa \ooo$. Since $P|_{\pa\ooo}=0$, then 
    \begin{align*}
        0 \geq P_\nu(z_0)=&\z[u_{\nu \nu}(z_0) -\sum_j u_j \nu_i d_{ji}-D_\nu \pp \x] -\z(A + \frac{M}{2}\x) \notag\\
        \geq& \max_{\pa \ooo} u_{\nu \nu}-C_{2}- A  - \frac{M}{2}=\frac{1}{2}\max_{\pa \ooo}u_{\nu\nu}-C_{2}-A,
    \end{align*}
    where $C_{2}>0$ depends on $\ooo$, $|Du|_{C^0}$, and $|\pp|_{C^1}$.
    Therefore, \eqref{eq elliptic u_{nu nu} leq C} holds.

    To prove this claim, we assume by contradiction that $P$ attains its minimum at some point  $x_0 \in \Omega_\mu$. Moreover, assume that $D^2 u(x_0)$ is diagonal. All subsequent calculations are evaluated at $x_0$. Through direct calculation, we obtain
    \begin{align}\label{eq elliptic upper bound 0=P_i}
        0 = P_i =& \bb d_i \z[-\sum_{j} u_j d_j  -\pp\x] + (1+ \bb d)\z[-\sum_{j} (u_{ji} d_j + u_j d_{ji}) -\pp_{x_i}-\pp_u u_i \x]  \\
        &- \z(A + \frac{M}{2}\x)h_i, \notag
    \end{align}
    and
    \begin{align}
        0 \leq {P}_{ii}  
        =& -\beta d_{ii} \z[\sum_{j} u_j d_j + \pp\x] - 2\beta d_i \z[u_{ii} d_i+\sum_{j}u_j d_{ji}+\pp_{x_i}+\pp_u u_i\x]-\z(A+\frac{M}{2}\x)h_{ii} \notag \\
        &- (1+ \beta d)\z[\sum_{j} u_{jii} d_j+2 u_{ii} d_{ii}+\sum_{j} u_j d_{jii}+\pp_{x_i x_i}+2\pp_{x_i u}u_i+\pp_{uu}u_i^2+\pp_u u_{ii}\x] \notag \\
        \leq&  -2\beta u_{ii} d_i^2 -(1+ \beta d)\z[\sum_{j} {u_{jii} d_j }+ 2u_{ii} d_{ii} +\pp_u u_{ii}\x]- \z(A +\frac{M}{2}\x)h_{ii} \label{eq elliptic upper bound 0 leq P_{ii}} \\
        & + C_{3}(\ooo,|Du|_{C^0},|\pp|_{C^2})(1+\bb).\notag
    \end{align}
    Contacting $S_k^{ii}$ in both sides of \eqref{eq elliptic upper bound 0 leq P_{ii}} and using $\sum_{i} S_k^{ii}\geq c_0(n,k,\inf f)$, it follows that
    \begin{align}\label{eq elliptic upper bound,  0 leq S_k^ii P_ii}
        0 \leq \sum_{i} S_k^{ii} P_{ii}  
        \leq&  - 2\beta \sum_{i} S_k^{ii} u_{ii} d_i^2- 2(1+ \beta d)\sum_{i} S_k^{ii} u_{ii} d_{ii}-(1+\bb d )\pp_u \sum_{i} S_k^{ii}u_{ii}   \\
        &+ \z[-\z(A + \frac{M}{2}\x)c_1  + C_{4}(1+\bb)\x]\sum_{i} S_k^{ii}\notag
    \end{align}
    where the constant $C_{4}$ depends on $n, k, \ooo, |Du|_{C^0}, \inf f, |f|_{C^1}$, and $|\pp|_{C^2}$.

    We now partition the indices $1 \leq  i \leq  n$ into two categories to analyze the term $\sum_i S_k^{ii}u_{ii}d_i^2$. Let
    $$
    B = \z\{ i:\beta d_i ^2  < \frac{1}{n},1 \leq i \leq n\x\} \quad \te{and} \quad  G= \z\{i:\beta d_i ^2 \geq  \frac{1}{n},1 \leq i \leq n\x\}.
    $$ 
    By choosing $\beta = \frac{1}{\mu} >1$, $G$ is nonempty. Furthermore, there exists an index $i_0 \in G$, and without loss of generality, we may assume $i_0=1$, such that 
    $$
    d_1^2 \geq \frac{1}{n}.
    $$
    
    Using \eqref{eq elliptic upper bound 0=P_i}, we have
    \begin{align*}
        u_{ii}= \frac{1-2Kd}{1+ \beta d}\z(A + \frac{M}{2}\x) + \frac{\beta \z[-\sum_{j} u_j d_j -\pp \x]}{1+ \bb d} + \frac{-\sum_{j} u_j d_{ji} - D_i \pp}{d_{i } }.
    \end{align*}
    Choosing a sufficiently large $A >0$, such that for any $i \in G$
    \begin{align*} 
        \z| \frac{\beta \z[-\sum_j u_j d_j -\pp \x]}{1+ \beta d} +\frac{-\sum_{j} u_j d_{ji} -D_i\pp}{d_{i}}\x| 
        \leq C_{5}\z(\bb +\sqrt{n\bb}\x)
        \leq \frac{A}{8},
    \end{align*}
    where $C_{5}$ depends on $\ooo$, $|Du|_{C^0}$, and $|\pp|_{C^1}$. Hence
    \begin{align}\label{eq elliptic upper bound u_{ii} >>1}
        \frac{A+M}{8} \leq u_{ii} \leq \frac{9A}{8}+ \frac{M}{2}, \quad \te{for any } i \in G.
    \end{align}
    
    Thus, \eqref{eq elliptic upper bound,  0 leq S_k^ii P_ii} becomes
    \begin{align}
        0 \leq \sum_{i} {S_k^{ii} P_{ii} }  \leq&  - 2\beta \sum\limits_{i \in G} {S_k^{ii} u_{ii} d_i ^2 }  - 2\beta \sum\limits_{i \in B} {S_k^{ii} u_{ii} d_i ^2 }  \notag \\
        &- 2(1+ \beta d)\sum\limits_{u_{ii}  > 0} {S_k^{ii} u_{ii} d_{ii} } -\pp_u(1+\bb d)\sum_{u_{ii}>0} S_k^{ii}u_{ii} \notag \\
        &+ \z[-\z(A + \frac{M}{2}\x)c_1  + C_{4}\bb\x]\sum\limits_{i } {S_k^{ii} }   \notag \\
        \leq & -\frac{2\bb}{n}S_k^{11}u_{11}-\frac{2}{n}\sum_{i\in B;u_{ii}<0}S_k^{ii}u_{ii}+(4\ka_{\operatorname{max}}+2|\pp_u|)\sum_{u_{ii}>0}S_k^{ii}u_{ii} \notag \\
        & + \z[-\z(A+\frac{M}{2}\x)c_1+C_{4}\bb\x]\sum_{i} S_k^{ii}.\notag \\
        \triangleq & \operatorname{I}+\operatorname{II}+\operatorname{III}+\operatorname{IV}. \label{eq elliptic upper bound, I+II+III+IV}
    \end{align}

    In the following, we divide into three cases. Without loss of generality, we assume that $u_{22} \geq \cdots \geq u_{nn}$.

    \textbf{Case 1:} $u_{nn} \geq 0$.

    In this case, we observe that $\operatorname{I}<0$, $\operatorname{II}=0$, and the term $\operatorname{III}$ satisfies
    \begin{align*}
        \operatorname{III}=(4\kappa_{\operatorname{max}}+ 2|\pp_u|) \sum_{i} {S_k^{ii} u_{ii} } \leq  (4\ka_{\operatorname{max}}+2|\pp_u|)kS_k \leq C_{6}(k,\ooo,|f|_{C^0},|D\pp|_{C^0}).
    \end{align*}
    We use the fact that $\sum_{i=1}^n S_k^{ii}\geq  c_0$ and choose a sufficiently large constant $A$ satisfying
    $$
    A>\frac{c_0C_{4}\bb+C_{6}}{c_0c_1},
    $$ 
    then \eqref{eq elliptic upper bound, I+II+III+IV} becomes
    \begin{align*} 
        0 \leq \sum_{i} S_k^{ii} P_{ii} \leq C_{6}+ \z[- \z(A + \frac{M}{2}\x)c_1 + C_{4}\bb\x]c_0 <0.
    \end{align*}
    This leads to a contradiction.

    \textbf{Case 2:} $u_{nn} < 0$ and $-u_{nn} < \frac{c_1}{10 \z(4\kappa_{\operatorname{max}}+2|D\pp|_{C^0}+ \frac{2}{n}\x)}u_{11}$.

    In this case, $\operatorname{I}<0$. Form \eqref{eq elliptic upper bound u_{ii} >>1}, we obtain
    \begin{align}
        \operatorname{II}+ \operatorname{III}
        =&  (4\kappa_{\operatorname{max}}+ 2|\pp_u|) \z(\sum_{i} {S_k^{ii} u_{ii} }-\sum_{u_{ii}<0}S_k^{ii}u_{ii}\x)-\frac{2}{n}\sum_{i\in B;u_{ii}<0}S_k^{ii}u_{ii} \notag \\
        \leq &  (4\kappa_{\operatorname{max}}+ 2|\pp_u|) kS_k -\z(4\kappa_{\operatorname{max}}+ 2|\pp_u|+\frac{2}{n}\x) \sum_{u_{ii}<0}S_k^{ii}u_{ii}. \label{eq elliptic upper bound, case 2-II+III-middle}
    \end{align}
    Therefore \eqref{eq elliptic upper bound, I+II+III+IV} becomes
    \begin{align*}
        0 \leq \sum_{i} S_k^{ii} P_{ii} 
        \leq&  C_{6}+\frac{c_1}{10}u_{11}\sum_{u_{ii}<0} S_k^{ii} \notag  + \z[- \z(A + \frac{M}{2}\x)c_1 + C_{4}\bb\x]\sum_{i}  {S_k^{ii} }  \notag \\
        \leq &  C_{6}+\z[-c_1\z(\frac{71}{80}A+\frac{9}{20}M\x)+C_{4}\bb\x]c_0<0,
    \end{align*}
    provided $A>\frac{80\z[C_{6}+ c_0 C_{4}\bb\x]}{71 c_0 c_1}$. This is a contradiction.

    \textbf{Case 3:} $u_{nn} < 0$ and $-u_{nn} \geq \frac{c_1}{10 \z(4\kappa_{\operatorname{max}}+2|D \pp|_{C^0}+ \frac{2}{n}\x)} u_{11} $.

    In the following, we divide into three subcases to deal with.
    
    \textbf{Case 3a:} $k=2$ and $0\geq u_{22}(\geq u_{33} \geq  \cdots \geq u_{nn}$).

    Since $0<S_k^{11}=S_2^{11}=\si_{1,1}+\al=\sum_{i\geq2} u_{ii}+\al$, then for any $2\leq i\leq n$, we have
    $$
    -\al \leq u_{ii}\leq 0. 
    $$
    If follows from \eqref{eq elliptic upper bound, case 2-II+III-middle} that
    \begin{align*}
        \operatorname{II} +\operatorname{III}\leq & C_{6}+\z(4\ka_{\operatorname{max}}+2|\pp_u|+\frac{2}{n}\x) \al \sum_{i} S_k^{ii} \leq C_{7}(n,\ooo,|f|_{C^0},|D\pp|_{C^0}) \sum_{i} S_k^{ii}.
    \end{align*}
    Hence, if we choose $A>C_{4}\bb +C_{7}$,
    \begin{align*}
        0\leq \sum_i S_k^{ii}P_{ii}\leq \z[-\z(A+\frac{M}{2}\x)c_1+C_{4}\bb+C_{7}\x]\sum_{i} S_k^{ii}<0. 
    \end{align*}
    This is a contradiction.

    \textbf{Case 3b:} $k=n=2$. Since, $u_{nn}=u_{22}<0$, the case returns to Case $3a$.

    \textbf{Case 3c:} $k\geq 3$ or $n>k=2$ and $u_{22}>0$.
    
    In this subcase, we observe that $u_{11}\geq \frac{A+M}{8}$, and $|D^2 u|\leq M_4 (1+M)$ by Lemma \ref{lem elliptic global C^2 reduce to boundary}. If $A\geq 8$, we have $u_{11}\geq 1$, $u_{11}\geq \frac{u_{22}}{8 M_4}$, 
    and 
    \begin{align}\label{eq elliptic upper bound, subcase 3c-II+III}
        \operatorname{II}+ \operatorname{III}
        \leq &  (4\kappa_{\operatorname{max}}+ 2|\pp_u|) kS_k -\z(4\kappa_{\operatorname{max}}+ 2|\pp_u|+\frac{2}{n}\x) \sum_{u_{ii}<0}S_k^{ii}u_{ii}\notag \\
        \leq & \z[C_{6}+\z(4\ka_{\operatorname{max}}+2|\pp_u|+\frac{2}{n}\x)M_4(1+M)\x]\sum_{i} S_k^{ii}
    \end{align}
    Using Lemma \ref{lem 2.4} and setting $\delta = \frac{1}{8 M_4}$ and $\es=\frac{c_1}{10 \z(4\kappa_{\operatorname{max}}+2|D\pp|_{C^0}+ \frac{2}{n}\x)}$ in Lemma \ref{lem 2.5}, we have
    \begin{align*}
        S_{k}^{11}\geq \ttt_1(n,k,\es,\de) S_k^{nn}\geq \ttt_1 c_{n,k} \sum_{i} S_k^{ii}:=c_2 \sum_{i} S_k^{ii},
    \end{align*}
    where $c_2>0$ depends on $n$, $k$, $\ooo$, $M_4$, $|Du|_{C^0}$, $\inf f$, $|f|_{C^2}$ and $|\pp|_{C^3}$. Thus 
    \begin{align}\label{eq elliptic upper bound, subcase 3c-I}
        \operatorname{I}=-\frac{2\bb}{n}S_k^{11}u_{11}\leq -\frac{c_2 \bb}{4n}\z(A+M\x)\sum_{i} S_k^{ii}.
    \end{align}
    Substituting \eqref{eq elliptic upper bound, subcase 3c-II+III} and \eqref{eq elliptic upper bound, subcase 3c-I} into \eqref{eq elliptic upper bound, I+II+III+IV}, we obtain
    \begin{align*}
        0 \leq \sum_{i}  {S_k^{ii} P_{ii} }  \leq&  \z[C_{6}+\z(4\ka_{\operatorname{max}}+2|\pp_u|+\frac{2}{n}\x)M_4 (1+M)-\frac{\bb c_2}{4n}(A+M)\x]\sum_{i} S_k^{ii} \\
        & +\z[-\z(A+\frac{M}{2}\x)c_1+C_{4}\bb\x]\sum_{i} S_k^{ii} <0, \notag 
    \end{align*}
    if the constants $\bb$ and $A$ are chosen sufficiently large such that 
    $$
    \frac{c_2}{4n}\bb >C_{6}+\z(4\ka_{\operatorname{max}}+2|\pp|_{C^1}+\frac{2}{n}\x)M_4, \quad A>\frac{C_{4}}{c_1}\bb,
    $$
    which leads to a contradiction. 
    
    Summarizing, if the positive constants $\bb$ and $A$ satisfy the following conditions:
    $$
    \bb =\mu^{-1} \geq L_1 (M_4+1), \quad \te{and} \quad A>\max\{ 8,\bb L_1 \}, 
    $$
    for some sufficiently large constant $L_1$, then $P$ attains its minimum only on $\pa \ooo$. 
\end{proof}

\subsection{A lower estimate of the double normal derivatives on the boundary}

\begin{lemma}\label{lem elliptic u_nu nu geq -C} 
    Under the assumptions of Theorem $\ref{thm elliptic C^2 estimates}$, then we have
    \begin{align}\label{eq elliptic u_{nu nu} geq -C}
        \min_{\pa \ooo} u_{\nu \nu}  \geq - M_{6},
    \end{align}
    where the positive constant $M_{6}$ depends on $n, k, c_\ka, \Omega, M_4, |Du|_{C^0}, \inf f, |f|_{C^2}$, and $|\pp|_{C^3}$.

    In particular, when $\pp$ merely satisfies $\pp_u\leq 0$, we must require $\ooo$ to be uniformly convex. 
\end{lemma}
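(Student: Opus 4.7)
The strategy mirrors the proof of Lemma \ref{lem elliptic u_nu nu leq C} but uses a sign-reversed auxiliary function. An essential simplification is that the upper bound of Lemma \ref{lem elliptic u_nu nu leq C} is now available, so by Lemma \ref{lem elliptic global C^2 reduce to boundary} the full Hessian is a priori bounded, $|D^2u|\leq M_4(1+M_5)=:M_7$. This bound removes the need for the delicate eigenvalue case analysis that dominated the upper estimate, because every term involving $|u_{ii}|$ can now be controlled by a fixed multiple of $\sum_i S_k^{ii}$. With $\mu,K,h,c_1$ as in Lemma \ref{lem h=-d+Kd^2}, and with constants $\beta,A>0$ to be determined, I would introduce
\[
Q(x)=(1+\beta d(x))\bigl[Du(x)\cdot(-Dd(x))-\pp(x,u(x))\bigr]+Ah(x),\qquad x\in\overline{\ooo_\mu}.
\]
The Neumann condition gives $Q\equiv 0$ on $\pa\ooo$, and on $\pa\ooo_\mu\cap\ooo$ one estimates $Q\leq C(1+\beta\mu)-A\mu(1-K\mu)<0$ once $A$ is chosen sufficiently large.

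Granted the claim that $Q$ attains its maximum on $\overline{\ooo_\mu}$ only along $\pa\ooo$, pick $z_0\in\pa\ooo$ realizing $\min_{\pa\ooo}u_{\nu\nu}$. Then $Q(z_0)=0$ is the maximum value, Hopf's inequality gives $Q_\nu(z_0)\geq 0$, and a direct differentiation using $h_\nu(z_0)=1$ yields
\[
0\leq Q_\nu(z_0)=u_{\nu\nu}(z_0)-\sum_{i,j}u_j\nu_i d_{ji}-D_\nu\pp+A,
\]
which rearranges to $u_{\nu\nu}(z_0)\geq -A-C$ with $C$ depending only on $\ooo$, $|Du|_{C^0}$, and $|\pp|_{C^1}$. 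This gives the required bound \eqref{eq elliptic u_{nu nu} geq -C}, so the heart of the argument is the maximum principle claim.

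To justify the claim, suppose for contradiction that $Q$ attains a positive maximum at an interior point $x_0\in\ooo_\mu$, and diagonalize $D^2u(x_0)$. The conditions $DQ(x_0)=0$ and $\sum_i S_k^{ii}Q_{ii}(x_0)\leq 0$ should fail: the dominant positive contribution $A\sum_i S_k^{ii}h_{ii}\geq Ac_1\sum_i S_k^{ii}$ comes from Lemma \ref{lem h=-d+Kd^2}, and the remaining terms are managed by (i) differentiating $S_k(D^2u)=f(x,u)$ in $x_l$ to replace $\sum_i S_k^{ii}u_{lii}$ by $f_{x_l}+f_u u_l$; (ii) using the a priori bound $|u_{ii}|\leq M_7$ to dominate $-2(1+\beta d)\sum_i S_k^{ii}u_{ii}d_{ii}$ by $-CM_7\sum_i S_k^{ii}$; (iii) invoking Lemma \ref{lem 2.6} together with $\pp_u\leq c_\pp<0$ to control $-(1+\beta d)\pp_u\sum_i S_k^{ii}u_{ii}$; and (iv) substituting the first-order relation into the cross term $2\beta\sum_i S_k^{ii}d_i w_i$ to re-express it in terms of $d_i^2$ and $h_i$. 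Collecting,
\[
\sum_i S_k^{ii}Q_{ii}(x_0)\geq\bigl[Ac_1-C_*(1+\beta+M_7)\bigr]\sum_i S_k^{ii},
\]
which is strictly positive once $\beta$ is fixed and $A$ is chosen large enough, contradicting the maximum condition.

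The main obstacle is the sign-bookkeeping in step (iv): the cross term $2\beta\sum_i S_k^{ii}d_i w_i$ has order $\beta$ with no a priori definite sign, and must be absorbed into the $Ac_1$ contribution by taking $A$ large after $\beta$ is pinned down. The final assertion of the lemma (the need for uniform convexity when only $\pp_u\leq 0$ is assumed) enters at step (iii): without the strict negativity $\pp_u\leq c_\pp<0$, the term $-\pp_u\sum S_k^{ii}u_{ii}$ loses its favorable lower bound, and one must compensate in step (ii) by exploiting $d_{ii}\leq -\gamma_\kappa<0$, which is available only under uniform convexity of $\ooo$.
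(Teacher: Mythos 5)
Your proposal has a genuine gap, and it sits exactly at the step you call the ``essential simplification.'' You claim that, since Lemma \ref{lem elliptic u_nu nu leq C} is available, Lemma \ref{lem elliptic global C^2 reduce to boundary} gives a fixed Hessian bound $|D^2u|\leq M_4(1+M_5)=:M_7$. But Lemma \ref{lem elliptic global C^2 reduce to boundary} bounds $|D^2u|$ by $M_4\bigl(1+\max_{\pa\ooo}|u_{\nu\nu}|\bigr)$, and $\max_{\pa\ooo}|u_{\nu\nu}|$ may well be realized by $-\min_{\pa\ooo}u_{\nu\nu}$, which is precisely the quantity this lemma is supposed to bound; Lemma \ref{lem elliptic u_nu nu leq C} controls only the positive part $\max_{\pa\ooo}u_{\nu\nu}$ (indeed its proof explicitly defers the complementary case to the present lemma). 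So in the relevant case your $M_7$ is really $M_4(1+M)$ with $M:=-\min_{\pa\ooo}u_{\nu\nu}$ unknown. This destroys the interior contradiction as you set it up: the terms you absorb in steps (ii)--(iv), in particular $-2\beta\sum_i S_k^{ii}u_{ii}d_i^2$ (where some $u_{ii}$ may be positive and as large as $M_4(1+M)$) and $-(1+\beta d)\pp_u\sum_i S_k^{ii}u_{ii}$, can be of size $C\beta(1+M)\sum_i S_k^{ii}$, while your good term is only $Ac_1\sum_i S_k^{ii}$ with $A$ a fixed constant. To dominate them you would need $A\gtrsim\beta M$, and then your boundary conclusion $u_{\nu\nu}(z_0)\geq -A-C$ is circular.

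This is exactly why the paper's auxiliary function carries the coefficient $\bigl(A+\tfrac{M}{2}\bigr)$ in front of $h$, and why its interior argument cannot avoid the index splitting you discarded: it partitions $\{1,\dots,n\}$ into $G=\{\,i:\beta d_i^2\geq \tfrac1n\,\}$ and its complement, uses the first-order condition $\widetilde P_i=0$ to show $u_{ii}\leq -\tfrac{A+M}{8}$ for $i\in G$, and then invokes Lemma \ref{lem 2.4} (applicable since $u_{11}<0$) to produce the dominant good term $-\tfrac{2\beta}{n}S_k^{11}u_{11}\geq \tfrac{\beta c_{n,k}}{4n}(A+M)\sum_i S_k^{ii}$, which grows linearly in $M$ and therefore absorbs all the bad terms of size $C(1+M)\sum_i S_k^{ii}$ once $\beta$ is fixed large (independently of $M$) and then $A$ is chosen large. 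Your proposal contains none of this mechanism (no $G/B$ splitting, no quantitative negativity of $u_{ii}$ from $DQ=0$, no use of Lemma \ref{lem 2.4}), so the maximum-principle claim does not close. As a secondary point, your explanation of the final remark is also misplaced: the role of $\pp_u\leq c_\pp<0$ versus uniform convexity enters chiefly through the constant $M_4$ of Lemma \ref{lem elliptic global C^2 reduce to boundary} (the boundary term $\pp_u-2D_1\nu^1$ there), not through trading $-\pp_u\sum_i S_k^{ii}u_{ii}$ against $d_{ii}$ in this boundary estimate.
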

\begin{proof}
    Without loss of generality, assume $M:=\max\limits_{\pa \ooo}|u_{\nu \nu}|=-\min\limits_{\pa \ooo}u_{\nu \nu}>0$. Otherwise, the result follows directly from the argument in Lemma \ref{lem elliptic u_nu nu leq C}. We further assume that $u_{\nu \nu}|_{\pa \ooo}$ attains its minimum at $\tilde{z}_0$. By a similar argument, we consider the auxiliary function
    \begin{align*}
        \widetilde{P}(x) =(1+ \beta d)[D u \cdot (-D d)-\pp (x,u)] +\z(A +\frac{M}{2}\x)h(x),\quad \te{in }\ooo_\mu,
    \end{align*}
    where $h$ is defined in Lemma \ref{lem h=-d+Kd^2}, $\bb$ and $A$ are positive constants to be determined.

    Clearly, $\widetilde{P}=0$ on $\pa \ooo$. On $\pa \ooo_\mu \cap \ooo$, we have 
    \begin{align*}
        \widetilde{P}(x) \leq C_1+\mu \z[\bb C_1-\z(A+\frac{M}{2}\x)(1-K\mu)\x]<0,
    \end{align*}
    provided $A\geq \frac{8}{7}C_1 \z(\bb+\frac{1}{\mu}\x)$, with $C_1=C_1(|Du|_{C^0},|\pp|_{C^0})$.

    Next, we claim that $\widetilde{P}|_{\overline{\ooo_\mu}}$ attains its maximum only on $\pa \ooo$. then 
    \begin{align*}
        0 \leq \widetilde{P}_\nu (\tilde{z}_0)  =&  \z[u_{\nu \nu}(\tilde{z}_0) -u_i \nu_j d_{ij}- D_\nu \pp\x] + A + \frac{M} {2}  \notag\\
        \leq& \min_{\pa \ooo} u_{\nu \nu} + C_{2} + A + \frac{M} {2}=\frac{1}{2}\min_{\pa \ooo} u_{\nu \nu} + C_{2} + A,
    \end{align*}
    where $C_2>0$ depends on $\ooo$, $|Du|_{C^0}$, and $|\pp|_{C^1}$. Therefore, \eqref{eq elliptic u_{nu nu} geq -C} holds.

    To prove this claim, we assume by contradiction that $\widetilde{P}$ attains its maximum at some point  $\tilde{x}_0 \in \Omega_\mu$. Moreover, assume that $D^2 u(\tilde{x}_0)$ is diagonal. 
    All subsequent calculations are evaluated at $\tilde{x}_0$.

    Similar to the calculations in Lemma \ref{lem elliptic u_nu nu leq C}, we have
    \begin{align}\label{eq elliptic lower, 0=widetilde{P}_{i}}
        0 = \widetilde{P}_i =& \beta d_i \z[-\sum_{j} {u_j d_j } - \pp \x] + (1+ \beta d)\z[-\sum_{j} {(u_{ji} d_j + u_j d_{ji})} - \pp_{x_i}-\pp_u u_i \x] \\
        &+ \z(A + \frac{M}{2}\x)h_i,\nonumber
    \end{align}
    and
    \begin{align}\label{eq elliptic lower, 0 geq widetilde{P}_{ii}}
        0 \geq \widetilde{P}_{ii}  
        \geq&  - 2\beta u_{ii} d_i ^2  + (1+ \beta d)\z[-\sum_{j} {u_{jii} d_j }  - 2u_{ii} d_{ii} -\pp_{u} u_{ii}\x] \\
        &+ \z(A + \frac{M} {2}\x)h_{ii} - C_{3}(\ooo,|Du|_{C^0},|\pp|_{C^2})(1+\bb). \nonumber
    \end{align}
    Contacting $S_k^{ii}$ in both sides of \eqref{eq elliptic lower, 0 geq widetilde{P}_{ii}} and using $\sum_{i=1}^n S_k^{ii}\geq c_0(n,k,\inf f )$, it follows that
    \begin{align}\label{eq elliptic upper bound 0 geq I+II+III+IV}
        0 \geq \sum_{i} S_k^{ii} \widetilde{P}_{ii} 
        \geq&  - 2\beta \sum_{i} {S_k^{ii} u_{ii} d_i ^2 }  - 2(1+ \beta d)\sum_{i} {S_k^{ii} u_{ii} d_{ii} } -(1+\bb d)\pp_u \sum_{i} S_k^{ii} u_{ii} \nonumber \\
        &+ \z[\z(A + \frac{M} {2}\x)c_1  - C_{4}(1+\bb)\x]\sum_{i} {S_k^{ii} }  \notag \\
        =& \operatorname{I}+\operatorname{II}+\operatorname{III}+\operatorname{IV}, 
    \end{align}
    where the constant $C_{4}$ depends on $n, k, \ooo, |Du|_{C^0}, \inf f, |f|_{C^1}$, and $|\pp|_{C^2}$.

    We partition the indices $1\leq i\leq n$ into two categories to analyze the term $\operatorname{I}$.
    Let $B = \z\{ i:\beta d_i ^2  < \frac{1}{n},1 \leq i \leq n\x\}$, and $G = \z\{ i:\beta d_i ^2 \geq  \frac{1}{n},1 \leq i \leq n\x\}$.
    By choosing $\bb=\frac{1}{\mu} > 1$, we ensure that $G$ is nonempty. Furthermore, there exists an index $\tilde{i}_0\in G$, and without loss of generality, we may assume $\tilde{i}_0=1$
    \begin{align*}
        d_{1} ^2 \geq \frac{1}{n}.
    \end{align*}
    From \eqref{eq elliptic lower, 0=widetilde{P}_{i}}, we have
    \begin{align*}
        u_{i i}  = -\frac{{1-2Kd}}{1+ \beta d}\z(A + \frac{M}{2}\x) + \frac{{\beta \z[-\sum_{j}  {u_j d_j } - \pp \x]}}{1+ \beta d} + \frac{{-\sum_{j}  { u_j d_{ji}}  - D_i\pp }}{{d_{i } }}.
    \end{align*}
    We choose a sufficiently large $A>0$, such that for any $i \in G$
    \begin{align*}
        \z| \frac{{\beta [-\sum_{j} u_j d_j-\pp]}}{1+ \beta d} + \frac{{-\sum_{j}u_j d_{ji}- D_i\pp }}{{d_{i } }}\x| 
        \leq \z(\bb+\sqrt {n\beta}\x)C_{5} \leq \frac{A}{8},
    \end{align*}
    where $C_{5}$ depends on $\ooo$, $|Du|_{C^0}$, and $|\pp|_{C^1}$. Hence
    \begin{align}\label{eq elliptic lower bound u_{ii}<0}
        - \frac{9A}{8} -\frac{M}{2} \leq u_{i i}\leq -\frac{A+M}{8}, \quad \te{for any } i \in G.
    \end{align}

    By applying Lemma \ref{lem 2.4} and \ref{lem elliptic global C^2 reduce to boundary} together with \eqref{eq elliptic lower bound u_{ii}<0}, we derive
    \begin{align}\label{eq elliptic lower bound-I}
        \operatorname{I}=& - 2\bb \sum_{i\in G}S_k^{ii} u_{ii} d_i^2 - 2\bb \sum_{i\in B}S_k^{ii} u_{ii} d_i^2 \geq -\frac{2\bb}{n}S_k^{11}u_{11}-C_{6}(1+M)\sum_{i} S_k^{ii} \nonumber \\
        \geq & \frac{\bb}{4n}c_{n,k} \z(A+M\x)\sum_{i} S_k^{ii}-C_{6}(1+M)\sum_{i} S_k^{ii},
    \end{align}
    where $c_{n,k}=\frac{1}{n-k+2}$ and $C_{6}$ is independent of $|u|_{C^0}$. Furthermore,
    \begin{align}\label{eq elliptic lower bound-II+III}
        \operatorname{II}+\operatorname{III}\geq -C_{7}(1+M)\sum_{i} S_k^{ii}\quad \te{and} \quad 
        \operatorname{IV}\geq (Ac_1-2C_{4}\bb)\sum_{i} S_k^{ii}.
    \end{align}

    Thus, substituting \eqref{eq elliptic lower bound-I} and \eqref{eq elliptic lower bound-II+III} into \eqref{eq elliptic upper bound 0 geq I+II+III+IV}, we obtain
    \begin{align*}
        0  
        \geq&  \z[\frac{\beta c_{n,k}}{4n}-C_{6}-C_7\x] \z(A+M\x)\sum_{i} S_k^{ii } +(Ac_1-2 C_{4}\bb) \sum_{i} S_k^{ii}   
        > 0,  
    \end{align*}
    provided $\bb>\frac{4 n(C_{6}+C_7)}{c_{n,k}}$ and $A>\max\z\{\frac{2 C_{4}\bb}{c_1},1\x\}$.

    In summary, if the positive constants $\bb$ and $A$ satisfy the following conditions:
    $$
    \bb=\frac{1}{\mu} >\frac{4 n(C_{6}+C_7)}{c_{n,k}} \quad \te{and} \quad A>\max\z\{1, \frac{16}{7}\bb C_1, \frac{2 C_{4} \bb}{c_1}, 8\z(\bb+\sqrt{n\bb}\x)C_{5}\x\},
    $$
    then $\widetilde{P}$ attains its maximum only on $\pa \ooo$. This completes the proof. 
\end{proof}

\begin{proof}[Proof of Theorem $\ref{thm elliptic C^2 estimates}$]
    Combining Lemmas \ref{lem elliptic global C^2 reduce to boundary}, \ref{lem elliptic u_nu nu leq C}, and \ref{lem elliptic u_nu nu geq -C}, we complete the proof of Theorem \ref{thm elliptic C^2 estimates}.
\end{proof}

\section{Existence of the Boundary Problems}\label{sec-existence}
\setcounter{equation}{0}

In this section, we complete the proof of the Theorems \ref{thm 1.2-Neumann for elliptic sum} and \ref{thm 1.3-classical Neumann for elliptic sum}.

\subsection{Proof of Theorem \ref{thm 1.2-Neumann for elliptic sum}}

For the Neumann boundary value problem of the sum Hessian equations \eqref{eq-Neumann for elliptic sum (thm 1.2)}, we have derived a global $C^2$ estimates in Sections \ref{sec 3-C^0 and C^1} and \ref{sec 4-second derivative}. Therefore, the sum Hessian equations \eqref{eq-Neumann for elliptic sum (thm 1.2)} are uniformly elliptic in $\overline{\ooo}$. By Lemma \ref{lem 2.2} $(b)$, it follows that $\z(S_k\x)^{\frac{1}{k}}$ is concave in $\tilde{\gaa}_{k}$, Consequently, we obtain the following global H\"{o}lder estimate (see \cite[Theorem 1.1]{Lieberman-Trudinger-1986-TAMS}):
\begin{equation*}
    |u|_{C^{2,\ga}(\overline{\ooo})}\leq C,
\end{equation*}
where the positive constants $C$ and $\ga$ depend on $n$, $k$, $c_{\pp}$, $a_{\ka}$, $c_{\ka}$, $\ooo$, $\inf f$, $|f|_{C^2}$, and $|\pp|_{C^3}$. Finally, employing the method of continuity (see \cite[Theorem 17.28]{GT}), we establish the existence result, thereby completing the proof of Theorem \ref{thm 1.2-Neumann for elliptic sum}.

\subsection{Proof of Theorem \ref{thm 1.3-classical Neumann for elliptic sum}}
    Assume $u_\es$ is the $(k-1)$-admissible solution to \eqref{eq elliptic perturbed Neumann problem}. 
    Define $w_\es=u_\es-\frac{1}{|\ooo|}\int_{\ooo} u_\es dx$. Then $w_\es$ is a $(k-1)$-admissible solution to
    \begin{equation}\label{eq subsec 5.2}
        \left\{\begin{aligned}
            & \si_k(D^2 w_\es)+\al \si_{k-1}(D^2 w_\es)=f(x) && \te{in } \ooo, \\
            & (w_\es)_\nu= -\es w_\es-\frac{1}{|\ooo|}\int_{\ooo} \es u_\es dx+\pp(x) && \te{on } \pa \ooo.
        \end{aligned}\right.
    \end{equation}

    Following the proof of Theorem \ref{thm elliptic C^0}, we obtain $|\es u_\es|_{C^0(\overline{\ooo})}\leq \overline{M}_0$. By the uniform convexity of $\ooo$ and adapting the argument of \cite[Proposition 5]{Qiu-Xia-IMRN-2019}, we derive 
    $$|Du_\es|_{C^0(\overline{\ooo})} + \z|w_\es \x|_{C^0(\overline{\ooo})}\leq \overline{M}_1,
    $$
    
    Applying Theorem \ref{thm elliptic C^2 estimates}, we further obtain $|D^2 u_\es|_{C^0(\overline{\ooo})}\leq \overline{M}_2$. Crucially, the constants $\overline{M}_0$, $\overline{M}_1$ and $\overline{M}_2$ are independent of $\es$ and $|u_\es|_{C^0}$. The existence of a solution to \eqref{eq subsec 5.2} then follows from Schauder estimates \cite{Lieberman-Trudinger-1986-TAMS} and standard elliptic theory. 
    
    By passing to a subsequence (still denoted by $\es \to 0$), we find that $w_\es \to \bar{u}$ in $C^2(\overline{\ooo})$ and $-\es w_\es-\frac{1}{|\ooo|}\int_{\ooo} \es u_\es \to s$ for some $s\in \mathbb{R}$. The pair $(s,\bar{u})$ thus solves \eqref{eq-classical Neumann for elliptic sum (thm 1.3)}. The uniqueness follows from the maximum principle and Hopf's lemma, see \cite{Qiu-Xia-IMRN-2019} for details.

\section{Neumann problem for parabolic sum Hessian equation}\label{sec-parabolic}
\setcounter{equation}{0}

In this section, we investigate the Neumann problem for the parabolic sum Hessian equation \eqref{eq parabolic sum hessian-1} and prove Theorem \ref{thm 1.5-parabolic}. First, we denote
$$
\widetilde{F}(D^2 u)=\log S_k(D^2 u ),\ \tilde{f}=\log f,\ \widetilde{F}^{ij}(D^2 u)=\frac{\pa \widetilde{F}(D^2 u )}{\pa u_{ij}}, \te{ and } \widetilde{F}^{ij,pq}=\frac{\pa^2 \widetilde{F}(D^2 u)}{\pa u_{ij}\pa u_{pq}}.
$$

\subsection{$u_t$ and $C^0$ estimates}

By applying the classical maximum principle and Hopf's lemma for parabolic equations, we derive the following estimate for $u_t$, which consequently yields the $C^0$ estimate. See \cite{chen-ma-zhang-2021} for details.
\begin{lemma}[Lemma 3.1 in \cite{chen-ma-zhang-2021}]\label{lem parabolic u_t estimate}
    Let $\Omega \subset \mathbb{R}^n$ be a $C^3$ domain, and let $f,\pp
    $ satisfy \eqref{condition f>0} and $\pp_u\leq 0$. Suppose $u \in C^3(\overline{\ooo}\times[0,T))$ is a $(k-1)$-admissible solution to \eqref{eq parabolic sum hessian-1}. Then 
    \par
    \noindent 
    $(1)$ 
    $\min\z\{\min\limits_{\overline{\ooo}} u_t(\cdot,0),0\x\} \leq u_t(x,t)\leq  \max\z\{\max\limits_{\overline{\ooo}}u_t(\cdot,0),0\x\}, \ \forall \ (x,t)\in \ooo\times (0,T)$.
    \par
    \noindent 
    $(2)$ 
    If $f$ satisfies \eqref{condition c_f}, then for any $0\leq r\leq  c_f$, we have
    \begin{equation}\label{eq parabolic u_t e^{r t}}
        \min\z\{\min_{\overline{\ooo}} u_t(\cdot,0),0\x\}\leq u_t(x,t)e^{r t}\leq \max\z\{\max_{\overline{\ooo}}u_t(\cdot,0),0\x\}, \ \forall \ (x,t)\in \overline{\ooo}\times (0,T).
    \end{equation}
    \par
    \noindent 
    $(3)$
    If \eqref{condition S_k(D^2 u_0)} holds, then $u_t \equiv 0$ or $u_t(x,t)>0$ for all $(x,t)\in \overline{\ooo}\times (0,T)$.
\end{lemma}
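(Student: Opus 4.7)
The plan is to differentiate \eqref{eq parabolic sum hessian-1} once in time, derive a linear parabolic equation satisfied by $v:=u_t$ with a linear oblique boundary condition, and then invoke the parabolic maximum principle together with Hopf's lemma. Differentiation produces
\begin{equation*}
v_t - \widetilde F^{ij}(D^2 u)\,v_{ij} + \frac{f_u(x,u)}{f(x,u)}\,v = 0 \ \text{in } \ooo\times(0,T),\quad v_\nu = \pp_u(x,u)\,v \ \text{on } \pa\ooo\times[0,T).
\end{equation*}
Under \eqref{condition f>0}, the zeroth order coefficient $c:=f_u/f$ is nonnegative, and $(\widetilde F^{ij}(D^2 u))$ is positive definite because $u$ is $(k-1)$-admissible (Lemma \ref{lem 2.2}\,(a)), so the operator $L := \pa_t - \widetilde F^{ij}\pa_{ij} + c$ is uniformly parabolic.

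For Part $(1)$ I would argue by contradiction. Setting $M_+:=\max\{\max_{\overline\ooo} v(\cdot,0),\,0\}$, suppose there exists a first time $t_0\in(0,T)$ and a point $x_0\in\overline\ooo$ at which $v(x_0,t_0)>M_+\geq 0$. If $x_0\in\ooo$, the strong maximum principle applied to $Lv=0$ with $c\geq 0$ forces $v$ to be identically equal to $v(x_0,t_0)$ on the parabolic component below $t_0$, contradicting the choice of $t_0$. If $x_0\in\pa\ooo$, Hopf's lemma gives $v_\nu(x_0,t_0)>0$, whereas the boundary identity yields $v_\nu=\pp_u v\leq 0$ (using $\pp_u\leq 0$ and $v(x_0,t_0)>0$), again a contradiction. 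The lower bound follows from the same reasoning applied to $-v$: the equation is linear, the coefficient $c$ is unchanged, and $\pp_u\leq 0$ still aligns properly with the sign of $-v$.

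For Part $(2)$ I would introduce $w:=v\,e^{rt}$. A direct computation using \eqref{condition c_f} shows
\begin{equation*}
w_t - \widetilde F^{ij}(D^2 u)\,w_{ij} + \Bigl(\frac{f_u}{f}-r\Bigr)w = 0, \qquad w_\nu = \pp_u w,
\end{equation*}
where the new zeroth-order coefficient $(f_u/f-r)\geq c_f-r\geq 0$ is still nonnegative. Since $w(\cdot,0)=u_t(\cdot,0)$, the argument of Part $(1)$ applied verbatim to $w$ yields \eqref{eq parabolic u_t e^{r t}}.

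For Part $(3)$, the hypothesis \eqref{condition S_k(D^2 u_0)} gives $v(\cdot,0)=\log S_k(D^2 u_0)-\log f(x,u_0)\geq 0$, and Part $(1)$ then ensures $v\geq 0$ throughout $\overline\ooo\times[0,T)$. If $v(x_0,t_0)=0$ at some interior $x_0\in\ooo$ with $t_0>0$, the strong maximum principle applied to the nonnegative solution $v$ of $Lv=0$ forces $v\equiv 0$ on $\overline\ooo\times[0,t_0]$, and uniqueness for \eqref{eq parabolic sum hessian-1} extends this to all $t$. If instead $v$ first vanishes at a boundary point $x_0\in\pa\ooo$, Hopf's lemma would yield $v_\nu(x_0,t_0)<0$, contradicting the identity $v_\nu=\pp_u v=0$ (since $v(x_0,t_0)=0$) unless $v$ is constant on the parabolic component, again giving $v\equiv 0$. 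The main delicacy lies precisely in this boundary step: Hopf's strict inequality has to be reconciled with $v_\nu=\pp_u v$, and this works because the sign condition $\pp_u\leq 0$ aligns $v_\nu$ with the sign of $v$ so that the two equalities can hold simultaneously only in the trivial case $v\equiv 0$.
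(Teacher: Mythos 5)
Your proposal is correct and follows exactly the route the paper indicates (and which the cited Lemma 3.1 of \cite{chen-ma-zhang-2021} uses): differentiate the equation in $t$ to get a linear uniformly parabolic equation for $v=u_t$ with zeroth-order coefficient $f_u/f\geq 0$ and boundary relation $v_\nu=\pp_u v$, then apply the strong maximum principle and Hopf's lemma, with the exponential weight $e^{rt}$ handling part $(2)$. The only cosmetic point is the ``first time'' phrasing in part $(1)$, which is more cleanly stated as taking the maximum of $v$ over $\overline\ooo\times[0,T_0]$ for arbitrary $T_0<T$; the substance of the argument is unaffected.
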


\begin{lemma}[Theorem 4.1 in \cite{chen-ma-zhang-2021}]\label{lem parabolic C^0}
    Let $\ooo\subset \rr^n$ be a $C^3$ domain, and let $f,\pp 
    $ satisfy \eqref{condition f>0} and \eqref{condition c_pp}. Suppose $u\in C^3(\overline{\ooo}\times [0,T))$ is a $(k-1)$-admissible solution to \eqref{eq parabolic sum hessian-1}. Furthermore, if $f$ satisfies \eqref{condition c_f} or $u_0$ satisfies \eqref{condition S_k(D^2 u_0)}, then 
    \begin{equation*}
        |u(x,t)|\leq N_0,\quad \te{for any }  (x,t)\in \ooo\times [0,T),
    \end{equation*}
    where the constant $N_0>0$ depends on $c_f$, $c_\pp$, $|u_0|_{C^0}$, $|u_t(\cdot,0)|_{C^0}$, and $|\pp(\cdot,0)|_{C^0}$.
\end{lemma}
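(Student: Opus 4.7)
The plan is to split the argument into two mutually exclusive scenarios according to which alternative hypothesis holds, and in each scenario leverage the appropriate part of Lemma \ref{lem parabolic u_t estimate}. Throughout, the key is that Lemma \ref{lem parabolic u_t estimate} gives strong control on $u_t$, so the $C^0$ bound will reduce either to a simple time integration or to a boundary-maximum argument for a monotone-in-$t$ family.

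In the first scenario, when $f$ satisfies \eqref{condition c_f}, I would apply part $(2)$ of Lemma \ref{lem parabolic u_t estimate} with $r=c_f$, obtaining the pointwise exponential decay estimate $|u_t(x,t)|\leq |u_t(\cdot,0)|_{C^0}\,e^{-c_f t}$ on $\overline{\ooo}\times[0,T)$. Integrating along each $t$-fibre then gives
\begin{equation*}
|u(x,t)-u_0(x)|\leq \int_0^t |u_t(x,s)|\,ds\leq \frac{|u_t(\cdot,0)|_{C^0}}{c_f},
\end{equation*}
so $|u(x,t)|\leq |u_0|_{C^0}+|u_t(\cdot,0)|_{C^0}/c_f$, which is the desired $N_0$ with precisely the advertised dependence.

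In the second scenario, when $u_0$ satisfies \eqref{condition S_k(D^2 u_0)}, part $(3)$ of Lemma \ref{lem parabolic u_t estimate} forces $u_t\geq 0$, and monotonicity in time yields the lower bound $u(x,t)\geq u_0(x)\geq -|u_0|_{C^0}$ for free. For the upper bound I would exploit the $(k-1)$-admissibility: since $k\geq 2$, one has $\tilde\Gamma_k\subseteq\Gamma_{k-1}\subseteq\Gamma_1$, so $\Delta u=\sigma_1(D^2 u)>0$ on $\ooo$, whence $u(\cdot,t)$ is strictly subharmonic and its spatial maximum over $\overline{\ooo}$ is attained at some $x_0(t)\in\pa\ooo$, with $u_\nu(x_0(t),t)\geq 0$ following from the boundary-maximum property. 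Writing $M(t)=u(x_0(t),t)$, the bound is trivial when $M(t)\leq 0$, and otherwise the Neumann condition combined with \eqref{condition c_pp} gives
\begin{equation*}
0\leq \pp(x_0(t),M(t)) = \pp(x_0(t),0)+\int_0^{M(t)} \pp_s(x_0(t),s)\,ds \leq |\pp(\cdot,0)|_{C^0} + c_\pp M(t),
\end{equation*}
forcing $M(t)\leq |\pp(\cdot,0)|_{C^0}/|c_\pp|$, which completes the upper bound uniformly in $t$.

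Given Lemma \ref{lem parabolic u_t estimate}, the entire argument is essentially routine; no auxiliary barriers or fine Hessian algebra are invoked. The only delicate conceptual point lies in the second scenario, where although $u_t\geq 0$ lets $u$ grow pointwise in time, one must recognise that the dissipative sign condition $\pp_u\leq c_\pp<0$ in the Neumann datum supplies a time-uniform \emph{ceiling} for the spatial maxima via the boundary-maximum argument. I do not anticipate any further technical obstacle.
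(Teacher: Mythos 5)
Your proposal is correct, and it follows essentially the same route as the paper, which does not reprove this lemma but invokes Theorem 4.1 of \cite{chen-ma-zhang-2021}: under \eqref{condition c_f} one integrates the exponential decay of $u_t$ from Lemma \ref{lem parabolic u_t estimate}$(2)$, and under \eqref{condition S_k(D^2 u_0)} one combines $u_t\geq 0$ from Lemma \ref{lem parabolic u_t estimate}$(3)$ with the subharmonicity of $u(\cdot,t)$ and the sign condition $\pp_u\leq c_\pp<0$ at a spatial boundary maximum. The stated dependencies of $N_0$ are exactly those produced by your argument, so no gap remains.
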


\subsection{Gradient estimate}

We denote $N_0=\sup_{\ooo\times [0,T)} |u|$, $N_1=\sup_{\ooo\times[0,T)}|u_t|$. It follows from \eqref{eq parabolic sum hessian-1} that
\begin{align*}
    0<\tilde{c}_0=:e^{-N_1} \cdot\inf_{\overline{\ooo}\times [-N_0,N_0]} f <S_k(D^2 u)=f(x,u)e^{u_t}\leq  e^{N_1} \cdot\sup_{\overline{\ooo}\times[-N_0,N_0]}f:=\tilde{c}_1.
\end{align*}
We now proceed to establish a global gradient estimate. The proof consists of two steps: first, deriving an interior gradient estimate, and second, obtaining a gradient estimate near the boundary.

\begin{theorem}\label{thm parabolic interior gradient}
    Let $\ooo\subset \rr^n$ be a $C^3$ domain, $f>0$. If $u\in C^3(\overline{\ooo}\times [0,T))$ is a $(k-1)$-admissible solution to \eqref{eq parabolic sum hessian-1}. Then, we have
    \begin{equation*}
        \sup_{(\ooo\xg \ooo_\mu)\times [0,T)}
        |Du|\leq N_2, 
    \end{equation*}
    where $N_2>0$ depends on $n$, $k$, $\mu$, $\tilde{c}_0$, $\tilde{c}_1$, $N_0$, $N_1$, $|Du_0|_{C^0}$, $\inf f$, $|f|_{C^0}$, and $|D_xf|_{C^0}$. 
\end{theorem}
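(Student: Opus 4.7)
The plan is to mimic the elliptic interior gradient proof of Theorem \ref{thm elliptic interior gradient} (the Chou--Wang style auxiliary function), adding the time derivative as a parabolic correction that is controlled by the $u_t$ bound from Lemma \ref{lem parabolic u_t estimate}. Fix $z_0\in \ooo \xg \ooo_\mu$ and $0<T'<T$, and consider
\begin{equation*}
    G(x,t)=|Du|\,\p(u)\,\rho(x), \qquad (x,t)\in \overline{B_\mu(z_0)}\times[0,T'],
\end{equation*}
with $\p(u)=(3N_0-u)^{-\frac{1}{2}}$ and $\rho(x)=\mu^2-|x-z_0|^2$. Since $G$ vanishes on the lateral boundary $\pa B_\mu(z_0)\times[0,T']$ and is bounded on $\overline{B_\mu(z_0)}\times\{0\}$ by $|Du_0|_{C^0}\mu^2\sup\p$, I may assume the maximum of $G$ is attained at some interior space--time point $(x_0,t_0)\in B_\mu(z_0)\times(0,T']$. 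Rotate coordinates so that $u_1(x_0,t_0)=|Du|>0$ and $\{u_{ij}(x_0,t_0)\}_{2\leq i,j\leq n}$ is diagonal, and set $\PPP=\log G$.

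At $(x_0,t_0)$ we have $\na \PPP=0$, $\PPP_t\geq 0$ and $\widetilde{F}^{ij}\PPP_{ij}\leq 0$, so the parabolic operator $\mc{L}:=\pa_t-\widetilde{F}^{ij}\pa_{ij}$ satisfies $\mc{L}\PPP\geq 0$. Differentiating \eqref{eq parabolic sum hessian-1} in the direction $x_1$ gives the crucial linearization
\begin{equation*}
    u_{t1}-\widetilde{F}^{ij}u_{ij1}=-\tilde{f}_{x_1}-\tilde{f}_u\, u_1,
\end{equation*}
and expanding $\mc{L}\PPP$ as in the elliptic case (using $u_i=0$ for $i\neq 1$) yields
\begin{equation*}
    \z(\frac{\p''}{\p}-\frac{(\p')^2}{\p^2}\x)\widetilde{F}^{11}u_1^2 \leq \widetilde{F}^{ij}\frac{u_{1i}u_{1j}}{u_1^2}-\frac{\tilde{f}_{x_1}}{u_1}-\tilde{f}_u+\frac{\p'}{\p}u_t-\frac{\p'}{\p}\widetilde{F}^{ij}u_{ij}-\frac{\widetilde{F}^{ij}\rho_{ij}}{\rho}+\widetilde{F}^{ij}\frac{\rho_i\rho_j}{\rho^2}.
\end{equation*}
Apart from the single extra term $(\p'/\p)u_t$, this is precisely the inequality used in (3.3)--(3.4) of the elliptic proof, and the extra term is bounded by $N_1/(4N_0)$ by Lemma \ref{lem parabolic u_t estimate} and the choice of $\p$.

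The rest of the argument then runs in parallel with Theorem \ref{thm elliptic interior gradient}. Assuming $|Du(z_0,t_0)|\geq 24\sqrt{2}\,N_0/\mu$, the identity $\na\PPP=0$ together with $\PPP(x_0,t_0)\geq\PPP(z_0,t_0)$ forces $u_{11}(x_0,t_0)<0$, so Lemma \ref{lem 2.4} gives $S_k^{11}\geq c_{n,k}\sum_i S_k^{ii}$. Because $S_k(D^2 u)=f e^{u_t}\in[\tilde{c}_0,\tilde{c}_1]$, Lemma \ref{lem 2.3} yields $S_k^{11}\geq c_0>0$ and $\sum_i S_k^{ii}\geq c_0$ with $c_0=c_0(n,k,\tilde{c}_0)$, hence $\widetilde{F}^{11}\geq c_0/\tilde{c}_1$ and $\sum_i \widetilde{F}^{ii}\geq c_0/\tilde{c}_1$. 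Combining this with the Lemma \ref{lem 2.6} bound on $\widetilde{F}^{ij}u_{ij}$ reduces the displayed inequality to
\begin{equation*}
    \frac{c_{n,k}}{N_0^2}\,u_1^2 \leq C\z(1+\frac{u_1}{\rho}+\frac{1}{\rho}+\frac{\mu^2}{\rho^2}\x),
\end{equation*}
from which $\rho(x_0,t_0)\,u_1(x_0,t_0)\leq C$ follows. The maximality $G(z_0,t_0)\leq G(x_0,t_0)$ then delivers $|Du(z_0,t_0)|\leq N_2$, uniformly in $z_0$ and $T'$.

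The main obstacle I anticipate is purely bookkeeping: one must verify that the parabolic corrections, namely $\PPP_t$ and the $(\p'/\p)u_t$ factor, are genuinely absorbed into a constant that depends only on the data listed in the theorem and not on $T$. This is ensured by the $t$-independent bound $|u_t|\leq N_1$ provided by Lemma \ref{lem parabolic u_t estimate}, so no new analytic difficulty appears beyond the elliptic case.
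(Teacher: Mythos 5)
Your proposal is correct and follows essentially the same route as the paper's proof: the same auxiliary function $G=|Du|\,\p(u)\,\rho(x)$ on a parabolic cylinder, the same critical-point computation for $\PPP=\log G$ in which the only new terms $\PPP_t$ and $(\p'/\p)u_t$ are absorbed using the $t$-independent bound $|u_t|\leq N_1$, and the same use of Lemmas \ref{lem 2.3}, \ref{lem 2.4}, \ref{lem 2.6} together with the elliptic argument of Theorem \ref{thm elliptic interior gradient} to force $u_{11}<0$ and conclude $\rho(x_0)u_1(x_0,t_0)\leq C$. The only cosmetic point is the final step: compare $G(z_0,t)\leq G(x_0,t_0)$ for every $t\in[0,T']$ (not only $t=t_0$) so that the bound covers all times, exactly as the paper does.
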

\begin{proof}
    The proof follows a similar approach to that of Theorem \ref{thm elliptic interior gradient}. Below, we provide a sketch of the proof.

    For any $z_0 \in \ooo\xg \ooo_{\mu}$ and $T_0\in (0,T)$, we  introduce the auxiliary function
    \begin{equation*}
        G(x,t)=|Du| \p(u) \rho(x), \quad \te{in } B_\mu(z_0) \times [0,T_0],
    \end{equation*}
    where $\p(u)=(3N_0-u)^{-\frac{1}{2}}$ and $\rho(x)=\mu^2-|x-z_0|^2$. 
    Assume $G$ attains its maximum at $(x_0,t_0)\in B_\mu(z_0)\times [0,T_0]$. Without loss of generality, we may assume $t_0>0$. Otherwise, 
    $$
    |Du(z_0,t)|\leq\sqrt{2} |Du(x_0,0)|\leq \sqrt{2}|Du_0|_{C^0}\quad \te{for all } t\in[0,T_0].
    $$
    
    We choose the coordinates such that $u_1=|Du|>0$ and $\{u_{ij}\}_{2\leq i,j\leq n}$ is diagonal at $(x_0,t_0)$. 
    Under this coordinate system, the function
    $$
    \PPP(x,t)=\log u_1+\log\p(u)+\log \rho(x)
    $$
    attains a local maximum at $(x_0,t_0)$. All subsequent calculations are evaluated at $ (x_0,t_0)$. Then 
    $$
    0= \PPP_i =  \frac{u_{1i}}{u_1}+\frac{\p'}{\p}u_i +\frac{\rho_i}{\rho} \quad \te{and} \quad
    0\leq \PPP_t=  \frac{u_{1t}}{u_1}+\frac{\p'}{\p} u_t.
    $$
    Using $\widetilde{F}^{ij}u_{ij1}-u_{t1}=\tilde{f}_{x_1}+\tilde{f}_u u_1$ and $\frac{\p'}{\p}\widetilde{F}^{ij}u_{ij}\geq -\frac{c_{n,k}}{N_0}\sum_{i=1}^n\widetilde{F}^{ii}$ (Lemma \ref{lem 2.6}), we have
    \begin{align} 
        0\geq  \widetilde{F}^{ij} \PPP_{ij}-\PPP_t= & \frac{1}{u_1}(\widetilde{F}^{ij} u_{1ij}-u_{1t})-\widetilde{F}^{ij} \frac{u_{1i}u_{1j}}{u_1^2}+\z[\frac{\p''}{\p} -\frac{(\p')^2}{\p^2}\x]\widetilde{F}^{11} u_1^2 \nonumber \\
        &  +\frac{\widetilde{F}^{ij}\rho_{ij}}{\rho}-\widetilde{F}^{ij} \frac{\rho_i \rho_j}{\rho^2} +\frac{\p'}{\p}(\widetilde{F}^{ij} u_{ij}-u_t)\nonumber \\
        \geq & \frac{\tilde{f}_{x_1}}{u_1}+\tilde{f}_u+\frac{\widetilde{F}^{11}u_1^2}{64 N_0^2}-\z[\frac{u_1}{2N_0}\frac{\mu}{\rho}+\frac{2}{\rho}+8\frac{\mu^2 }{\rho^2 }+\frac{c_{n,k}}{N_0}  \x]\sum_{i=1}^n\widetilde{F}^{ii} -\frac{|u_t|}{4N_0}. \label{eq 6.2}
    \end{align}

    We may assume $|Du(z_0,t_0)|\geq 24\sqrt{2}\frac{N_0}{\mu}$, this implies that $u_{11}<0$. Following the arguments in Theorem \ref{thm elliptic interior gradient} and applying Lemma \ref{lem 2.3}, we obtain $\widetilde{F}^{11}\geq c_{n,k}\sum_{i} \widetilde{F}^{ii}$ and
    \begin{equation}\label{eq parabolic sum geq tilde c_2} 
        \sum_{i} \widetilde{F}^{ii} \geq \frac{1}{\tilde{c}_1}c_0(n,k,\tilde{c}_0):=\tilde{c}_2(n,k,\tilde{c}_0,\tilde{c}_1)>0,
    \end{equation}
    where $\tilde{c}_0\leq S_k\leq \tilde{c}_1$.
    Consequently, \eqref{eq 6.2} yields the inequality
    \begin{align*}
        0\geq -\frac{|\tilde{f}_{x_1}|}{u_1} -|\tilde{f}_u|-\frac{|u_t|}{4N_0}+\z[\frac{c_{n,k}}{N_0^2}u_1^2-\frac{\mu}{2N_0}\frac{u_1}{\rho}-\frac{c_{n,k}}{N_0} -\frac{2}{\rho}-8\frac{\mu^2 }{\rho^2 } \x]\tilde{c}_2.
    \end{align*}
    This implies that $\rho(x_0)u_{1}(x_0,t_0)\leq C$, and consequently, $\sup_{t\in [0,T_0]}|Du(z_0,t)|\leq N_2$.
    Since $z_0\in \ooo\xg \ooo_{\mu}$ and $T_0\in(0,T)$ are arbitrary, and $N_2$ is independent of $z_0$ and $T_0$, the proof is complete.
\end{proof}

In the following, we also prove the near boundary gradient estimate for oblique boundary value problem.
\begin{theorem}\label{thm parabolic near boundary gradient}
    Let $\ooo\subset \rr^n $ be a $C^3$ domain, $f>0$. If $u \in C^3(\overline{\ooo}\times [0,T))$ is the $(k-1)$-admissible solution to sum Hessian equation 
    \begin{equation*}
        \z\{\begin{aligned}
            & u_t=\log S_k(D^2u)- \log f(x,u)&&  \te{in }\ooo \times[0,T), \\
            & u_\bb=\pp(x,u)&&  \te{on }\pa \ooo\times[0,T), \\
            & u(x,0)=u_0(x) && \te{in } \ooo,
        \end{aligned}\x.
    \end{equation*}
    where $\bb\in C^{3}(\pa \ooo)$ is a unit vector field of $\pa \ooo$ with $\z<\bb,\nu\x>\geq \de_0$ for some $\de_0>0$, $\nu$ is the outer unit normal vector field of $\pa \ooo$. Then there exists a small positive constant $\mu$, depending on $n,k,\de_0, \ooo, \tilde{c}_0, \tilde{c}_1, N_0, N_1, |Du_0|_{C^0}, \inf f, |f|_{C^1},|\pp|_{C^3}$, and $|\bb|_{C^3}$, such that
    \begin{equation}\label{eq parabolic near boundary gradient}
        \sup_{\ooo_\mu\times [0,T)}
        |Du|\leq \max \{N_2,N_3\}, 
    \end{equation}
    where $N_2$ is defined in Theorem $\ref{thm parabolic interior gradient}$, the positive constant $N_3$ depends on $n$, $k$, $\mu$, $\de_0$, $\ooo$, $\tilde{c}_0$, $\tilde{c}_1$, $N_0$, $N_1$, $|Du_0|_{C^0}$, $\inf f$, $|f|_{C^1}$, $|\pp|_{C^3}$, and $|\bb|_{C^3}$. 
\end{theorem}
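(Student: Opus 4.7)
The plan is to adapt the elliptic near-boundary gradient argument of Theorem \ref{thm elliptic near boundary gradient} to the parabolic setting, exploiting that the operator $\widetilde{F}^{ij}u_{ij}-u_t=\tilde{f}(x,u)$ is concave in $D^2u$ and that time derivatives of the already-known quantities are under control. As before, I would extend $\bb$ smoothly to $\ooo_\mu$ so that $\langle \bb,-Dd\rangle=\cos\theta\geq\delta_0$, set
$$
w=u+\frac{\pp\, d}{\cos\theta},\qquad \phi=|Dw|^2-(Dw\cdot Dd)^2=\sum_{i,j}C^{ij}w_iw_j,
$$
and consider the auxiliary function
$$
\PP(x,t)=\log\phi+g(u)+\alpha_0 d,\qquad g(u)=-\tfrac{1}{2}\log(1+N_0-u),
$$
on $\overline{\ooo_\mu}\times[0,T_0]$ for an arbitrary $T_0\in(0,T)$, with $\alpha_0>0$ to be chosen.

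The maximum of $\PP$ occurs in one of four places. If it is attained at $t=0$, the bound is controlled by $|Du_0|_{C^0}$. If it is attained on $\partial\ooo_\mu\cap\ooo$, Theorem \ref{thm parabolic interior gradient} gives the estimate by $N_2$. If it is attained on $\pa\ooo$, the argument is purely spatial and uses only the Neumann boundary condition, so the elliptic computation from \cite[Theorem 4.1]{Wang Peihe-2022} (as cited in the proof of Theorem \ref{thm elliptic near boundary gradient}) goes through verbatim and fixes $\alpha_0=\alpha_0(\pa\ooo,\delta_0,|\bb|_{C^2},|\pp|_{C^0})>0$. The remaining case, an interior maximum at $(x_0,t_0)\in\ooo_\mu\times(0,T_0]$, is where the parabolic modification matters: at $(x_0,t_0)$ we have $\PP_i=0$, $\PP_t\geq0$, and $\widetilde{F}^{ij}\PP_{ij}-\PP_t\leq0$.

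Choosing coordinates so that $D^2u(x_0,t_0)$ is diagonal and reducing as in Theorem \ref{thm elliptic near boundary gradient} to the comparability $w_1\sim u_1\sim T_1\sim|Dw|\sim|Du|\sim\phi^{1/2}$, the first-order identity $\sum_l T_l w_{l1}=-\tfrac{\phi}{2}(g'u_1+\alpha_0 d_1)-\tfrac{1}{2}\sum_{i,j}C^{ij}_{,1}w_iw_j$ forces $u_{11}<0$ for $\mu$ small, so Lemma \ref{lem 2.4} yields $\widetilde{F}^{11}\geq c_{n,k}\sum_i\widetilde{F}^{ii}$. Expanding $\widetilde{F}^{ij}\PP_{ij}-\PP_t$ as in the elliptic case, the only new contributions are $-g'u_t$ and the term $\tfrac{2}{\phi}\sum_{i,p,q}C^{pq}w_q\widetilde{F}^{ii}w_{iip}$, where the third-derivative sum is handled by differentiating the equation once to get $\widetilde{F}^{ij}u_{ijp}-u_{tp}=\tilde{f}_{x_p}+\tilde{f}_u u_p$; the extra $u_{tp}$ is absorbed because $|u_t|\leq N_1$ and $Du_t$ couples only through gradients of $\pp$ and $d$ already appearing in \eqref{eq 3.17}, \eqref{eq 3.22}, \eqref{eq 3.23}. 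Using Lemma \ref{lem 2.3} in the form \eqref{eq parabolic sum geq tilde c_2} in place of \eqref{eq 3.6}, the remaining terms combine into
$$
0\geq\Bigl[\Bigl(\tfrac{c_{n,k}}{8(1+2N_0)^2}-C\mu\Bigr)|Dw|^2-C|Dw|-C\Bigr]\sum_i\widetilde{F}^{ii}-\tfrac{|u_t|}{2(1+N_0)},
$$
and choosing $\mu$ sufficiently small yields an a priori bound on $|Dw|$ and hence on $|Du|$ at $(x_0,t_0)$. Since $T_0<T$ is arbitrary, \eqref{eq parabolic near boundary gradient} follows.

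The main obstacle, as in the elliptic case, is not any individual term but the fact that the matrix $\{C^{ij}\}$ is only positive semidefinite, so one cannot directly extract $|D^2u|^2$ from $\phi\operatorname{I}_4$; the device of separating the degenerate index $i_0$ and absorbing $|S_k^{i_0i_0}u_{i_0i_0}|$ via Lemmas \ref{lem 2.3} and \ref{lem 2.6}, exactly as in \eqref{eq 3.28}--\eqref{eq 3.29}, must be ported to the parabolic setting. The parabolic novelty is limited to controlling the new $u_t$ and $u_{tp}$ terms by $N_1$ and $|\tilde f_u|$, which are bounded by hypothesis, so no essentially new estimate is required beyond the choice of $\mu$ absorbing the additional linear-in-$|Dw|$ contributions.
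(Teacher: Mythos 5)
Your proposal is correct and follows essentially the same route as the paper: the same auxiliary function $\varPhi=\log\phi+g(u)+\alpha_0 d$ on $\overline{\ooo_\mu}\times[0,T_0]$, the same case analysis ($t=0$, inner boundary via Theorem \ref{thm parabolic interior gradient}, outer boundary via the Neumann condition fixing $\alpha_0$, interior space--time maximum), the same use of Lemma \ref{lem 2.4} after forcing $u_{11}<0$, the lower bound \eqref{eq parabolic sum geq tilde c_2} in place of \eqref{eq 3.6}, and the same treatment of the parabolic terms by differentiating the equation so that $\widetilde{F}^{ii}u_{iip}-u_{tp}$ is replaced by $\tilde f_{x_p}+\tilde f_u u_p$. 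The only point the paper makes explicit that you leave implicit is the parabolic analogue of \eqref{eq 3.8}, namely $\sup_{\ooo\times[0,T)}|Du|\le C\bigl(1+\sup_{\pa\ooo\times[0,T)}|Du|\bigr)$, on which your comparability reduction at the interior maximum rests; the paper proves it in a short Step 1 by applying the parabolic maximum principle to $\chi=\log|Du|^2+g(u)$, and the same routine argument should be added to your write-up.
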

\begin{proof}
    The proof is similar to Theorem \ref{thm elliptic near boundary gradient}, with some modifications. 

    We extend the unit vector field $\bb$ smoothly to $\ooo_\mu\times [0,T)$ such that $\z<\bb,-Dd\x>=\cos \ttt\geq \de_0$. Let 
    $$w=u+\frac{\pp d}{\cos \ttt}\quad \te{and} \quad \p=\sum_{i,j}C^{ij}w_i w_j \triangleq \sum_{i,j}(\de_{ij}-d_i d_j)w_i w_j.
    $$ 
    For any $T_0\in [0,T)$, we define
    \begin{equation*}
        \varPhi =\log \p+g(u)+\al_0 d, \quad \te{in } \ooo_\mu\times[0,T_0],
    \end{equation*}
    where $g(u)=-\frac{1}{2} \log(1+M_0-u)$, and $\mu,\al_0>0$ are constants to be determined later. 

    \textbf{Step 1.}
    First, we establish the following claim:
    \begin{equation}\label{eq parabolic global gradient to boundary}
        \sup_{\ooo\times [0,T)} |Du|\leq C_1 (1+\sup_{\pa \ooo\times[0,T)}|Du|),
    \end{equation} 
    where $C_1$ depends on $n$, $k$, $|Du_0|_{C^0}$, $N_0$, $N_1$, $\inf f$, and $|f|_{C^1}$. To prove this ,claim, we choose the auxiliary function $\chi=\log |Du|^2+g(u)$ in $\overline{\ooo} \times [0,T_0]$, where $T_0\in(0,T)$ is arbitrary. Suppose $\chi$ attains its maximum over $\overline{\ooo}\times [0,T_0]$ at $(z_0,s_0)$. If $z_0\in \pa \ooo$ or $s_0=0$, then \eqref{eq parabolic global gradient to boundary} follows directly. Without loss of generality, we may assume $(z_0,s_0)\in \ooo \times (0,T_0]$, $D^2 u$ is diagonal and $u_1^2\geq \frac{1}{n}|Du|^2$ at $(z_0,s_0)$. At $(z_0,s_0)$, we have 
    \begin{align*}
        0=\chi_i =& 2\frac{u_i u_{ii}}{|Du|^2}+g' u_i, \quad  0\leq \chi_t = 2\frac{u_l u_{lt}}{|Du|^2}+g' u_t.
    \end{align*}
    Consequently, $u_{11}<0$, which implies $\widetilde{F}^{11}\geq c_{n,k}\sum_i \widetilde{F}^{ii}$.
    Applying Lemma \ref{lem 2.6}, we obtain
    \begin{align*}
        0\geq \widetilde{F}^{ii} \chi_{ii} -\chi_t= & 2\frac{\widetilde{F}^{ii}u_{ii}^2}{|Du|^2}+\frac{2u_l}{|Du|^2}(\widetilde{F}^{ii}u_{iil}-u_{tl})+[g''-(g')^2]\widetilde{F}^{ii}u_{i}^2+g' (\widetilde{F}^{ii}u_{ii}-u_t)\\
        \geq & -\frac{2|f_{x_l}|}{|Du|f}-2\frac{|f_u|}{f}-\frac{|u_t|}{2}+\frac{c_{n,k}u_1^2}{4(1+2N_0)^2}\sum_i \widetilde{F}^{ii}-c_{n,k}\sum_i \widetilde{F}^{ii}.
    \end{align*}
    Therefore, \eqref{eq parabolic global gradient to boundary} holds by \eqref{eq parabolic sum geq tilde c_2} and the arbitrariness of $T_0$. Combining \eqref{eq 3.9}, \eqref{eq 3.10} and \eqref{eq parabolic global gradient to boundary} yields $\sup_{\ooo} |Du|^2 \leq C_2(1+\sup_{\pa \ooo}\p)$.

    \textbf{Step 2.}
    From the above argument, it suffices to prove a bound for $\p$ or $|Dw|$.

    Assume that $\PP$ attains ite maximum over $\overline{\ooo_\mu}\times[0,T_0]$ at $(x_0,t_0)$. If $t_0=0$, we may choose $\mu>0$ small such that \eqref{eq parabolic near boundary gradient} holds. Thus, we assume $t_0>0$. By Theorem \ref{thm parabolic interior gradient}, it suffices to consider the following two cases.

    \textbf{Case 1:} $x_0 \in \pa \ooo$.
    The proof of this case relies solely on the Neumann boundary condition, and $\al_0=\al_0(\pa \ooo,\de_0,|\bb|_{C^2},|\pp|_{C^0})>0$ has been chosen appropriately. Thus, we omit the proof here, and the detailed proof can be found in \cite{chen-ma-zhang-2021}.

    \textbf{Case 2:} $x_0 \in \ooo_\mu$. Let $T_j=\sum_i C^{ij}w_i$ and define $\mathcal{T}=(T_1,\cdots,T_n)$.
    By \eqref{eq parabolic global gradient to boundary}, we may assume that the maximum of $|Du|$ on $\pa \ooo\times[0,T_0]$ is attained at $(x_1,t_1)$. 
    Furthermore, we assume that both $|Du(x_0,t_0)|$ and $|Du(x_1,t_1)|$ are sufficiently large and $D^2 u(x_0,t_0)$ is diagonal, ensuring the following conditions hold (similar to Theorem \ref{thm elliptic near boundary gradient}):
    \begin{align*}
        & |Du(x_1,t_1)| \te{ is comparable to } |Dw(x_1,t_1)|, \quad \te{and} \quad T_1(x_0,t_0)>0, \\
        & w_1, \ u_1, \ T_1, \ |Dw|, \ |Du|, \ |\mathcal{T}|  \te{ and } \p^\frac{1}{2} \te{ are mutually comparable at } (x_0,t_0).
    \end{align*}
    All subsequent calculations are evaluated at $ (x_0,t_0)$. Through direct calculation, we obtain
    \begin{equation}\label{eq 6.5}
        0=\PP_i =\frac{\phi_i}{\phi} + g' u_i+\al_0 d_i\,\quad \te{and} \quad 0\geq \PP_t =\frac{\p_t}{\p}+g'(u)u_t.
    \end{equation}
    Using these relations, the Cauchy inequality and $\sum_i \widetilde{F}^{ii}u_{ii}\geq -c_{n,k}\sum_i \widetilde{F}^{ii}$, we obtain
    \begin{align}
        0\geq&  \widetilde{F}^{ii} \PP_{ii}-\PP_t \nonumber \\
        \geq & \z[\widetilde{F}^{ii} \frac{\p_{ii}}{\p}  -\frac{\p_t}{\p}\x]+\z[g''-\frac{3}{2}(g')^2\x] \widetilde{F}^{ii} u_i^2+g'(\widetilde{F}^{ii} u_{ii}-u_t)-2 \al_0^2 \widetilde{F}^{ii} d_i^2+\al_0 \widetilde{F}^{ii} d_{ii} \nonumber \\
        \geq & \z[\widetilde{F}^{ii} \frac{\p_{ii}}{\p}-\frac{\p_t}{\p}\x]+\z[\frac{1}{8(1+2N_0)^2}\sum_i \widetilde{F}^{ii}u_i^2-C_3(1+\al_0+\al_0^2)\sum_{i} \widetilde{F}^{ii}-\frac{|u_t|}{2}\x] \nonumber\\
        =& \operatorname{I}+\operatorname{II}. \label{eq parabolic thm 6.6 0 geq I+II+III+IV}
    \end{align}

    Following similar arguments as in Theorem \ref{thm elliptic near boundary gradient}, we can show that $u_{11}<0$ if $\mu>0$ is sufficiently small.
    Thus, by Lemma \ref{lem 2.4} and \eqref{eq parabolic sum geq tilde c_2}, we obtain
    \begin{equation}\label{eq parabolic F^11 geq sum F^ii}
        \widetilde{F}^{11} \geq c_{n,k} \sum_{i} \widetilde{F}^{ii}\geq c_{n,k}\tilde{c}_2(n,k, \tilde{c}_0, \tilde{c}_1)>0.
    \end{equation}

    Next, we proceed to estimate the following term:
    \begin{align*}
        \operatorname{I}=&  \frac{1}{\p}\sum\limits_{i,p,q } \widetilde{F}^{ii} {C^{pq}}_{,ii} w_p w_q+\frac{2}{\p}\sum_{i,p,q } C^{pq}w_q (\widetilde{F}^{ii} w_{iip}-w_{tp}) \\
        & +\frac{4}{\p} \sum_{i,p,q} {\widetilde{F}^{ii} {C^{pq}}_{,i} w_{ip} w_q}+\frac{2}{\p}\sum_{i,p,q} \widetilde{F}^{ii} C^{pq} w_{ip} w_{iq}\\
        =& \operatorname{I}_1 + \operatorname{I}
        _2+\operatorname{I}_3+\operatorname{I}_4.
    \end{align*}
    It is suffices to estimate $\operatorname{I}_2$, as the other terms can be derived from Theorem \ref{thm elliptic near boundary gradient}. Combining \eqref{eq 3.22}, \eqref{eq 3.23}, \eqref{eq parabolic F^11 geq sum F^ii}, and
    \begin{align*}
        \z(\frac{\pp d}{\cos \ttt}\x)_{tp}=\frac{d}{\cos \ttt}[\pp_u u_{tp}+\pp_{uu} u_t u_p+\pp_{x_p u}u_t]+\z(\frac{d}{\cos \ttt}\x)_p \pp_u u_t,
    \end{align*}
    we obtain
    \begin{align*}
        \p \operatorname{I}_2
        \geq & \z(1+\frac{\pp_u d}{\cos \ttt}\x) T_p(\widetilde{F}^{ii}u_{iip}-u_{tp}) -C_4(|Dw|^3+d|Dw|^4)\z(\sum_i \widetilde{F}^{ii}+1\x) \nonumber\\
        &- C_4(|Dw|+d|Dw|) \sum_i \widetilde{F}^{ii}|u_{ii}|\nonumber\\
        \geq & -C_5(|Dw|^3+d|Dw|^4)\sum_i \widetilde{F}^{ii}-C_5(|Dw|+d|Dw|^2)\sum_i \widetilde{F}^{ii}|u_{ii}|.
    \end{align*} 
    Therefore, we establish the following inequality, analogous to \eqref{eq 3.29}:
    \begin{align}\label{eq parabolic boundary gradient-I}
        \p \operatorname{I}\geq  -C_6(|Dw|^3+d|Dw|^4)\sum_i \widetilde{F}^{ii}.
    \end{align} 
    Substituting \eqref{eq parabolic F^11 geq sum F^ii} and \eqref{eq parabolic boundary gradient-I} into \eqref{eq parabolic thm 6.6 0 geq I+II+III+IV}, we obtain
    \begin{align*}
        0\geq& \z[\z(\frac{c_{n,k}}{8(1+2N_0)^2}- C_6 d\x)|Dw|^2-C_6|Dw|-C_7 \x] \sum_{i} \widetilde{F}^{ii}-\frac{|u_t|}{2}\\
        \geq & \z[\frac{c_{n,k}}{16(1+2N_0)^2}|Dw|^2-C_6|Dw|-C_8 \x] \tilde{c}_2.
    \end{align*} 
    The proof is complete if we choose $\mu>0$ sufficiently small.
\end{proof}

\subsection{Second-order derivatives estimate}

\begin{theorem}\label{thm parabolic C^2}
    Let $\ooo\subset \rr^n$ be a $C^4$ uniformly convex domain, and let $f,\pp$ satisfy $f>0$ and $\pp_u\leq 0$. If $u\in C^4(\overline{\ooo}\times [0,T))$ is a $(k-1)$-admissible solution to \eqref{eq parabolic sum hessian-1}. Then 
    \begin{equation*}
        \sup_{\ooo\times [0,T)}|D^2 u|\leq N_4,
    \end{equation*}
    where $N_4>0$ depends on $n$, $k$, $\ooo$, $\tilde{c}_1$, $\tilde{c}_2$, $N_1$, $|D^2 u_0|_{C^0}$, $|Du|_{C^0}$, $\inf f$, $|f|_{C^2}$, and $|\pp|_{C^3}$. 
    
    In particular, if $\pp$ satisfies \eqref{condition c_pp}, the uniformly convexity on $\ooo$ can be weakened to almost convex and uniformly $(k-1)$-convex with $a_\ka>\frac{1}{2}c_{\pp}$. In this case, the constant $N_4$ additionally depends on $a_\ka$, $c_{\pp}$ and $c_\ka$.
\end{theorem}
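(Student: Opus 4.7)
The plan is to run the parabolic version of Theorem \ref{thm elliptic C^2 estimates}, replacing the linearised operator $S_k^{ij}\pa_{ij}$ by its parabolic analogue
\[
L:=\widetilde F^{ij}(D^2 u)\,\pa_{ij}-\pa_t.
\]
The bounds $|u|_{C^0}\le N_0$, $|u_t|_{C^0}\le N_1$ and $|Du|_{C^0}\le \max\{N_2,N_3\}$ from Lemmas \ref{lem parabolic u_t estimate}, \ref{lem parabolic C^0} and Theorems \ref{thm parabolic interior gradient}, \ref{thm parabolic near boundary gradient} are in hand, so it remains to control $|D^2 u|$. As in the elliptic case I will proceed in three steps: (i) reduce the global bound to $\max_{\pa\ooo\times[0,T)}|u_{\nu\nu}|$; (ii) bound $u_{\nu\nu}$ from above on $\pa\ooo\times[0,T)$; and (iii) bound it from below. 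At an interior space-time extremum $(x_0,t_0)$ with $t_0>0$ the parabolic maximum principle supplies $DP=0$ together with $\widetilde F^{ij}P_{ij}-P_t\ge 0$ (or $\le 0$); at $t_0=0$, the desired bound is supplied by $|D^2 u_0|_{C^0}$.

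For step (i) I would mimic the Ma--Qiu auxiliary function $\log\max_{|\xi|=1}u_{\xi\xi}$ modified by a linear test field in $\nu$ and by $g(|Du|^2)$, and apply $L$. The concavity of $S_k^{1/k}$ and the inequality $\lam_i S_k^{ii}\ge\ttt_0 S_k$ for $i<k$ from Lemma \ref{lem 2.2} pass directly to $\widetilde F=\log S_k$, so the elliptic computation of \cite{Ma-Qiu-2019-CPM} adapts with only an $O(1+N_1)$ correction from the $\pa_t$ term. As in Lemma \ref{lem elliptic global C^2 reduce to boundary}, the almost convexity hypothesis $2a_\ka>c_\pp$ guarantees the boundary identity $u_{\xi\xi\nu}\le (c_\pp-2a_\ka)u_{\xi\xi}+C(1+|u_{\nu\nu}|)$, which powers the reduction.

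For steps (ii) and (iii) I would reuse verbatim the auxiliary functions
\[
P(x,t)=(1+\bb d)\bigl[Du\cdot(-Dd)-\pp(x,u)\bigr]\mp\bigl(A+\tfrac{M}{2}\bigr)h(x)
\]
of Lemmas \ref{lem elliptic u_nu nu leq C} and \ref{lem elliptic u_nu nu geq -C}, with $h$ from Lemma \ref{lem h=-d+Kd^2}; the minus sign yields the upper bound on $u_{\nu\nu}$ and the plus sign the lower bound. Boundary and corner conditions on $\pa(\ooo_\mu)\times[0,T_0]$ are checked exactly as before, and on the initial face $t=0$ the extremum is controlled by $|D^2 u_0|_{C^0}$. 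At an interior space-time extremum with $t_0>0$ the only new contribution is $-P_t=(1+\bb d)[Du_t\cdot Dd+\pp_u u_t]$; differentiating $u_t=\widetilde F(D^2 u)-\tilde f(x,u)$ in $x_j$ and contracting with $d_j$ gives
\[
\sum_j u_{tj}d_j=\sum_j\widetilde F^{ii}u_{iij}d_j-\sum_j\bigl(\tilde f_{x_j}+\tilde f_u u_j\bigr)d_j,
\]
so the delicate piece $Du_t\cdot Dd$ cancels exactly against the third-derivative term $(1+\bb d)\sum_j\widetilde F^{ii}u_{iij}d_j$ already present in $\widetilde F^{ii}P_{ii}$, leaving remainders controlled by $N_1$, $|\tilde f|_{C^1}$ and $|\pp|_{C^2}$. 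Thereafter the three-case analysis ($u_{nn}\ge 0$; $-u_{nn}$ small relative to $u_{11}$; and $-u_{nn}$ comparable to $u_{11}$, the last invoking Lemma \ref{lem 2.5}) proceeds identically to Lemmas \ref{lem elliptic u_nu nu leq C} and \ref{lem elliptic u_nu nu geq -C}, and the same large choices of $\bb$ and $A$ (now additionally depending on $N_1$) force contradictions unless $|u_{\nu\nu}|$ is bounded.

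The main obstacle is exactly this cancellation: the third-derivative terms in $\widetilde F^{ii}P_{ii}$ must be combined with $P_t$ via the once-differentiated equation \emph{before} the sign dichotomy on $u_{ii}$ in Case 2 is invoked; otherwise one cannot close the parabolic inequality. For the closing clause of the theorem, if $\pp$ only satisfies $\pp_u\le 0$ I would replace $h$ by a function built from a uniformly convex defining function of $\ooo$, whereas under $\pp_u\le c_\pp<0$ together with $2a_\ka>c_\pp$ the domain need only be almost convex and uniformly $(k-1)$-convex, since Lemma \ref{lem h=-d+Kd^2} still delivers $\widetilde F^{ij}h_{ij}\ge c_1\sum_i\widetilde F^{ii}$ and the almost convexity enters only through the reduction step, exactly as in the elliptic argument.
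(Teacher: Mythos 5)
Your proposal is correct and follows essentially the same route as the paper: a reduction lemma using a direction-maximized second-derivative test function (Lemma \ref{lem parabolic global C^2 reduce to boundary}), followed by upper and lower bounds for $u_{\nu\nu}$ via the auxiliary functions $(1+\bb d)\bigl[Du\cdot(-Dd)-\pp(x,u)\bigr]\mp\bigl(A+\tfrac{M}{2}\bigr)h$ with $h$ from Lemma \ref{lem h=-d+Kd^2}, where the time derivative is absorbed exactly as you describe, by pairing $-d_j u_{tj}$ with $d_j\widetilde F^{ii}u_{iij}$ through the once-differentiated equation (Lemmas \ref{lem parabolic lower bound of boundary C^2} and \ref{lem parabolic upper bound of boundary C^2}). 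The only cosmetic differences are that the paper's reduction uses $u_{\xi\xi}-v'+K_1|x|^2+K_2|Du|^2$ over $\mathbb{S}^{n-1}$ rather than a logarithmic Ma--Qiu-type quantity, and that in the $\pp_u\le 0$ case the uniform convexity is invoked in the reduction lemma's tangential boundary case ($\pp_u-2D_1\nu^1\le-2\ka_{\min}<0$) rather than through a different choice of $h$.
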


\begin{lemma}\label{lem parabolic global C^2 reduce to boundary}
    Let $\ooo\subset \rr^n$ be a $C^4$ uniformly convex domain, and let $f,\pp$ satisfy $f>0$ and $\pp_u\leq 0$. If $u\in C^4(\overline{\ooo}\times [0,T))$ is a $(k-1)$-admissible solution to \eqref{eq parabolic sum hessian-1}. Then  
    \begin{equation*}
        \sup_{\ooo\times [0,T)}|D^2u |\leq N_5\z(1+\sup_{\pa \ooo\times [0,T)}|u_{\nu\nu}|\x),
    \end{equation*}  
    where $N_5>0$ depends on $n$, $k$, $\ooo$, $\tilde{c}_1$, $\tilde{c}_2$, $|D^2 u_0|_{C^0}$, $|Du|_{C^0}$, $\inf f$, $|f|_{C^2}$, and $|\pp|_{C^3}$. 
    
    In particular, if $\pp$ satisfies \eqref{condition c_pp}, the uniformly convexity can be weakened to almost convex with $a_\ka>\frac{1}{2}c_{\pp}$. In this case, the constant $N_5$ additionally depends on $a_\ka$ and $c_{\pp}$.
\end{lemma}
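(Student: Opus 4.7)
The plan is to adapt the elliptic reduction of Lemma \ref{lem elliptic global C^2 reduce to boundary} (after \cite[Lemma 13]{Ma-Qiu-2019-CPM}) to the parabolic setting, in the spirit of \cite{chen-ma-zhang-2021}. For any $T_0\in(0,T)$, consider on the compact set $\overline{\ooo}\times S^{n-1}\times[0,T_0]$ the auxiliary function
\begin{equation*}
    v(x,\xi,t)=u_{\xi\xi}(x,t)\exp\bigl(\tfrac{\bb}{2}|Du(x,t)|^2-\al_0 u(x,t)\bigr),
\end{equation*}
with large positive constants $\bb,\al_0$ to be chosen. If $v$ is maximized at $t_0=0$, the bound follows from $|D^2 u_0|_{C^0}$. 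Otherwise let $(x_0,\xi_0,t_0)$ with $t_0>0$ be a maximum point, and rotate coordinates so that $\xi_0=e_1$ and $D^2 u(x_0,t_0)=\operatorname{diag}(u_{11},\ldots,u_{nn})$ with $u_{11}$ the largest eigenvalue; we then split into the cases $x_0\in\ooo$ and $x_0\in\pa\ooo$.

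Interior case ($x_0\in\ooo$): Applying the linearized parabolic operator $L:=\widetilde{F}^{ij}\pa_{ij}-\pa_t$ to $\log v$, the maximum condition gives $L(\log v)(x_0,t_0)\leq 0$. Differentiating $u_t=\widetilde{F}(D^2 u)-\tilde{f}(x,u)$ twice in direction $e_1$ yields
\begin{equation*}
    \widetilde{F}^{ij}u_{ij11}-u_{t11}+\widetilde{F}^{ij,pq}u_{ij1}u_{pq1}=(\tilde{f})_{11},
\end{equation*}
and the concavity of $\log S_k$ on $\tilde{\gaa}_k$, a consequence of Lemma \ref{lem 2.2}(b), gives $\widetilde{F}^{ij,pq}u_{ij1}u_{pq1}\leq 0$. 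Combining these with the $C^0$, gradient and $u_t$ bounds from Lemma \ref{lem parabolic u_t estimate}, Lemma \ref{lem parabolic C^0} and Theorems \ref{thm parabolic interior gradient}--\ref{thm parabolic near boundary gradient}, and using Lemmas \ref{lem 2.3} and \ref{lem 2.6}, the positive term $\bb\widetilde{F}^{ii}u_i^2$ from the gradient weight and $-\al_0 L(u)$ from the $u$-weight dominate the remaining negative contributions for $\bb,\al_0$ sufficiently large. This yields the absolute bound $u_{11}(x_0,t_0)\leq C$.

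Boundary case ($x_0\in\pa\ooo$): Decompose $\xi_0=\xi^T+\xi^\nu\nu$ into tangential and normal components. If $|\xi^\nu|$ is non-negligible, $u_{\xi\xi}(x_0,t_0)\leq C(1+|u_{\nu\nu}|)$ follows at once, since mixed tangential-normal derivatives are controlled by differentiating the Neumann condition $u_\nu=\pp(x,u)$ once tangentially. If $\xi_0$ is (essentially) tangential, differentiating the boundary relation twice in $\xi^T$ (exactly as in equation (38) of \cite{Ma-Qiu-2019-CPM}) gives at $x_0$
\begin{equation*}
    u_{\xi\xi\nu}(x_0,t_0)\leq (c_\pp-2a_\ka)u_{\xi\xi}(x_0,t_0)+C\bigl(1+|u_{\nu\nu}|\bigr),
\end{equation*}
and the hypothesis $2a_\ka>c_\pp$ (or $\ga_\ka>0$ under uniform convexity when $c_\pp=0$) makes the coefficient strictly negative. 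The first-order boundary maximum condition $(\log v)_\nu(x_0,t_0)\geq 0$ combined with the above forces $u_{\xi\xi}(x_0,t_0)\leq C\bigl(1+\max_{\pa\ooo\times[0,T_0]}|u_{\nu\nu}|\bigr)$. All constants are independent of $T_0$, so letting $T_0\nearrow T$ concludes the proof.

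The main obstacle will be carefully bookkeeping the extra $u_t$-terms produced when $L$ is applied to the gradient weight: in particular $L(\tfrac{\bb}{2}|Du|^2)$ generates contributions of the form $\bb\widetilde{F}^{ii}u_i(-\tilde{f}_{x_i}-\tilde{f}_u u_i)$ together with a $u_{ti}u_i$ piece that must be absorbed using the established gradient and $u_t$ estimates. The concavity of $\log S_k$ (not merely $S_k^{1/k}$) is essential for killing the quadratic third-derivative term in the interior step, and is the reason we differentiate the equation in the form $u_t=\log S_k(D^2u)-\log f$ directly rather than $S_k(D^2u)=e^{u_t}f$. Apart from these parabolic adjustments, the argument is parallel to the elliptic case.
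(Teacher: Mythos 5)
Your overall strategy (reduce $|D^2u|$ to the boundary double normal derivative by applying the parabolic maximum principle to $u_{\xi\xi}$ with lower order weights, use concavity of $\log S_k$ in the interior, and differentiate the Neumann condition at the boundary) is the same as the paper's, but your choice of auxiliary function $v=u_{\xi\xi}\exp(\tfrac{\bb}{2}|Du|^2-\al_0 u)$, without the Lions--Trudinger--Urbas correction term, breaks the boundary step in two ways. First, the case split "normal component non-negligible / essentially tangential" leaves a genuine gap at nearly tangential directions: maximality in $\xi$ together with the once-differentiated boundary condition only gives $u_{\xi_0\xi_0}\leq C(\xi_0\cdot\nu)^{-2}+|u_{\nu\nu}|$, which degenerates as $\xi_0\cdot\nu\to 0$, while the twice-tangentially-differentiated boundary relation requires $\xi_0$ to be exactly tangential. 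The paper's function contains $v'(x,t,\xi)=2(\xi\cdot\nu)\xi'^i(D_i\pp-u_lD_i\nu^l)$, which on $\pa\ooo$ equals exactly the mixed term $2(\xi\cdot\nu)|\xi'|u_{\tilde\xi\nu}$, so that $u_{\xi\xi}-v'=|\xi'|^2u_{\tilde\xi\tilde\xi}+(\xi\cdot\nu)^2u_{\nu\nu}$ with no error and the reduction to exactly tangential or normal directions is clean; without it you need an additional argument you have not supplied. Second, and more seriously, even for exactly tangential $\xi_0$ the multiplicative weight ruins the Hopf step: at a boundary maximum you get $0\leq(\log v)_\nu=\frac{u_{\xi\xi\nu}}{u_{\xi\xi}}+\bb u_lu_{l\nu}-\al_0\pp$, and $\bb u_lu_{l\nu}$ contains $\bb\,\pp\,u_{\nu\nu}$, a term of size $C(1+|u_{\nu\nu}|)$ that is \emph{not} divided by $u_{\xi\xi}$. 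Inserting $u_{\xi\xi\nu}\leq(c_\pp-2a_\ka)u_{\xi\xi}+C(1+|u_{\nu\nu}|)$ only yields $0\leq(c_\pp-2a_\ka)+\frac{C(1+|u_{\nu\nu}|)}{u_{\xi\xi}}+C(1+|u_{\nu\nu}|)$, which is vacuous once $|u_{\nu\nu}|$ is large and gives no bound of the form $u_{\xi\xi}\leq C(1+|u_{\nu\nu}|)$. The additive structure of the paper's function $v=u_{\xi\xi}-v'+K_1|x|^2+K_2|Du|^2$ is precisely what makes $v_\nu\geq 0$ translate into the desired linear bound, since every weight contribution to $v_\nu$ enters additively as $O(1+|u_{\nu\nu}|)$.

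Two further points on the interior step. The positive term produced by the gradient weight is $\bb\,\widetilde{F}^{ii}u_{ii}^2$, not $\bb\,\widetilde{F}^{ii}u_i^2$; and since the first-order critical point identity feeds $-\widetilde{F}^{ii}u_{\xi\xi i}^2/u_{\xi\xi}^2=-\widetilde{F}^{ii}(\bb u_iu_{ii}-\al_0u_i)^2$ back into the inequality, $\bb$ must be chosen \emph{small} relative to $|Du|_{C^0}^{-2}$ to absorb the $\bb^2$ piece, while the $-\al_0u$ factor generates a leftover $-C\al_0^2\sum_i\widetilde{F}^{ii}$ that nothing in your function absorbs (you dropped the $K_1|x|^2$-type term); so "for $\bb,\al_0$ sufficiently large" is the wrong direction and, as written, the interior inequality does not close. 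The paper sidesteps all of this by not taking logarithms: it uses concavity of $\widetilde{F}$ directly on $u_{\xi\xi}$, the term $K_2\sum_i\widetilde{F}^{ii}u_{ii}^2\geq\theta_0K_2u_{11}$ from Lemma \ref{lem 2.2}(c), and $K_1\sum_i\widetilde{F}^{ii}$ to absorb the terms coming from $v'$.
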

\begin{proof}
    For any $T_0\in [0,T)$, we introduce the following auxiliary function
    \begin{align*}
        v(x, t,\xi)= u_{\xi \xi}-v'(x,t,\xi)+K_1 |x|^2+K_2|D u|^2,\quad \te{in } \ooo\times [0,T_0] \times\mathbb{S}^{n-1},
    \end{align*}
    where $K_1,K_2>0$ are two constants to be determined later, and
    \begin{align*}
        & v' (x,t, \xi) = 2 (\xi \cdot \nu) \xi'^i(D_i \pp - u_l D_i \nu^l)= a^l u_l +b, \quad  \xi' = \xi - (\xi \cdot \nu) \nu, \\
        & a^l = 2(\xi \cdot \nu) ( \xi'^l \pp_u -\xi'^i D_i \nu^l), \quad  b = 2 (\xi \cdot \nu) \xi'^i  \pp_{x_i}.
    \end{align*} 

    For any fixed $\xi \in \mathbb{S}^{n-1}$, we may assume that $v(\cdot,\xi)$ attains its maximum over $\overline{\ooo}\times [0,T_0]$ at the point $(x_0,t_0)$ satisfying
    $$
    v(x_0,t_0,\xi)>0,\quad u_{\xi\xi}(x_0,t_0)>0, \quad \te{and}\quad t_0>0.
    $$ 
    Otherwise, $\sup_{\overline{\ooo}\times[0,T]}u_{\xi\xi}
    \leq C_1(|D^2 u_0|_{C^0}+K_1+1)$. 
    
    In what follows, we assume $(x_0,t_0)\in \ooo\times (0,T_0]$. We further assume that $D^2 u$ is diagonal at $(x_0,t_0)$ and $u_{11}\geq\cdots \geq u_{nn}$. All subsequent calculations are evaluated at $(x_0,t_0)$. At $(x_0,t_0)$, we have  
    \begin{align*}
        0=v_i=u_{i\xi\xi}-v'_i+2K_1 x_i+2K_2 u_l u_{li}, \quad 0\leq v_t=u_{t\xi\xi}-v'_t +2K_2 u_l u_{lt},   
    \end{align*} 
    and
    \begin{align}
        0\geq \widetilde{F}^{ii} v_{ii}-v_t  = &(\widetilde{F}^{ii} u_{ii\xi \xi }-u_{t\xi\xi})-(\widetilde{F}^{ii} v'_{ii}-v'_t) +2K_2 u_l (\widetilde{F}^{ii}u_{iil}-u_{tl}) \label{eq parabolic 0 geq F^ii v_ii-v_t-I}\\
        &+2K_1 \sum_i \widetilde{F}^{ii}+2K_2 \sum_i \widetilde{F}^{ii}u_{ii}^2.\nonumber
    \end{align}

    Since $S_k^{\frac{1}{k}}$ is concave, so is $\widetilde{F}$. Differentiating \eqref{eq parabolic sum hessian-1} twice yields
    $$
    2K_2 u_l (\widetilde{F}^{ii}u_{lii}-u_{lt})=2K_2 u_l(\tilde{f}_{x_l}+\tilde{f}_u u_l)\geq -C_2K_2,
    $$
    \begin{align*}
        \widetilde{F}^{ii}u_{ii\xi\xi}-u_{t\xi\xi}\geq \widetilde{F}^{ij,pq}u_{ij\xi}u_{pq\xi}+\widetilde{F}^{ii}u_{ii\xi\xi}-u_{t\xi\xi}=\tilde{f}_{\xi\xi}\geq -C_{3}+\tilde{f}_u u_{\xi \xi}.
    \end{align*}
    By direct calculation, we obtain 
    \begin{align*}
        -(\widetilde{F}^{ii}v'_{ii}-v'_t)=& -a^l(\widetilde{F}^{ii}u_{lii}-u_{lt})-\widetilde{F}^{ii}(2D_ia^l u_{li}+D_{ii}a^l u_l +b_{ii})+a^l_t u_l+b_t\\
        \geq & -C_{4}-C_{4} \sum_i \widetilde{F}^{ii}|u_{ii}|-C_{4}\sum_{i}\widetilde{F}^{ii}.
    \end{align*}
    Without loss of generality, we may assume that $u_{11}$ is sufficiently large. Then by using \eqref{eq 2.1} and choosing $K_2$ sufficiently large, we have
    $$
    K_2 \sum_i \widetilde{F}^{ii}u_{ii}^2-C_2 K_2 -C_3-C_4 +\tilde{f}_u u_{\xi \xi}\geq \ttt_0 K_2 u_{11}-C_5(K_2+1+u_{\xi\xi})\geq 0,
    $$
    where the constant $\ttt_0$ is defined in Lemma \ref{lem 2.2}.

    Substituting these estimates into \eqref{eq parabolic 0 geq F^ii v_ii-v_t-I} and choosing $K_1$ sufficiently large, we deduce
    \begin{align*}
        0\geq \sum_{i}\widetilde{F}^{ii}v_{ii}-v_t\geq  K_2 \sum_{i} \widetilde{F}^{ii}\z(|u_{ii}| -\frac{C_{4}}{2K_2}\x)^2+\z(2K_1-\frac{C_{4}^2}{4K_2}-C_4\x)\sum_{i} \widetilde{F}^{ii}>0,
    \end{align*}
    which lead to a contradiction. 
    Therefore, $v|_{\overline{\ooo}\times[0,T_0]\times \mathbb{S}^{n-1}}$ attains its maximum at $(x_0,t_0,\xi_0)\in \pa \ooo\times (0,T_0]\times \mathbb{S}^{n-1}$.

    Finally, we consider two cases: $(1)$ $\xi_0$ is tangent to $\pa \ooo$, and $(2)$ $\xi_0$ is not tangent to $\pa \ooo$. The argument is the same as the proof of \cite[Lemma 6.2]{chen-ma-zhang-2021}. It is worth noting that in case $(1)$, we only require $\pp_u-2D_1\nu^1\leq c_{\pp}-\min_i \ka_i <0$. 
    The details are omitted here.
\end{proof}

\begin{lemma}\label{lem parabolic lower bound of boundary C^2}
    Under the assumptions of Lemma $\ref{thm parabolic C^2}$, we have
    \begin{equation}\label{eq parabolic u_{nu nu} geq -C}
        \inf_{\pa \ooo\times[0,T)}u_{\nu\nu} \geq -N_6,
    \end{equation}
    where $N_6>0$ depends on $n$, $k$, $\ooo$, $\tilde{c}_1$, $\tilde{c}_2$, $N_1$, $N_5$, $|u_0|_{C^2}$, $|Du|_{C^0}$, $\inf f$, $|f|_{C^2}$, and $|\pp|_{C^3}$.

    In particular, if $\pp$ satisfies \eqref{condition c_pp}, then the convexity assumption on $\ooo$ can be weakened.
\end{lemma}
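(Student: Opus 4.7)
The plan is to adapt Lemma~\ref{lem elliptic u_nu nu geq -C} to the parabolic setting, with the time derivative handled through a favorable cancellation so that the argument reduces to the elliptic one. Without loss of generality I assume $M := \max_{\pa\ooo\times[0,T)}|u_{\nu\nu}| = -\min_{\pa\ooo\times[0,T)}u_{\nu\nu} > 0$ is attained at $(\tilde z_0,\tilde t_0)$; otherwise the result follows from the upper-bound analogue. Fix $T_0\in(\tilde t_0,T)$ and consider the same auxiliary function used in the elliptic case,
$$
\widetilde{P}(x,t) = (1+\bb d)\bigl[Du\cdot(-Dd) - \pp(x,u)\bigr] + \Bigl(A+\tfrac{M}{2}\Bigr)h(x),
$$
on $\overline{\ooo_\mu}\times[0,T_0]$, where $h$ is the function from Lemma~\ref{lem h=-d+Kd^2} and $\bb,A>0$ are to be chosen.

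On $\pa\ooo\times[0,T_0]$ the Neumann condition gives $\widetilde{P}\equiv 0$; on $(\pa\ooo_\mu\cap\ooo)\times[0,T_0]$, choosing $A\geq \tfrac{8}{7}C(|Du|_{C^0},|\pp|_{C^0})(\bb+\mu^{-1})$ as in the elliptic proof forces $\widetilde{P}<0$; on $\overline{\ooo_\mu}\times\{0\}$, since $h\leq 0$ (recall $8K\mu\leq 1$) and $u_0$ has bounded $C^1$-norm, $\widetilde{P}(\cdot,0)$ is uniformly bounded above by a constant independent of $M$. Hence, provided the maximum of $\widetilde{P}$ over $\overline{\ooo_\mu}\times[0,T_0]$ is attained only on $\pa\ooo\times[0,T_0]$, Hopf's lemma applied at $(\tilde z_0,\tilde t_0)$ delivers $u_{\nu\nu}(\tilde z_0,\tilde t_0)\geq -A - \tfrac{M}{2} - C$, which yields $M\leq 2(A+C)$.

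To rule out an interior space-time maximum, suppose $\widetilde{P}$ attains its maximum at $(\tilde x_0,\tilde t_0)\in \ooo_\mu\times(0,T_0]$, and diagonalize $D^2u$ there. Then $\widetilde{P}_i=0$, $\widetilde{P}_t\geq 0$, and the parabolic operator satisfies $L\widetilde{P} := \widetilde{F}^{ii}\widetilde{P}_{ii} - \widetilde{P}_t \leq 0$. The crucial observation in computing $L\widetilde{P}$ is that the third-derivative piece $-(1+\bb d)\sum_j d_j\widetilde{F}^{ii}u_{iij}$ generated by $\widetilde{F}^{ii}\widetilde{P}_{ii}$, after using the differentiated equation $\widetilde{F}^{ii}u_{iij}=u_{tj}+\tilde f_{x_j}+\tilde f_u u_j$, produces a $-(1+\bb d)\sum_j d_j u_{tj}$ term that cancels exactly with the $u_{tj}$-contribution from $-\widetilde{P}_t = (1+\bb d)[Du_t\cdot Dd+\pp_u u_t]$. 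After this cancellation the only new terms beyond the elliptic computation are bounded quantities involving $u_t$, $\tilde f_{x_j}$, and $\tilde f_u u_j$, all controlled by $N_1=|u_t|_{C^0}$ from Lemma~\ref{lem parabolic u_t estimate} and by $|f|_{C^1}$.

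The remaining analysis mirrors Lemma~\ref{lem elliptic u_nu nu geq -C}: partition the indices into $B=\{i:\bb d_i^2<1/n\}$ and $G=\{i:\bb d_i^2\geq 1/n\}$ (non-empty for $\bb=1/\mu$), extract from $\widetilde{P}_i=0$ the pointwise bound $u_{ii}\leq -(A+M)/8$ for $i\in G$ (so $u_{11}<0$ after ordering), apply Lemma~\ref{lem 2.4} to get $\widetilde{F}^{11}\geq c_{n,k}\sum_i\widetilde{F}^{ii}$, and invoke the uniform positivity $\sum_i\widetilde{F}^{ii}\geq \tilde c_2$ from \eqref{eq parabolic sum geq tilde c_2}. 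Choosing $\bb$ large enough to dominate the $(1+M)\sum_i\widetilde{F}^{ii}$ contributions, and then $A$ large enough to dominate $C\bb$, produces $L\widetilde{P}>0$, a contradiction. The anticipated obstacle is keeping track of the time-derivative terms without damaging the leading positive bound $\tfrac{\bb c_{n,k}}{4n}(A+M)\sum_i\widetilde{F}^{ii}$ arising from $\widetilde{F}^{11}u_{11}$, but the cancellation described above reduces the bookkeeping to the elliptic case. For the ``in particular'' clause, uniform convexity of $\ooo$ enters only through Lemma~\ref{lem parabolic global C^2 reduce to boundary}, which itself admits the weaker hypothesis $a_{\ka}>\tfrac12 c_{\pp}$ when $\pp_u\leq c_{\pp}<0$; this relaxation then propagates to the present estimate.
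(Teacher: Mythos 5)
Your overall strategy is the paper's: the same auxiliary function $\widetilde{P}=(1+\bb d)[Du\cdot(-Dd)-\pp]+(A+\tfrac{M}{2})h$, the same cancellation of the $u_{tj}$ terms via the differentiated equation $\widetilde{F}^{ii}u_{iij}-u_{tj}=\tilde f_{x_j}+\tilde f_u u_j$ (and similarly $\widetilde{F}^{ii}u_{ii}-u_t$), the same $B/G$ index partition with $u_{ii}\leq -(A+M)/8$ on $G$, Lemma \ref{lem 2.4}, and \eqref{eq parabolic sum geq tilde c_2}. However, there is a genuine gap in your treatment of the initial slice $t=0$. You only assert that $\widetilde{P}(\cdot,0)$ is bounded above by a constant independent of $M$; that is not sufficient. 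The interior contradiction argument rules out a maximum only at points with $t_0>0$ (where $\widetilde{P}_t\geq 0$ is available); at a spatial interior maximum occurring at $t=0$ one has $\widetilde{P}_t\leq 0$, and no contradiction can be derived. Hence, with your estimate, the maximum of $\widetilde{P}$ over $\overline{\ooo_\mu}\times[0,T_0]$ could be a \emph{positive} value attained at $t=0$, in which case $\widetilde{P}\leq 0$ fails and the normal-derivative step at $(\tilde z_0,\tilde t_0)$ (which needs $\widetilde{P}\leq \widetilde{P}(\tilde z_0,\tilde t_0)=0$ along the inward normal at time $\tilde t_0$) collapses. The paper closes this by actually proving $\widetilde{P}(\cdot,0)<0$ in $\ooo_\mu$: since the initial data is compatible with the boundary condition, $Du_0\cdot(-Dd)-\pp(x,u_0)$ vanishes on $\pa\ooo$ and is Lipschitz with constant controlled by $|u_0|_{C^2}$ and $|\pp|_{C^1}$, so it is $O(d)$ and is absorbed by $(A+\tfrac{M}{2})h\leq -(A+\tfrac{M}{2})d(1-Kd)$ after enlarging $A$. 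This is exactly why the constant $N_6$ depends on $|u_0|_{C^2}$, a dependence your proposal never uses — a telltale sign of the missing step.

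Two smaller points. First, the maximum of $|u_{\nu\nu}|$ over the half-open interval $[0,T)$ need not be attained; the paper works on $[0,T_0]$ for arbitrary $T_0\in(0,T)$ and then uses the arbitrariness of $T_0$, and you should do the same rather than fixing $T_0\in(\tilde t_0,T)$. Second, in the ``in particular'' clause, uniform convexity does not enter \emph{only} through Lemma \ref{lem parabolic global C^2 reduce to boundary}: the present proof also uses Lemma \ref{lem h=-d+Kd^2} (the barrier $h$ with $\sum\widetilde{F}^{ij}h_{ij}\geq c_1\sum\widetilde{F}^{ii}$), which requires uniform $(k-1)$-convexity; under \eqref{condition c_pp} the hypothesis is weakened to almost convex plus uniformly $(k-1)$-convex with $a_\ka>\tfrac12 c_\pp$, not dropped altogether.
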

\begin{proof}
    For any fixed $T_0\in(0,T)$, we begin by making the necessary assumptions:
    $$ 
    \max\limits_{\pa \ooo\times[0,T_0]}|u_{\nu \nu}|=-\min\limits_{\pa \ooo\times[0,T_0]}u_{\nu \nu}, \quad  N:=-\min\limits_{\pa \ooo\times[0,T_0]} u_{\nu \nu}=-u_{\nu \nu} (z_0,s_0) >0.
    $$ 
    Otherwise, we can easily get from Lemma \ref{lem parabolic upper bound of boundary C^2}. We consider the auxiliary function 
    $$
    P(x,t)=(1+\bb d) \z[Du \cdot (-Dd)-\pp(x,u)\x]+\z(A+\frac{N}{2}\x)h(x), \quad \te{in } \ooo_\mu \times [0,T_0].
    $$
    where $A$ and $\bb$ are two undermined constants.

    Using Lemma \ref{lem h=-d+Kd^2}, there exists a small constant $\frac{1}{10}>\mu>0$, depending on $n,k,\ooo$, and a constant $K>0$ such that $8K\mu\leq 1$ and the function $h(x)=-d(x)+Kd^2(x)$ satisfies 
    \begin{align*}
        \sum_{i,j} \widetilde{F}^{ij}h_{ij} \geq c_1(n,k,\ooo) \sum_{i} \widetilde{F}^{ii}, \quad \te{in } \ooo_\mu\times [0,T).
    \end{align*}
    where $c_1>0$ is defined in Lemma \ref{lem h=-d+Kd^2} and $c_1$ may depend on $c_\ka$.

    We now prove that $P(x,t)\leq 0$ for $x\in \pa\ooo_{\mu}$ or $t=0$. First, note that $P(\cdot,t)|_{\pa \ooo}=0$. For $x\in \pa \ooo_\mu \cap \ooo$, 
    \begin{align*}
        P(x,t) \leq C_1 +\mu \z[\bb C_1-\z(A+\frac{N}{2}\x)(1-K\mu)\x]<0, \quad\te{provided } A> \frac{8}{7}C_1(\bb+\frac{1}{\mu}).
    \end{align*}
    For $x\in \ooo_\mu$, there exists $y\in \pa \ooo$ such that $|x-y|=d(x)$. It follows that
    \begin{align*}
        |Du(x,0)\cdot&(-Dd(x))-\pp(x,u(x,0))|=|Du_0\cdot(-Dd)(x)-\pp(x,u_0(x))|\\
        & =|Du_0\cdot(-Dd)(x)-Du_0\cdot(-Dd)(y)+\pp(y,u_0(y)) -\pp(x,u_0(x))| \\
        &\leq  C_2(\ooo,|u_0|_{C^2},|\pp|_{C^1})|x-y|=C_2 d(x),
    \end{align*}
    and if $A>\frac{8}{7}C_2(1+\bb \mu)$,
    \begin{align*}
        P(x,0) \leq d\z[(1+\bb d)C_2 -\z(A+\frac{N}{2}\x)(1-Kd)\x]<0\quad \te{in }  \ooo_\mu.
    \end{align*}

    Next, we claim that $P|_{\overline{\ooo_\mu}\times[0,T_0]}$ attains its maximum only on $\pa \ooo\times[0,T_0]$. Then 
    \begin{align*}
        0 \leq P_\nu (z_0,s_0) 
        \leq
        \frac{1}{2}\min_{\pa \ooo\times[0,T_0]} u_{\nu \nu}+C_{3}(\ooo,|Du|_{C^0}, |\pp|_{C^1})+A.
    \end{align*}

    To prove this claim, we assume by contradiction that $P|_{\overline{\ooo_\mu}\times [0,T_0]}$ attains its maximum at $(x_0,t_0) \in \Omega_\mu\times(0,T_0]$. By rotating the coordinates, we may assume that $D^2 u(x_0,t_0)$ is diagonal. All subsequent calculations are evaluated at $(x_0,t_0)$. Hence
    \begin{align*}
        0\leq P_t= \z(1+\bb d\x)\z[- u_{jt}d_j-\pp_u u_t\x].
    \end{align*}
    By applying Lemma \ref{lem 2.6}, \eqref{eq elliptic lower, 0=widetilde{P}_{i}}, \eqref{eq elliptic lower, 0 geq widetilde{P}_{ii}} and \eqref{eq parabolic sum geq tilde c_2}, we have the following inequality, analogous to \eqref{eq elliptic upper bound 0 geq I+II+III+IV}:
    \begin{align*}
        0 \geq \widetilde{F}^{ii} P_{ii} -P_t
        \geq& -2\beta {\widetilde{F}^{ii} u_{ii} d_i ^2 } +(1+ \bb d)\z[-d_j \z(\widetilde{F}^{ii}u_{iij}-u_{tj}\x) - 2\widetilde{F}^{ii} u_{ii} d_{ii}\x]   \\
        &-\pp_u(1+\bb d)(\widetilde{F}^{ii}u_{ii}-u_t)+ \z[\z(A + \frac{M} {2}\x)c_1- C_{4}(1+\bb)\x]\sum_{i} \widetilde{F}^{ii}  \\
        \geq& - 2\beta \widetilde{F}^{ii} u_{ii} d_i^2- 2(1+ \beta d) \widetilde{F}^{ii} u_{ii} d_{ii} -\pp_u (1+\bb d)  \widetilde{F}^{ii} u_{ii}  \\
        &+ \z[\z(A + \frac{N}{2}\x)c_1  - C_{5}(1+\bb)\x]\sum_{i} \widetilde{F}^{ii}.
    \end{align*}

    To handle the term $-\sum_i \widetilde{F}^{ii}u_{ii}d_{i}^2$, we partition the indices $i$ into two sets $B$ and $G$. The proof follows the same approach as in Lemma \ref{lem elliptic u_nu nu geq -C}, with $S_k^{ii}$ substituted by $\widetilde{F}^{ii}$.

    Therefore, we can choose sufficiently large constants $A$ and $\bb$ with $A\gg \bb$, ensuring that the maximum of $P|_{\overline{\ooo_\mu}\times[0,T_0]}$ is attained only on $\pa \ooo\times(0,T_0]$. Since $T_0\in(0,T)$ is arbitrary, the proof is complete.
\end{proof}

\begin{lemma}\label{lem parabolic upper bound of boundary C^2}
    Under the assumptions of Theorem $\ref{thm parabolic C^2}$, we have 
    \begin{equation}\label{eq parabolic u_nu nu leq C}
        \sup_{\pa \ooo\times[0,T)}u_{\nu\nu} \leq N_7,
    \end{equation}
    where $N_7>0$ depends on $n$, $k$, $\ooo$, $\tilde{c}_0$, $\tilde{c}_1$, $N_1$, $N_5$, $|u_0|_{C^2}$, $|Du|_{C^0}$, $\inf f$, $|f|_{C^2}$, and $|\pp|_{C^3}$.

    In particular, if $\pp$ satisfies \eqref{condition c_pp}, then the convexity assumption on $\ooo$ can be weakened.
\end{lemma}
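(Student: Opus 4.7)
The plan is to mimic the elliptic upper bound argument of Lemma \ref{lem elliptic u_nu nu leq C}, supplemented with the parabolic modifications introduced in Lemmas \ref{lem parabolic global C^2 reduce to boundary} and \ref{lem parabolic lower bound of boundary C^2}. We may assume
$$
M:=\max_{\pa\ooo\times[0,T_0]}|u_{\nu\nu}|=\max_{\pa\ooo\times[0,T_0]}u_{\nu\nu}=u_{\nu\nu}(z_0,s_0)>0
$$
for some $(z_0,s_0)\in\pa\ooo\times(0,T_0]$, with $T_0\in(0,T)$ arbitrary; otherwise the bound follows from Lemma \ref{lem parabolic lower bound of boundary C^2}. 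With $h(x)=-d(x)+Kd^2(x)$ as in Lemma \ref{lem h=-d+Kd^2}, I introduce the auxiliary function
\begin{equation*}
    P(x,t)=(1+\bb d)\z[Du\cdot(-Dd)-\pp(x,u)\x]-\z(A+\frac{M}{2}\x)h(x),\quad\te{in }\overline{\ooo_\mu}\times[0,T_0],
\end{equation*}
where $\mu,\bb,A>0$ are constants to be chosen, and aim to show that $P$ attains its minimum on $\pa\ooo\times[0,T_0]$. The conclusion then follows: since $P|_{\pa\ooo\times[0,T_0]}\equiv 0$ and $P$ is minimized at the boundary point $(z_0,s_0)$, the Hopf-type inequality $0\geq P_\nu(z_0,s_0)$ combined with the Neumann condition gives $\tfrac{1}{2}\max_{\pa\ooo\times[0,T_0]}u_{\nu\nu}\leq C+A$.

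To rule out the interior and parabolic-initial minima, I would first verify the sign of $P$ on the complementary boundary pieces. On $\pa\ooo_\mu\cap\ooo$ (where $d=\mu$) the choice $A\geq\tfrac{8}{7}C_1(\bb+\mu^{-1})$ yields $P>0$, and on $\{t=0\}$ one uses the comparison argument from Lemma \ref{lem parabolic lower bound of boundary C^2}: for $x\in\ooo_\mu$ with closest boundary point $y$, the compatibility condition $(u_0)_\nu=\pp(\cdot,u_0)$ on $\pa\ooo$ forces
$$
|Du_0\cdot(-Dd)(x)-\pp(x,u_0(x))|\leq C\,d(x),
$$
from which $P(\cdot,0)>0$ provided $A$ is large. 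Then suppose $P$ attains its minimum at some $(x_0,t_0)\in\ooo_\mu\times(0,T_0]$ with $D^2u(x_0,t_0)$ diagonal. The parabolic first-order condition reads $0\geq P_t=(1+\bb d)[-u_{jt}d_j-\pp_u u_t]$ (bounded since $|u_t|\leq N_1$), while the elliptic second-derivative calculation of Lemma \ref{lem elliptic u_nu nu leq C} adapts verbatim with $S_k^{ij}$ replaced by $\widetilde F^{ij}$, using $\sum_i\widetilde F^{ii}\geq\tilde c_2$ from \eqref{eq parabolic sum geq tilde c_2} and the inequality $\widetilde F^{ii}u_{ii}-u_t\geq-c_{n,k}\sum_i\widetilde F^{ii}-|u_t|$ replacing Lemma \ref{lem 2.6}.

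The main obstacle, exactly as in the elliptic case, is Case 3c, where $u_{11}\gtrsim A+M$ is large positive while $u_{nn}$ is large negative (the cases $u_{nn}\geq 0$ or $|u_{nn}|\ll u_{11}$ are straightforward reductions). Here one splits indices into $B=\{i:\bb d_i^2<1/n\}$ and $G=\{i:\bb d_i^2\geq 1/n\}$, extracts an index $i_0\in G$ where $u_{i_0 i_0}\geq(A+M)/8$, and invokes the upper global second-derivative bound from Lemma \ref{lem parabolic global C^2 reduce to boundary} to control the negative eigenvalues by $N_5(1+M)$. Applying Lemmas \ref{lem 2.4} and \ref{lem 2.5} with $\de=\tfrac{1}{8N_5}$ and $\es$ a fixed small constant gives $\widetilde F^{11}\geq c_2\sum_i\widetilde F^{ii}$ for some $c_2>0$ depending on $n,k,\ooo,N_5,\tilde c_0,\tilde c_1,|\pp|_{C^1}$. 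This produces a dominating negative term $-\tfrac{c_2\bb}{4n}(A+M)\sum_i\widetilde F^{ii}$, which absorbs all the positive contributions once $\bb$ and $A$ are chosen with $\bb\gg N_5+1$ and $A\gg\bb$, yielding the required contradiction $0>0$. When $\pp_u\leq c_\pp<0$ and $\ooo$ is only almost convex and uniformly $(k-1)$-convex with $2a_\ka>c_\pp$, Lemma \ref{lem h=-d+Kd^2} still supplies the key inequality $\widetilde F^{ij}h_{ij}\geq c_1\sum_i\widetilde F^{ii}$ (now with $c_1=c_1(c_\ka)$), so the entire argument goes through in this weakened geometric setting as well.
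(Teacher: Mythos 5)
Your proposal is correct and follows essentially the same route as the paper: the same auxiliary function $(1+\beta d)\bigl[Du\cdot(-Dd)-\pp\bigr]-\bigl(A+\tfrac{M}{2}\bigr)h$, the same sign checks on $\pa\ooo_\mu\cap\ooo$ and at $t=0$ via the compatibility of $u_0$ (as in the lower-bound lemma), and the same interior contradiction obtained by transplanting the elliptic case analysis (including Case 3c via Lemma \ref{lem 2.5}) with $S_k^{ii}$ replaced by $\widetilde F^{ii}$ and $A\gg\beta$. One small caution: the term $-u_{jt}d_j$ in $P_t$ is not itself bounded by $N_1$; as in the paper, it must be absorbed by pairing $-P_t$ with the differentiated equation $\widetilde F^{ii}u_{iij}-u_{tj}=\tilde f_{x_j}+\tilde f_u u_j$, which your subsequent description in fact does.
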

\begin{proof}
    The proof of this lemma can be established by combining the arguments of Lemma \ref{lem elliptic u_nu nu leq C} and Lemma \ref{lem parabolic lower bound of boundary C^2}. Here, we only provide a sketch of the main ideas. 
    
    \textbf{Step 1.} For any fixed $T_0\in (0,T)$, we begin by making the necessary assumptions: 
    $$
    \max_{\pa \ooo\times[0,T)} |u_{\nu\nu}|=\max_{\pa \ooo\times [0,T)} u_{\nu\nu}, \quad N:=\max\limits_{\pa \ooo\times[0,T_0]} u_{\nu \nu}=u_{\nu \nu} (\tilde{z}_0,\tilde{s}_0)>0.    
    $$
    Define $\widetilde{P}(x) = (1+\beta d)\z[Du \cdot(-D d)-\pp(x,u) \x] - \z(A +\frac{N}{2}\x)h(x)$, in $\ooo_\mu\times[0,T_0]$, 
    where $h$ is defined in Lemma \ref{lem h=-d+Kd^2}, $\bb$ and $A$ are positive constants to be determined.

    \textbf{Step 2.} Following a similar argument as in Lemma \ref{lem parabolic lower bound of boundary C^2}, by choosing $A$ sufficiently large such that 
    $\widetilde{P}=0$ on $\pa \ooo$, and $\widetilde{P}(x,t)> 0$ on $(\pa \ooo_\mu \cap \ooo)\times[0,T_0]$ or $\{t=0\}$.

    \textbf{Step 3.} 
    We assume that $\widetilde{P}|_{\overline{\ooo_\mu}\times[0,T_0]}$ attains its minimum at $(\tilde{x}_0,\tilde{t}_0) \in \Omega_\mu\times(0,T_0]$ and $D^2 u(\tilde{x}_0,\tilde{t}_0)$ is diagonal. Similar to the calculation in Lemma \ref{lem parabolic lower bound of boundary C^2}, we have
    \begin{align*}
        0 \leq  \widetilde{F}^{ii} \widetilde{P}_{ii}-P_t  
        \leq&  -2\bb \widetilde{F}^{ii} u_{ii} d_i^2-2(1+ \beta d) \widetilde{F}^{ii} u_{ii} d_{ii}-\pp_u(1+\bb d)  \widetilde{F}^{ii}u_{ii}   \\
        &+ \z[-\z(A+\frac{N}{2}\x)c_1+ C (1+\bb)\x]\sum_{i} \widetilde{F}^{ii}.
    \end{align*}

    The subsequent proof follows the argument as in Lemma \ref{lem elliptic u_nu nu leq C}, with $S_k^{ii}$ substituted by $\widetilde{F}^{ii}$. Therefore, we can choose sufficiently large $A$ and $\bb$ with $A\gg \bb$, ensuring that the minimum of $\widetilde{P}\big|_{\overline{\ooo_\mu}\times[0,T_0]}$ is attained only on $\pa \ooo\times(0,T_0]$. Since $T_0\in(0,T)$ is arbitrary, the proof is complete. 
\end{proof}

\begin{proof}[Proof of Theorem $\ref{thm parabolic C^2}$]
    Combining Lemmas \ref{lem parabolic global C^2 reduce to boundary}, \ref{lem parabolic lower bound of boundary C^2}, and \ref{lem parabolic upper bound of boundary C^2}, we complete the proof of Theorem \ref{thm parabolic C^2}.
\end{proof}

\subsection{Proof of Theorem \ref{thm 1.5-parabolic}}

Based on the preceding analysis, we obtain the $u_t$-estimate and a priori $C^2$ estimates for the parabolic Neumann problem \eqref{eq parabolic sum hessian-1}. These estimates imply that \eqref{eq parabolic sum hessian-1} is uniformly parabolic. The concavity of $\widetilde{F}=\log S_k$ allows us to apply the arguments of \cite{Lieberman-Book-1996} and \cite{Lieberman-Trudinger-1986-TAMS}, yielding global $C^{2,\ga}$-estimates for $u$. Combining the method of continuity \cite[Corollary 14.9]{Lieberman-Book-1996}
, we conclude that a smooth solution $u(x,t)$ to \eqref{eq parabolic sum hessian-1} exists for all $t\geq 0$. 

By \cite[Lemma 10.1]{SK-poincare-2003}, the solution $u(x,t)$ converges smoothly as $t\to \wq$ to a limit $u^\wq$, which solves the Neumann problem \eqref{eq convergent elliptic Equation}. Moreover, if \eqref{condition c_f} holds, the decay estimate \eqref{eq parabolic u_t e^{r t}} implies $|u_t(x,t)|\leq Ce^{-c_ft}$, yielding exponential convergence. The uniqueness follows directly from the maximum principle and Hopf's lemma.

\section{Proof of Theorem \ref{thm 1.6-classical parabolic}}\label{sec-classical parabolic}
\setcounter{equation}{0}

In this section, we prove Theorem \ref{thm 1.6-classical parabolic}, adapting techniques from \cite{chen-ma-zhang-2021,Ma-Wang-Wei-2018-JFA,Schnurer-Schwetlick-PJM-2004}.

\subsection{An elliptic Neumann problem}

We begin by considering the following elliptic Neumann problem, which plays a crucial role in the proof of Theorem \ref{thm 1.6-classical parabolic}.

\begin{theorem}\label{thm 7.1}
    Let $\ooo\subset \rr^n$ be a smooth uniformly convex domain. Suppose $u_0$ is defined in Theorem $\ref{thm 1.6-classical parabolic}$ and $f,\pp \in C^{\wq}(\overline{\ooo})$ satisfies $f>0$. Then there exists a unique $s\in \rr$ and a $(k-1)$-admissible function $u \in C^\wq(\overline{\ooo})$ solving the Neumann problem
    \begin{equation}\label{eq 7.1}
        \left\{\begin{aligned}
            & S_k(D^2 u) =f(x)e^s && \te{in } \ooo, \\
            & u_\nu=\pp(x) && \te{on }\pa \ooo.
        \end{aligned}\right.
    \end{equation}
    Moreover, the solution $u_{\operatorname{ell}}^\wq$ is unique up to a constant.
\end{theorem}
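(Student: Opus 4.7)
The plan is to follow the perturbation and pass-to-the-limit strategy used for Theorem~\ref{thm 1.3-classical Neumann for elliptic sum}, adapted so that the unknown constant $s$ appears inside the right-hand side of the equation rather than on the boundary. For small $\es>0$ I would consider the perturbed Neumann problem
\[
S_k(D^2 u_\es)=f(x)\,e^{-\es u_\es}\ \te{in }\ooo,\qquad (u_\es)_\nu=\pp(x)\ \te{on }\pa\ooo,
\]
so that in the limit $\es\to 0$ the quantity $-\es u_\es$ can be expected to converge to a constant $s$. For each fixed $\es>0$ the right-hand side satisfies $\pa_u[f(x)e^{-\es u}]<0$, giving the correct sign for an interior maximum principle. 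A $(k-1)$-admissible solution $u_\es\in C^{3,\ga}(\overline{\ooo})$ is then produced by the method of continuity; since $\pp$ is independent of $u$, the hypothesis $\pp_u\le c_\pp<0$ of Theorem~\ref{thm 1.2-Neumann for elliptic sum} fails, and one must either first introduce an auxiliary boundary perturbation $(u_\es)_\nu=-\delta u_\es+\pp(x)$ and send $\delta\to 0$, or run a Leray--Schauder degree argument based on the a priori estimates of Sections~\ref{sec 3-C^0 and C^1}--\ref{sec 4-second derivative} applied to $f=f(x,u)$ with $f_u\le 0$. This is the main obstacle of the proof.

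A full $C^0$ bound on $u_\es$ itself is neither available nor needed. Setting $v_\es:=u_\es-\overline{u_\es}$ with $\overline{u_\es}:=\frac{1}{|\ooo|}\int_{\ooo}u_\es\,dx$, uniform convexity of $\ooo$ together with the argument of \cite[Proposition~5]{Qiu-Xia-IMRN-2019} (as already used in Theorem~\ref{thm 1.3-classical Neumann for elliptic sum}) bounds $|v_\es|_{C^1(\overline{\ooo})}$ independently of $\es$ and $|u_\es|_{C^0}$; Poincar\'e's inequality then bounds $|v_\es|_{C^0}$, so $f(x)e^{-\es v_\es}$ stays two-sided bounded and Theorem~\ref{thm elliptic C^2 estimates} yields a uniform $C^2$ estimate. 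Integrating the equation and using this $C^2$ bound,
\[
e^{-\es\overline{u_\es}}\int_{\ooo}f(x)\,e^{-\es v_\es}\,dx=\int_{\ooo}S_k(D^2 v_\es)\,dx,
\]
whose right-hand side lies between positive constants, forces $|\es\overline{u_\es}|\le C$. Schauder theory then lets me extract a subsequence along which $v_\es\to u_{\operatorname{ell}}^\wq$ in $C^{2,\ga}(\overline{\ooo})$ and $-\es\overline{u_\es}\to s$; standard elliptic regularity upgrades $(s,u_{\operatorname{ell}}^\wq)$ to a smooth $(k-1)$-admissible solution of \eqref{eq 7.1}.

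Uniqueness follows from the concavity of $S_k^{1/k}$ on $\tilde{\gaa}_k$ stated in Lemma~\ref{lem 2.2}$(b)$. If $(s_1,u_1)$ and $(s_2,u_2)$ both solve \eqref{eq 7.1} with $s_1<s_2$, concavity gives
\[
\frac{1}{k}S_k(D^2 u_1)^{\frac{1}{k}-1}\sum_{i,j}S_k^{ij}(D^2 u_1)(u_2-u_1)_{ij}\ge f(x)^{1/k}\bigl(e^{s_2/k}-e^{s_1/k}\bigr)>0,
\]
so $w:=u_1-u_2$ is a strict supersolution of the elliptic operator $L:=\sum_{i,j}S_k^{ij}(D^2 u_1)\pa_{ij}$ with $w_\nu=0$ on $\pa\ooo$. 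The strong maximum principle forces $w$ either to be constant or to attain its minimum on $\pa\ooo$; in the former case $Lw=0$ contradicts $Lw<0$, and in the latter Hopf's lemma yields $w_\nu<0$ strictly, contradicting $w_\nu=0$. Hence $s_1=s_2$, and the same argument with $s_1=s_2$ shows that $u_1-u_2$ is constant, completing the uniqueness statement.
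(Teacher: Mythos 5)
Your uniqueness argument is fine and is essentially what the paper does, but the existence half of your proposal has a genuine gap, and it starts with the sign of the perturbation. For an equation of the form $S_k(D^2u)=\psi(x,u)$ the comparison principle and the solvability of the linearized Neumann problem require $\psi$ to be \emph{nondecreasing} in $u$ (so that $S_k(D^2u)-\psi(x,u)$ is nonincreasing in $u$); compare the hypothesis $f_u\ge 0$ in \eqref{condition f>0}. Your choice $\psi(x,u)=f(x)e^{-\es u}$ has $\psi_u<0$, so the claim that this is ``the correct sign for an interior maximum principle'' is backwards: with $\psi_u<0$ in the interior and $\pp_u=0$ on the boundary there is no monotonicity anywhere in the problem, no comparison or sub/supersolution mechanism for a $C^0$-type bound, and the linearization $S_k^{ij}\pa_{ij}+\es f e^{-\es u}$ with homogeneous Neumann condition has a zeroth-order coefficient of the wrong sign, so openness in the method of continuity (or the degree argument you sketch) cannot be obtained from the maximum principle. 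You yourself label the existence of $u_\es$ ``the main obstacle'' and leave it unresolved; that obstacle is exactly what the paper removes by perturbing with the opposite sign. The paper solves $S_k(D^2u)=f(x)e^{s+\es u}$, $u_\nu=\pp$, notes $u_{\es,s}=u_{\es,0}-s/\es$, shows $u_0\pm M/\es$ are super/subsolutions, and then chooses $s_\es\in(-M,M)$ by the normalization $u_{\es,s_\es}(y_0)=u_0(y_0)$, which gives $|\es u_{\es,s_\es}|\le 3M$ at the outset; a gradient bound independent of $|u|_{C^0}$ is then proved directly with $G=\log|Dw|^2+\al_0 h$, $w=u-\pp h$, $h$ a defining function of the uniformly convex domain, after which the $C^0$, $C^2$ and Schauder estimates and the limit $\es\to0$ go through.

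Even granting existence of your $u_\es$, the estimate scheme is circular. Writing the equation for $v_\es=u_\es-\overline{u_\es}$ gives right-hand side $f(x)e^{-\es\overline{u_\es}}e^{-\es v_\es}$, and the gradient and second-order estimates you invoke (Theorems \ref{thm elliptic interior gradient}, \ref{thm elliptic near boundary gradient}, \ref{thm elliptic C^2 estimates}) have constants depending on $\inf$ and $C^1$/$C^2$ norms of this right-hand side, hence on a bound for $\es\overline{u_\es}$ --- which you only produce \emph{afterwards} from the integral identity; moreover that identity yields only a one-sided bound, since without a prior upper bound on $\es u_\es$ you have no positive lower bound on $\int_\ooo S_k(D^2v_\es)\,dx$. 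Also note that the Qiu--Xia gradient argument as used in the proof of Theorem \ref{thm 1.3-classical Neumann for elliptic sum} is for a right-hand side independent of $u$, so it does not apply verbatim here. In short: the uniqueness paragraph can stand, but the construction of the approximate solutions and their uniform estimates must be redone along the paper's lines (perturbation $e^{s+\es u}$, normalization through the extra parameter $s$, and a gradient bound independent of $|u|_{C^0}$) before the limiting argument makes sense.
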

\begin{proof}
    For any fixed $\es\in (0,1)$, we first prove the existence of a unique $(k-1)$-admissible solution $u_{\es,s}$ to the following perturbed equation:
    \begin{equation}\label{eq *}\tag{$*_{\es,s}$}
        \left\{\begin{aligned} 
            & S_k(D^2 u) = f(x)e^{s+\es u} && \te{in } \Omega,  \\
            & u_\nu =\pp(x) && \te{in } \partial \Omega.  \\
         \end{aligned}\right.
    \end{equation}
    Indeed, $u_{\es,s}(x)=u_{\es,0}(x)-\frac{s}{\es}$, which implies $u_{\es,s}$ is strictly decreasing in $s$.
    
    In the following, we will prove that for each $\es\in (0,1)$, there exists a unique, uniformly bounded constant $s_\es$ such that $|u_{\es,s_\es}|_{C^k(\overline{\ooo})}$ is uniformly bounded. These uniform bounds enable us to extract a subsequence converging to a solution of  \eqref{eq 7.1}.
    
    \textbf{Step 1:} For sufficiently large $M$, we claim that 
    $$ 
    u_\es^+ = u_0+\frac{M}{\es}\quad \te{and} \quad u_\es^-=u_0-\frac{M}{\es}
    $$ 
    are a supersolution and subsolution of \eqref{eq *}, respectively. That is, $u_\es^- \leq u_{\es, 0}\leq u_\es^+$. 
    
    Indeed, by choosing $M=|u_0|+|\log \inf f|+|\log S_k(D^2 u_0)|+1$, we have
    \begin{align*}\label{eq a^{ij}-c-I}
        a^{ij}(u_{\es ,0}-u_\es^+)_{ij}-c(x)(u_{\es ,0}-u_\es^+) 
        = &S_k(D^2 u_{\es ,0} )e^{-\es u_{\es ,0} } -S_k(D^2 u_\es^+)e^{-\es u_\es^+} \notag \\
        =& f(x) -S_k (D^2 u_0)e^{-M-\es u_0} >0,
    \end{align*}
    where $a^{ij}=\int_0^1 S_k^{ij}({D^2 u^t})e^{-\es u^t} dt$ is positive definite, $c(x)=\es \int_0^1 S_k(D^2 u^t) e^{-\es u^t} dt>0$ and $u^t:=tu_{\es,0}+(1-t) u_\es^+$. Additionally, $(u_{\es,0}-u_\es^+)_\nu=0$ on $\pa \ooo$.

    The maximum principle and Hopf's lemma imply that $u_{\es ,0}<u_\es^+$ on $\overline{\ooo}$. Similarly, $u_{\es,0}>u_\es^-$ on $\overline{\ooo}$. Thus we have $u_{\es, M}< u_0< u_{\es, -M}$ on $\overline{\ooo}$.  
    By strictly decreasing property of $u_{\es, s}$ in $s$, for each $\es\in(0,1)$, we can find a unique $s_\es\in (-M, M)$ such that $u_{\es, s_{\es}}(y_0) =u_0(y_0)$ for a fixed $y_0\in \ooo$. In particular, $|\es u_{\es,s_{\es}}|_{C^0(\overline\Omega)} \leq 2M + \es |u_0|_{C^0(\overline\Omega)} \leq 3M$.

    \textbf{Step 2:} 
    We prove that for any $\es\in (0,1)$, we have $|Du_{\es,s_{\es}}|_{C^0(\overline{\ooo})}\leq C_0$, 
    where $C_0$ is a positive constant independent of $\es$ and $|u_{\es,s_\es}|_{C^0}$. 
    We continue to adopt the notation for $\widetilde{F}$ introduced in Section \ref{sec-parabolic}. Since $\ooo$ is uniformly convex, there exists a defining function $h$ for $\ooo$ satisfying 
    \begin{equation}\label{eq defining function}
        \left\{\begin{aligned}
            & h=0, \quad Dh=-Dd=\nu \quad \te{on } \pa \ooo, \\
            & h<0,\quad|h|_{C^3} \leq C(n,\ooo), \quad 0<\tilde{\ka}_1 I\leq D^2 h\leq \tilde{\ka}_2 I \quad \te{in } \ooo.
        \end{aligned}\right.
    \end{equation}

    Let $\al_0>0$, and define $w=u-\pp h$ (where $u=u_{\es,s_\es}$) and the auxiliary function
    \begin{align*}
        G = \log|Dw|^2 +\al_0h.
    \end{align*}
    Suppose $G$ attains its maximum at $x_0\in \overline{\ooo}$. The proof of the global gradient estimate is divided into two cases, with all subsequent calculations evaluated at $x_0$.

    \noindent\textbf{Case 1.} $x_0\in \ooo$. Without loss of generality, we may assume that at the point $x_0$, $|Du|$ is sufficiently large and $D^2 u$ is diagonal. Hence $|Dw|$ is also large and comparable to $|Du|$. 
    
    Note that $|\es u_{\es,s_\es}|_{C^0}\leq 3M$. Using Lemma \ref{lem 2.3}, we have 
    \begin{align}\label{eq sum_i widetilde{F}^{ii} geq tilde{c}_3>0}
        \sum_i \widetilde{F}^{ii}(D^2 u_{\es,s_\es})\geq \frac{c_0(n,k,\inf S_k)}{S_k(D^2 u_{\es,s_\es})}:=\tilde{c}_3(n,k,M,\inf f,|f|_{C^0})>0.
    \end{align}
    By direct calculation, we derive $0=G_i=\frac{2w_l w_{li}}{|Dw|^2}+\al_0 h_i$ and 
    \begin{align}
        0\geq \widetilde{F}^{ii}G_{ii} 
        = & 2\widetilde{F}^{ii}\frac{w_{ii}^2}{|Dw|^2}+2\widetilde{F}^{ii}\frac{w_l w_{lii}}{|Dw|^2}-\al_0^2 \widetilde{F}^{ii}h_i^2 +\al_0 \widetilde{F}^{ii}h_{ii} \nonumber\\
        \geq & \al_0(\tilde{\ka}_1-\al_0 C_1 )\sum_i \widetilde{F}^{ii}+2\widetilde{F}^{ii}\frac{w_l w_{lii}}{|Dw|^2}.\label{eq thm 7.1 0 geq widetilde{F}^{ii}G_{ii}-first}
    \end{align}
    We now deal with the last term. Observe that
    \begin{align*}
        w_l \widetilde{F}^{ii}w_{lii}=& [u_l-(\pp h)_l]\widetilde{F}^{ii}[u_{lii}-(\pp h)_{lii}] 
        \geq  -C_2|Du|-C_2 |Du|\sum_i \widetilde{F}^{ii}.
    \end{align*}
    By choosing $\al_0>0$ sufficiently small and using \eqref{eq sum_i widetilde{F}^{ii} geq tilde{c}_3>0}, inequality \eqref{eq thm 7.1 0 geq widetilde{F}^{ii}G_{ii}-first} simplifies to
    \begin{align*}
        0\geq \widetilde{F}^{ii}G_{ii} 
        \geq & \z[ \al_0\z(\tilde{\ka}_1-\al_0 C_1\x)-\frac{C_3}{|Dw|}\x]\sum_i \widetilde{F}^{ii} \geq \frac{\tilde{c}_3 \al_0 \tilde{\ka}_1}{2}-\frac{C_4}{|Dw|}.
    \end{align*}
    Therefore, we establish a bound for $|Dw(x_0)|$, which implies $\sup_{\overline{\ooo}}|Du_{\es,s_{\es}}|\leq C_5$.

    \noindent\textbf{Case 2.} $x_0 \in \pa\ooo$. Then
    \begin{align}\label{eq thm 7.1 0 leq G_nu}
        0\leq G_\nu=G_i \nu^i=\frac{2w_l w_{li}}{|Dw|^2}\nu^i+\al_0 D_\nu h =\frac{2w_l w_{li}}{|Dw|^2}\nu^i+\al_0.
    \end{align}
    Differentiating the boundary condition $u_i \nu^i=u_\nu(x)=\pp(x)$, we have $u_{il}\nu^i +u_i {\nu^i}_l=\pp_l$. On the other hand, since $w=u-\pp h$, at the point $x_0\in \pa\ooo$, we have
    \begin{align*}
        w_l& =u_l-(\pp h)_l=u_l -\pp h_l, \\
        w_{li}&=u_{li}-(\pp h)_{li}
        =u_{li} -\pp_l h_i -\pp_i h_l -\pp h_{li},\\
        w_{li} \nu^i  & 
        = (u_{li}\nu^i-\pp_l)-\pp_\nu h_l -\pp h_{li}\nu^i  
        =-u_i {\nu^i}_l-\pp_\nu h_l -\pp h_{li}\nu^i.
    \end{align*}
    Bt the uniformly convexity of $\ooo$, it follows that
    \begin{align*}
        w_l w_{li}\nu^i=& -u_i u_l {\nu^i}_l -u_l(\pp_\nu h_l +\pp h_{li}\nu^i)+\pp h_l(u_i {\nu^i}_l+\pp_\nu h_l+\pp h_{li}\nu^i)\\
        \leq & -\ka_{\operatorname{min}} (|Du|^2-|D_\nu u|^2) + C_6(|Du|+1)\leq -\ka_{\operatorname{min}} |Du|^2+C_7(|Du|+1),
    \end{align*}
    where $\ka_{\operatorname{min}}$ is the minimum principal curvature of $\pa \ooo$.
    Consequently, \eqref{eq thm 7.1 0 leq G_nu} becomes
    \begin{align*}
        0\leq \al_0 -\frac{\ka_{\operatorname{min}}}{2}+\frac{C_8}{|Dw|}.
    \end{align*}
    Choosing $\al_0=\frac{\ka_{\operatorname{min}}}{4}$, we deduce that $|Du_{\es,s_\es}|\leq C_9$. This establishes the global gradient estimate for $u_{\es,s_\es}$, and the bound is independent of $\es$, and $|u_{\es,s_\es}|_{C^0}$.
    
    \textbf{Step 3:} We now prove the uniform $C^0$ bounds for $u_{\es,s_\es}$. By construction, $u_{\es, s_\es}(y_0)=u_0(y_0)$, and thus for any $x\in \ooo$,
    \begin{align*}
        |u_{\es, s_\es} (x)|\leq  |u_{\es, s_\es}(y_0)| + |Du_{\es, s_\es}|_{C^0}|x-y_0| \leq C_{10}.
    \end{align*}

    Moreover, the arguments in Section \ref{sec 4-second derivative} yield a global second-order derivatives estimate $|D^2 u_{\es,s_\es}|\leq C_{11}$. A crucial observation in the proof of Lemma \ref{lem elliptic global C^2 reduce to boundary} is that the uniform convexity of the domain yields $\pp_u-2 {\nu^1}_1\leq -2\ka_{\operatorname{min}}<0$.

    By passing to a subsequence (still denoted by $\es \to 0$), we obtain $s_{\es} \to s^\infty$ and $u_{\es, s_{\es}}$ converges to a $(k-1)$-convex function $u_{\operatorname{ell}}^\infty$ that satisfies equation \eqref{eq 7.1} with $s = s^\infty$. The higher-order regularity follows from the estimates in \cite{Lieberman-Trudinger-1986-TAMS}, while the uniqueness of $s^\wq$ is a consequence of the maximum principle and Hopf's lemma.
\end{proof}

\subsection{Proof of Theorem \ref{thm 1.6-classical parabolic}}

To prove Theorem \ref{thm 1.6-classical parabolic}, we need to establish the a priori estimates for equation \eqref{eq parabolic sum hessian-2}.

\begin{proof}
    Assume $u$ is the $(k-1)$-admissible solution of \eqref{eq parabolic sum hessian-2}.

    $(1)$ $u_t$-estimate. By Lemma \ref{lem parabolic u_t estimate} $(1)$, we obtain
    $$
    |u_{t}| \leq  |u_{t}(\cdot,0)|_{C^0(\overline{\ooo})}\leq C_1(n,k,|u_0|_{C^0},\inf f,|f|_{C^0}), \quad\te{ in } \overline{\ooo}\times[0,T).
    $$

    $(2)$ $|Du|$-estimate. For any $T_0\in (0,T)$. We consider the auxiliary function
    $$
    G(x,t) = \log |Dw|^2 +\al_0 h, \quad \te{in } \overline{\ooo} \times [0,T_0],
    $$
    where $\al_0>0$, $w=u-\pp d$ and $h$ is the defining function for $\ooo$ satisfying \eqref{eq defining function}. 
    
    Suppose $G$ attains its maximum over $\overline{\ooo}\times[0,T_0]$ at $(x_0,t_0)$. If $t_0=0$, the global gradient estimate follows directly. Now assume  $t_0>0$, we divide the proof into two cases, with all subsequent calculations are evaluated at $(x_0, t_0)$.

    \textbf{Case 1.} $x_0\in \ooo$. Without loss of generality, we may assume at $(x_0,t_0)$, $|Du|$ is sufficiently large and $D^2 u$ is diagonal. Then $|Dw|$ is also large and comparable to $|Du|$ at $(x_0,t_0)$. Since $S_k(D^2 u) = f(x) e^{u_t}$, by Lemma 3.3, we have
    $$
    \sum_i F^{ii} \geq \frac{c_0(n,k,\inf S_k)}{S_k} = \tilde{c}_4(n,k,|u_0|_{C^0}, \inf f, |f|_{C^0} ) > 0.
    $$

    Through direct calculation, at the point $(x_0, t_0)$, we have
    \begin{align*}
        0 = G_i = 2\frac{ w_l w_{li}}{|Dw|^2} + \al_0 h_i, \quad 0\leq G_t =2 \frac{w_l w_{lt}}{|Dw|^2},
    \end{align*}
    and
    \begin{align*}
        0 \geq& F^{ii} G_{ii}-G_t =  \frac{2}{|Dw|^2} w_l (\widetilde{F}^{ii}w_{lii} -w_{lt}) + \frac{2}{|Dw|^2} \widetilde{F}^{ii} w_{li}^2 + \al_0 F^{ii} h_{ii} - \al_0^2 \widetilde{F}^{ii} h_i^2 \\
        \geq & \al_0(\tilde{\ka}_1-\al_0 C_2) \sum_i \widetilde{F}^{ii} + \frac{2}{|Dw|^2} [u_l-(\pp h)_l][(\widetilde{F}^{ii}u_{lii}-u_{lt})-\widetilde{F}^{ii}(\pp h)_{lii}]\\
        \geq& \frac{\al_0 }{2}\ka_1 \tilde{c}_4-\frac{C_3}{|Dw|}. 
    \end{align*}
    where $\al_0$ is a sufficiently small constant. Thus, $|Dw|\leq C_4$, which implies that $|Du(x,t)|\leq C_5$ for any $(x,t)\in \overline{\ooo}\times[0,T_0]$. Moreover, $C_5$ is independent of $|u|_{C^0}$ and $T_0$.

    \textbf{Case 2.} $x_0\in \pa \ooo$. Following the arguments in Theorem \ref{thm 7.1}, we obtain $|Du| \leq C_6$.

    $(3)$ $|D^2 u|$-estimate. Following the arguments in Section \ref{sec 4-second derivative}, we derive $\sup_{\overline{\ooo}\times[0,T)}|D^2 u|\leq C_7$, where $C_7$ is independent of $|u|_{C^0}$.

    $(4)$ $C^0$-estimate. Let $u^\wq(x,t)=u_{\operatorname{ell}}^\wq(x)+s^\wq t$ and $w=u-u^\wq$, where $(s^\wq,u_{\operatorname{ell}}^\wq)$ is a solution to \eqref{eq 7.1}. Then $u^\wq$ and $w$ satisfies 
    \begin{equation*}
        \left\{\begin{aligned}
            & u^\wq_t=\log S_k(D^2 u^\wq)-\log f(x) && (x,t)\in \ooo\times\rr,\\
            & u^\wq_\nu=\pp(x) && (x,t)\in \pa \ooo \times\rr, \\
            & u^\wq(x,0)=u_{\operatorname{ell}}^\wq(x) && x\in \ooo,
        \end{aligned}\right. 
    \end{equation*} 
    and 
    \begin{equation*}
        \left\{\begin{aligned}
            & w_t=a^{ij}w_{ij} && (x,t)\in \ooo\times\rr,\\
            & w_\nu=0 && (x,t)\in \pa \ooo \times\rr, \\
            & w(x,0)=u_0(x)-u_{\operatorname{ell}}^\wq(x) && x\in \ooo,
        \end{aligned}\right. 
    \end{equation*} 
    where $a^{ij}=\int_{0}^1 S_k^{ij}(tD^2u+(1-t)D^2 u^\wq)dt$ is positive definite. 
    By the parabolic strong maximum principle and Hopf's lemma, the 
    extrema of $w$ on $\overline{\ooo}\times[0,\wq)$ must occur at $t=0$. 
    Therefore, 
    $$
    \inf_{\ooo} w(\cdot,0)<w(x,t)<\sup_{\ooo} w(\cdot,0), \quad \te{for any } (x,t) \in \ooo \times(0,\wq).
    $$
    That is, $s^\wq t-|u_0|_{C^0}-2|u_{\operatorname{ell}}^\wq|_{C^0}\leq u \leq s^\wq t+|u_0|_{C^0}+2|u_{\operatorname{ell}}^\wq|_{C^0}$ in $\overline{\ooo}\times[0,\wq)$.

    $(5)$ Smoothness, Long-time existence, and Convergence.
    
    The remainder of the proof follows the same arguments as in Subsection 8.3 of \cite{chen-ma-zhang-2021}. Consequently, we establish the smoothness of $u$ (with uniform bounds on all higher-order derivatives) and its long-time existence. As $t \to \wq$, the solution $u$ converges smoothly to a translating solution. Specifically, there exists a constant $a^\wq$, depending on $u_{\operatorname{ell}}^\wq$, such that
    \begin{equation*}
        \lim_{t \to \wq} |u(x,t)-u_{\operatorname{ell}}^\wq(x)-s^\wq t-a^\wq|_{C^l(\overline{\ooo})}=0, \quad\te{for any integer } l\geq 0,
    \end{equation*}
    where the pair $(s^\wq,u_{\operatorname{ell}}^\wq+a^\wq)$ also solves \eqref{eq 7.1}. 

    $(6)$ Uniqueness follows from the parabolic maximum principle and Hopf's lemma.
\end{proof}

The authors would like to thank Professor Xi-Nan Ma for the constant encouragement in this subject.

\section*{Conflict of interest}

The authors state no conflict of interest.

\section*{Data availability statement}

The data used to support the findings of this study are included within the article.

\end{document}